 \theoremstyle{definition}
 \theoremstyle{remark}
 \numberwithin{equation}{section}
\newtheorem{theorem}{Theorem}
\newtheorem{definition}[theorem]{Definition}
\newtheorem{lemma}[theorem]{Lemma}
\newtheorem{proposition}[theorem]{Proposition}
\newcommand{\dis}{\displaystyle}
\newcommand{\divv}{\text{\rm div}}
\newcommand{\dt}{\partial_t}
\newcommand{\R}{\mathbb R}
\newcommand{\ds}{\displaystyle}
\newcommand{\intox}{\int_{\R^3}}
\newcommand{\intoxt}{\int_0^t\int_{\R^3}}
\newcommand{\Int}{\int_0^t\int_{\R^3}}
\numberwithin{equation}{section}
\numberwithin{theorem}{section}
\numberwithin{figure}{section}
\begin{document}

%
%
%
%
%
%
%
%
%








\title[Navier-Stokes-Poisson Equations with finite energy]
 {Existence and a blow-up criterion of solution to the 3D compressible Navier-Stokes-Poisson equations with finite energy}
 
\author{Anthony Suen} 

\address{Department of Mathematics and Information Technology\\
The Education University of Hong Kong}

\email{acksuen@eduhk.hk}

\date{June 30, 2019}

\keywords{Navier-Stokes-Poisson equations; compressible flow; blow-up criteria}

\subjclass[2000]{35Q30} 

\begin{abstract}
We study the low-energy solutions to the 3D compressible Navier-Stokes-Poisson equations. We first obtain the existence of smooth solutions with small $L^2$-norm and essentially bounded densities. No smallness assumption is imposed on the $H^4$-norm of the initial data. Using a compactness argument, we further obtain the existence of weak solutions which may have discontinuities across some hypersurfaces in $\R^3$. We also provide a blow-up criterion of solutions in terms of the $L^\infty$-norm of density.
\end{abstract}

\maketitle
\section{Introduction}\label{Introduction section}

In this present paper, we consider the following isentropic compressible Naiver-Stokes-Poisson (NSP) equations in the whole space $\R^3$ ($j=1,2,3$):
\begin{align}
\label{Navier Stokes Poisson} \left\{ \begin{array}{l}
\rho_t + \divv (\rho u) =0, \\
(\rho u^j)_t + \divv (\rho u^j u) + (P)_{x_j} = \mu\,\Delta u^j +
\lambda \, (\divv \,u)_{x_j}  + \rho \phi_{x_j},\\
\Delta\phi=\rho-\tilde\rho,
\end{array}\right.
\end{align}
Here $x\in\R^3$ is the spatial coordinate and $t\ge0$ stands for the time. The unknown functions $\rho=\rho(x,t)$, $u=(u^1,u^2,u^3)(x,t)$ and $\phi=\phi(x,t)$ represent the electron density, electron velocity and the electrostatic potential respectively. $P=P(\rho)$ is a function in $\rho$ which denotes the pressure and $\tilde\rho>0$ is a fixed constant, and $\mu$, $\lambda$ are positive viscosity constants. The system \eqref{Navier Stokes Poisson} is equipped with initial condition
\begin{equation}\label{initial condition}
(\rho(\cdot,0)-\tilde\rho,u(\cdot,0),\phi(\cdot,0)) = (\rho_0-\tilde\rho,u_0,\phi_0)
\end{equation}
with the compatibility condition on $\rho_0$, namely
\begin{equation}\label{compatibility condition}
\intox(\rho_0-\tilde\rho)=0.
\end{equation}

The NSP system \eqref{Navier Stokes Poisson} was used for describing the dynamics of a compressible fluid of electron in which the fluid interacts with its own electric field under the influence of a charged ion background at a given temperature. Equations \eqref{Navier Stokes Poisson}$_1$ and \eqref{Navier Stokes Poisson}$_2$ give the conservation of charge and conservation of momentum respectively, while equation \eqref{Navier Stokes Poisson}$_3$ is the self-consistent Poisson equation which relates the electron density and electrostatic potential. We refer to \cite{CG00}, \cite{Degond00}, \cite{D03}, \cite{DT08} for more detailed discussions. 

The system \eqref{Navier Stokes Poisson}-\eqref{initial condition} has been studied by various mathematicians and we first recall some known results from the literature. On the one hand, Li-Matsumura-Zhang \cite{lmz10} obtained the global existence of small-smooth type solutions using the method by Matsumura-Nishida \cite{mn79}-\cite{mn80} with smallness assumptions on the initial data. The authors in \cite{lmz10} further proved that the density $\rho$ converges to its equilibrium state in $L^2$ and $L^\infty$-norm with optimal rates of convergence. On the other hand, the global existence of large-weak type solutions of \eqref{Navier Stokes Poisson}-\eqref{initial condition} with large initial data was proved in Donatelli \cite{D03} and Zhang-Tan \cite{ZT07} using the theory of P. L. Lions \cite{lions98}.

In this present work, we try to deepen our understanding on the compressible NSP system by addressing the solutions to \eqref{Navier Stokes Poisson} from another new perspective, in the sense that the initial data \eqref{initial condition} is assumed to be small in some weaker norms ($L^2$) with nonnegative and essentially bounded initial densities, and {\it no} further smallness assumption is imposed on the higher-regularity norms of the initial data. Such idea was first initiated by Hoff \cite{hoff95}-\cite{hoff06} in studying compressible Navier-Stokes system which was later extended by Suen \cite{suen13}-\cite{suen14} for compressible Naiver-Stokes system with potential forces as well as by Suen-Hoff \cite{sh12} for compressible magnetohydrodynamics (MHD). The weak solutions obtained in the present work are known as the {\it intermediate} weak solutions which enjoy the following properties:
\begin{itemize}
\item The density $\rho$ and velocity gradient $\nabla u$ may exhibit discontinuities across some hypersurfaces in $\R^3$, such phenomenon cannot be observed by those small-smooth type classical solutions.
\item These intermediate weak solutions would have more regularity than the large-weak type solutions developed by Lions \cite{lions98}, so that the uniqueness and continuous dependence of solutions may still be obtained (see \cite{hoff06} for the case without external force). 
\end{itemize}
Furthermore, as a by-product of our analysis, we provide a blow-up criterion of smooth solution to \eqref{Navier Stokes Poisson}-\eqref{initial condition} in terms of density. Such result is parallel to those obtained in Navier-Stokes system \cite{swz11} as well as for compressible MHD system \cite{suen13a}. In the present work, we allow vacuum in the initial density $\rho_0$ and there is {\it no} smallness assumption imposed on the initial data in obtaining such blow-up criterion.

The main novelties of this current work can be summarised as follows:

\noindent{1.} We generalise the results obtained in \cite{lmz10} in the way that we obtain the existence of classical solution to \eqref{Navier Stokes Poisson}-\eqref{initial condition} {\it without} smallness assumption on the $H^4$ of the initial data. 

\noindent{2.} We prove the existence of intermediate weak solutions to \eqref{Navier Stokes Poisson}-\eqref{initial condition} which can be viewed as an extensions from those in \cite{hoff95}-\cite{hoff06} for compressible Navier-Stokes. 

\noindent{3.} We obtain a blow-up criterion for \eqref{Navier Stokes Poisson}-\eqref{initial condition} in terms of the $L^\infty$-norm on the density. Such result is parallel to the one for the compressible Navier-Stokes system as given in \cite{swz11}. 

\medskip

We provide a brief outline on the analysis and idea behind our work. We introduce the following auxiliary variable associated with the system \eqref{Navier Stokes Poisson} which is known as the {\it effective viscous flux} $F$. It is given by 
\begin{equation}\label{def of effective viscous flux}
F=(\mu+\lambda)\divv(u)-(P(\rho)-P(\tilde\rho)).
\end{equation}
The variable $F$ has been studied extensively by Hoff in \cite{hoff95}-\cite{hoff06}, and we refer to those references for more detailed discussions on $F$. Upon rearranging terms, the momentum equation \eqref{Navier Stokes Poisson}$_2$ can be rewritten in terms of $F$:
\begin{equation}\label{momentum eqn rewritten}
\rho\dot u^j=F_{x_j}+\mu\omega^{j,k}_{x_k}-\rho\phi_{x_j}.
\end{equation}
Differentiate \eqref{momentum eqn rewritten} with respect to $x_j$ and sum over $j$, we obtain the following equation for $F$:
\begin{equation}\label{Poisson eqn of F}
\Delta (F)=\divv(\rho\dot u-\rho\nabla\phi).
\end{equation}
Equations \eqref{momentum eqn rewritten}-\eqref{Poisson eqn of F} will be crucial in obtaining {\it a priori} bounds on the solutions, and the importance can be explained heuristically as follows:

\noindent{1.} In estimating higher-regularity norms (for example $\int_{0}^{1}\!\!\!\int_{\R^3}t^3|\nabla u|^4dxdt$) of the weak solutions, one {\it cannot} merely applying the embedding $W^{2,2}\hookrightarrow W^{1,4}$ as $\nabla u$ may be discontinuous across hypersrufaces of $\R^3$ (see for example \cite{hoff95} for more details). With the help of $F$ and $\omega$, we are able to observe the following decomposition of $\Delta u$:
\begin{equation*}
u^j_{x_k x_k}=\omega^{j,k}_{x_j}+\left[(\mu+\lambda)^{-1}F\right]_{x_j}+\left[(\mu+\lambda)^{-1}(P-P(\tilde\rho))\right]_{x_j}.
\end{equation*}
If we anticipate that $F(\cdot,t),\omega(\cdot,t)\in H^1$ and $P(\cdot,t)-P(\tilde\rho)\in L^2\cap L^\infty$, the term $u^j_{x_k x_k}$ should then be in $W^{-1,4}$, and hence the desired bound for $\nabla u$ in $L^4$ follows by a Fourier-type multiplier theorem.

\noindent{2.} Another important application of the equations \eqref{momentum eqn rewritten}-\eqref{Poisson eqn of F} can be revealed in studying the pointwise bounds on the density. With the help of the effective viscous flux $F$, we can rewrite equation \eqref{Navier Stokes Poisson}$_1$ as follows:
\begin{align*}
(\mu+\lambda)\frac{d}{dt}[\log\rho(x(t),t)]+P(\rho(x(t),t))-P(\tilde\rho)=-\tilde\rho F(x(t),t),
\end{align*}
where $(x(t),t)$ is a particle path governed by $u$. Upon integrating with respect to time, we observe that the oscillation in density can be controlled by the time integral of $-\tilde\rho F(x(t),t)$. By utilizing the Poisson's equation \eqref{Poisson eqn of F} and the claimed {\it a priori} bounds on $F$, we are able to show that such time integral is bounded by the initial energy of the system which is taken to be small by our assumption. Hence the density remains bounded above in $L^\infty$ as compare to itself initially. 

\medskip

We now give a precise formulation of our results. We first define the system parameters $P$, $\mu$, $\lambda$ as follows. For the pressure function $P=P(\rho)$, we assume that
\begin{equation}\label{assumption on P}
\mbox{$P(\rho)\in C^2((0,\infty))$ with $P(0)=0$; $P(\rho)>0$, $P'(\rho)>0$ for $\rho>0$;}
\end{equation}
For the diffusion coefficients $\mu$ and $\lambda$, we assume that
\begin{equation}\label{assumption on viscosity}
\mbox{$\mu>0$ and $0<\lambda<\frac{5\mu}{4}$.}
\end{equation}
It follows that
\begin{equation}\label{assumption on viscosity in q}
\frac{\mu}{\lambda}>\frac{(q-2)^2}{4(q-1)}
\end{equation}
for $q=6$ and consequently for some $q>6$, which we now fix. We also remark that the above conditions \eqref{assumption on P}-\eqref{assumption on viscosity in q} as imposed on $P$, $\mu$ and $\lambda$ are consistent with those used by Li-Matsumura \cite{lm11} and Hoff \cite{hoff05} for compressible Navier-Stokes system. Condition \eqref{assumption on P} is considered to be more general than those used in Hoff \cite{hoff95} which includes the special case $P(\rho)=K\rho^\gamma$ for $\gamma\ge1$ and $K>0$. The assumptions \eqref{assumption on viscosity}-\eqref{assumption on viscosity in q} are required for technical reasons in obtaining {\it a priori} estimates and will be particularly used in proving Lemma~\ref{Lq bound on u lemma} (notice that \eqref{assumption on viscosity}-\eqref{assumption on viscosity in q} are consistent with those given in \cite{hoff95}).

Next we state the assumptions on the initial data $(\rho_0,u_0,\phi_0)$. We assume there is a positive number $N$, which may be arbitrarily large such that
\begin{equation}\label{Lq bound on initial data}
\|u_0\|_{L^{q}}\le N
\end{equation}
where $q$ is defined in \eqref{assumption on viscosity in q}. From now on, for $\rho_0-\tilde\rho, u_0,\nabla\phi_0\in L^2(\R^3)$, we also write
\hfill
\begin{equation}\label{def of L^2 initial data}
C_{0}=\|\rho_0-\tilde\rho\|^2_{L^2}+\|u_0\|^2_{L^{2}}+\|\nabla\phi_0\|^2_{L^2}
\end{equation}
for the sake of convenience without further referring.

Weak solutions to the system \eqref{Navier Stokes Poisson}-\eqref{initial condition} can be defined as follows. Given $T>0$, we say that $(\rho-\tilde\rho,u,\phi)$ is a {\it weak solution} of \eqref{Navier Stokes Poisson}-\eqref{initial condition} if 
\begin{itemize}
\item $(\rho-\tilde\rho,\,\rho u,\nabla\phi)\in C([0,T];H^{-1}(\R^3))$;
\item $(\rho-\tilde\rho,u,\phi)|_{t=0}=(\rho_0,u_0,\phi_0)$;
\item $\nabla u\in L^2(\R^3\times(0,T))$;
\end{itemize}
and the following integral identities \eqref{weak form of mass equation}-\eqref{weak form of poisson equation} hold for all $t_1,t_2\in[0,T]$ and $C^1$ test functions $\varphi$ having uniformly bounded support in $x$ for $t\in[t_1,t_2]$:
\begin{align}\label{weak form of mass equation}
\left.\int_{\R^3}\rho(x,\cdot)\varphi(x,\cdot)dx\right|_{t_1}^{t_2}=\int_{t_1}^{t_2}\!\!\!\int_{\R^3}(\rho\varphi_t + \rho u\cdot\nabla\varphi)dxdt;
\end{align}
\begin{align}\label{weak form of momentum equation}
\left.\int_{\R^3}(\rho u^{j})(x,\cdot)\varphi(x,\cdot)dx\right|_{t_1}^{t_2}=&\int_{t_1}^{t_2}\!\!\!\int_{\R^3}[\rho u^{j}\varphi_t + \rho u^{j}u\cdot\nabla\varphi + P(\rho)\varphi_{x_j}]dxdt\notag\\
& - \int_{t_1}^{t_2}\!\!\!\int_{\R^3}[\mu\nabla u^{j}\cdot\nabla\varphi + (\mu - \xi)(\divv\,u)\varphi_{x_j}]dxdt\\
&+ \int_{t_1}^{t_2}\!\!\!\int_{\R^3}\rho\phi_{x_j}\varphi dxdt;\notag
\end{align}
\begin{align}\label{weak form of poisson equation}
-\int_{t_1}^{t_2}\!\!\!\int_{\R^3}\nabla\phi\cdot\nabla\varphi dxdt=\int_{t_1}^{t_2}\!\!\!\int_{\R^3}(\rho-\tilde\rho)\varphi dxdt.
\end{align}

We adopt the following usual notations for H\"older seminorms: for $v:\R^3\to \R^m$ and $\alpha \in (0,1]$, 
\hfill
$$\langle v\rangle^\alpha = \sup_{{x_1,x_2\in 
\R^3}\atop{x_1\not=x_2}}
{{|v(x_2) -v(x_1)|}\over{|x_2-x_1|^\alpha}}\,;$$
and for $v:Q\subseteq\R^3 \times[0,\infty)\to \R^m$ and $\alpha_1,\alpha_2 \in (0,1]$,
\hfill
$$\langle v\rangle^{\alpha_1,\alpha_2}_{Q} = \sup_{{(x_1,t_1),(x_2,t_2)\in 
Q}\atop{(x_1,t_1)\not=(x_2,t_2)}}
{{|v(x_2,t_2) - v(x_1,t_1)|}\over{|x_2-x_1|^{\alpha_1} + |t_2-t_1|^{\alpha_2}}}\,.$$

We denote the material derivative of a given function $v$ by
$\dot{v}=v_t + \nabla v\cdot u$,
Finally if $I\subset [0,\infty)$ is an interval, $C^1(I;X)$ will be the elements $v\in C(I;X)$ such that the distribution derivative $v_t\in {\mathcal D}'(\R^3\times{\rm int}\,I)$ is an element of $C(I;X)$.

We make use of the following standard facts (refer to Ziemer \cite{ziemer89} for details). First, given $r\in[2,6]$ there is a constant $C(r)$ such that for $w\in H^1 (\R^3)$,
\begin{equation}\label{sobolev bound r}
\|w\|_{L^r} \le C(r) \left(\|w\|_{L^2}^{(6-r)/2r}\|\nabla w\|_{L^2}^{(3r-6)/2r}\right).
\end{equation}
Next, for any $r\in (3,\infty)$ there is a constant $C(r)$ such that for $w\in W^{1,r}(\R^3)$,
\begin{equation}\label{infty bound r}
\|w\|_{L^\infty} \le C(r) \|w\|_{W^{1,r}}
\end{equation}
and
\begin{equation}\label{holder bound r}
\langle w\rangle^\alpha_{\R^3}\le C(r)\|\nabla w\|_{L^r},
\end{equation}
where $\alpha=1-3/r$. If $\Gamma$ is the fundamental solution of the Laplace operator on $\R^3$, then there is a constant $C$ such that for any $g\in L^2(\R^3)\cap L^4(\R^3)$, 
\begin{equation}\label{bound on green function}
\|\Gamma_{x_j}*g\|_{L^\infty} \le C\left(\|g\|_{L^2} + \|g\|_{L^4}\right).
\end{equation}

\medskip

The following are the main results of this paper. First of all, Theorem~\ref{existence theorem} shows that given $T>0$, under a smallness assumption on the $L^2$-norm of the initial data, the smooth classical solution to \eqref{Navier Stokes Poisson} exists on $[0,T]$.
\begin{theorem}\label{existence theorem}
Let the system parameters $P$, $\mu$, $\lambda$ be given and satisfy the conditions \eqref{assumption on P}-\eqref{assumption on viscosity in q}. Given $N$, $d$, $\bar\rho$, $\tilde\rho>0$ and $q>6$, for each $T>0$, there exists $\delta_T>0$ such that if the initial data $(\rho_0-\tilde\rho,u_0,\nabla\phi_0)\in H^4$ satisfies \eqref{compatibility condition} and \eqref{Lq bound on initial data}-\eqref{def of L^2 initial data} and
\begin{equation}\label{smallness on initial energy depend on T}
C_0\le\delta_T
\end{equation}
\begin{equation}\label{pointwise bound on initial rho existence}
0\le {\rm ess}\inf\rho_0 \le {\rm ess}\sup\rho_0<\bar\rho-d,
\end{equation}
then the classical solution $(\rho-\tilde\rho,u,\phi)$ of \eqref{Navier Stokes Poisson}-\eqref{initial condition} exists on $\R^3\times[0,T]$.
\end{theorem}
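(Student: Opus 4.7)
The plan is local existence plus a continuation/bootstrap argument driven by a priori estimates. Because $(\rho_0-\tilde\rho,u_0,\nabla\phi_0)\in H^4$ with $\rho_0$ strictly below $\bar\rho-d$, a local classical solution exists on some maximal interval $[0,T_*)$ by a standard linearization/fixed-point scheme for the coupled system: the Poisson equation is decoupled by elliptic regularity and enters the momentum equation as a lower-order source in $L^2\cap L^\infty$. To continue this local solution to the prescribed time $T$, I set up the bootstrap
\[
T^\sharp=\sup\!\Bigl\{\tau\in[0,T]\cap[0,T_*):\ \|\rho(\cdot,s)\|_{L^\infty}\le\bar\rho-\tfrac{d}{2}\ \text{for all}\ s\in[0,\tau]\Bigr\},
\]
and aim to show that, under $C_0\le\delta_T$, the density bound strictly improves to $\|\rho\|_{L^\infty}\le\bar\rho-d$, which by continuation forces $T^\sharp=T$ and precludes blow-up.

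The first ingredient is the basic energy identity, obtained by testing \eqref{Navier Stokes Poisson}$_2$ against $u$. The term $\int \rho\nabla\phi\cdot u\,dx$, absent in the pure Navier--Stokes case, is handled by combining \eqref{Navier Stokes Poisson}$_1$ and \eqref{Navier Stokes Poisson}$_3$: a short computation gives $\int \rho\nabla\phi\cdot u\,dx=-\tfrac{d}{dt}\int\tfrac{1}{2}|\nabla\phi|^2dx$, so $\|\nabla\phi\|_{L^2}^2$ is absorbed into the total dissipated energy. This yields an a priori bound
\[
\sup_{[0,t]}\bigl(\|\sqrt{\rho}\,u\|_{L^2}^2+\|\nabla\phi\|_{L^2}^2\bigr)+\int_0^t\|\nabla u\|_{L^2}^2\,ds\le CC_0
\]
as long as the bootstrap density bound holds. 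A second energy estimate, obtained by differentiating the momentum equation in $t$ and testing against $\dot u$, then gives control of $\int_0^t\|\sqrt{\rho}\,\dot u\|_{L^2}^2\,ds$, with the new Poisson source controlled in $L^2$ by $\bar\rho\|\nabla\phi\|_{L^2}$, itself small with $C_0$.

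The decisive step is the pointwise upper bound on $\rho$. Using the effective viscous flux \eqref{def of effective viscous flux}, the continuity equation along a particle path $x(t)$ reduces to the ODE
\[
(\mu+\lambda)\tfrac{d}{dt}\log\rho(x(t),t)+P(\rho(x(t),t))-P(\tilde\rho)=-F(x(t),t).
\]
Integrating in $t$ and dropping the favorable pressure term yields $\log\rho(x(t),t)\le\log\rho_0+(\mu+\lambda)^{-1}\int_0^t\|F(\cdot,s)\|_{L^\infty}ds$. The sup-norm of $F$ is estimated via \eqref{bound on green function} applied to the Poisson representation \eqref{Poisson eqn of F} together with the $L^2$ bounds on $\rho\dot u$ and $\rho\nabla\phi$ from the previous step; one obtains $\int_0^T\|F\|_{L^\infty}ds\le\eta(T,C_0)$ with $\eta(T,C_0)\to0$ as $C_0\to0$. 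Choosing $\delta_T$ sufficiently small then forces $\|\rho(\cdot,t)\|_{L^\infty}\le\bar\rho-d$ uniformly on $[0,T]$, closing the bootstrap.

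Once the pointwise density bound is secured, propagation of $H^4$ regularity is essentially routine: an $L^q$ estimate on $\nabla u$ via the decomposition $u^j_{x_kx_k}=\omega^{j,k}_{x_j}+(\mu+\lambda)^{-1}(F+P(\rho)-P(\tilde\rho))_{x_j}$, combined with transport estimates for $\nabla\rho$ and elliptic regularity for $\phi$ from \eqref{Navier Stokes Poisson}$_3$, controls $u$ in $H^2$; iterating gives $H^3$ and $H^4$ bounds on $[0,T]$ with finite but $T$-dependent constants. In my view the principal obstacle is the density estimate, since this is the only place in which $\delta_T$ must shrink with $T$; the Poisson coupling enters it only as a lower-order source, controllable by the basic energy, so that once the term $\rho\nabla\phi$ is handled as above the remainder of the argument parallels Hoff's program for compressible Navier--Stokes.
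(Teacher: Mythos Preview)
Your bootstrap framework, the energy identity (including the observation that $\int\rho\nabla\phi\cdot u=-\tfrac{d}{dt}\tfrac12\|\nabla\phi\|_{L^2}^2$), and the particle-path ODE for $\log\rho$ all match the paper's approach. The gap is in the density step, and it is exactly the step where the whole strength of the theorem lies.

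You propose to control $\int_0^t\|F(\cdot,s)\|_{L^\infty}\,ds$ by applying \eqref{bound on green function} to \eqref{Poisson eqn of F}, which requires $\rho\dot u(\cdot,s)\in L^2\cap L^4$ with a bound that is small with $C_0$. But the energy estimates that are independent of the $H^4$ norm are only $\sigma$-weighted: one controls $\int_0^t\sigma\|\rho^{1/2}\dot u\|_{L^2}^2$, $\sup\sigma^3\|\rho^{1/2}\dot u\|_{L^2}^2$, and $\int_0^t\sigma^3\|\nabla\dot u\|_{L^2}^2$. These weights are too severe to make $\int_0^1\|F(\cdot,s)\|_{L^\infty}\,ds$ small in $C_0$; concretely, $\|F(\cdot,0)\|_{L^\infty}$ involves $\|\mathrm{div}\,u_0\|_{L^\infty}$, which is bounded by the $H^4$ norm of the data but not by $C_0$. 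Since the content of Theorem~\ref{existence theorem} is precisely that $\delta_T$ does \emph{not} depend on $\|(\rho_0-\tilde\rho,u_0,\nabla\phi_0)\|_{H^4}$, your estimate $\int_0^T\|F\|_{L^\infty}\le\eta(T,C_0)\to0$ cannot hold as stated.

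The paper (following Hoff \cite{hoff95}, Lemma~4.2) does not estimate $\|F\|_{L^\infty}$ pointwise in time. It bounds the path integral $\int_{t_0}^{t_1}F(x(s),s)\,ds$ directly: writing $\rho\dot u^j=(\rho u^j)_t+(\rho u^ju^k)_{x_k}$ and integrating by parts in $s$ turns the $\rho\dot u$ contribution into boundary terms $\Gamma_{x_j}*(\rho u^j)\big|_{t_0}^{t_1}$---controlled by \eqref{bound on green function} because $\rho u\in L^2\cap L^4$ uniformly in time with bound $\lesssim C_0^\theta$---plus a space-time integral with kernel $\Gamma_{x_jx_k}(x(s)-y)$ against $[u^k(x(s),s)-u^k(y,s)]\rho u^j(y,s)$, handled via the spatial H\"older continuity of $u$. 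That H\"older bound in turn requires the full $\sigma$-weighted hierarchy (the functionals $\Phi$ and $H$ in the paper, closed against each other using the $L^q$ estimate \eqref{Lq bound on u} and the decomposition \eqref{Fourier multiplier on u}--\eqref{Fourier multiplier on F}), which is considerably more than the single ``second energy estimate'' you sketch. This integration-by-parts-in-time device is the essential idea you are missing; once it is in place, the Poisson contribution $\rho\nabla\phi$ is indeed lower order and handled as you indicate.
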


Next in Theorem~\ref{global existence theorem}, we show that for a given $L^2$ initial data, under the smallness assumption \eqref{smallness on initial energy depend on T} on $C_0$, there exists a weak solution $(\rho-\tilde\rho,u,\phi)$ of \eqref{Navier Stokes Poisson}-\eqref{initial condition} defined on $\R^3\times[0,T]$ for each $T>0$.
\begin{theorem}\label{global existence theorem}
Let the system parameters $P$, $\mu$, $\lambda$ be given and satisfy the conditions \eqref{assumption on P}-\eqref{assumption on viscosity in q}. Given $N$, $d$, $\bar\rho$, $\tilde\rho>0$ and $q>6$, for each $T>0$, there are constants $C_T,\theta_T,\delta_T>0$ such that if the initial data $(\rho_0-\tilde\rho,u_0,\nabla\phi_0)\in L^2$ satisfies \eqref{compatibility condition} and \eqref{Lq bound on initial data}-\eqref{def of L^2 initial data} and
\begin{equation}\label{smallness on initial energy depend on T weak}
C_0\le\delta_T
\end{equation}
\begin{equation}\label{pointwise bound on initial rho existence weak}
0\le {\rm ess}\inf\rho_0 \le {\rm ess}\sup\rho_0<\bar\rho-d,
\end{equation}
then a weak solution $(\rho-\tilde\rho,u,\phi)$ of \eqref{Navier Stokes Poisson}-\eqref{initial condition} in the sense of \eqref{weak form of mass equation}-\eqref{weak form of poisson equation} exists on $\R^3\times[0,T]$. In particular, $(\rho-\tilde\rho,u,\phi)$ satisfies the following:
\begin{equation}\label{1.4.1}
\rho-\tilde\rho,\,\nabla\phi,\,\rho u\in C([0,T];H^{-1}(\R^3)),
\end{equation}
\begin{equation}\label{L2 of nabla u weak solution}
\nabla u\in L^2(\R^3\times[0,T]),
\end{equation}
\begin{equation}\label{H1 of u weak solution}
u(\cdot,t)\in H^1 (\R^3),\;t\in(0,T],
\end{equation}
\begin{equation}\label{H1 of F and omega weak solution}
\mbox{$\omega(\cdot,t),F(\cdot,t)\in H^1 (\R^3)$, $t\in[0,T]$,}
\end{equation}
\begin{equation}\label{holder norm of u weak solution}
\langle u\rangle^{\frac{1}{2},\frac{1}{4}}_{\R^3 \times [\tau,T]}\leq C(\tau)C_{0}^{\theta},\;\tau\in(0,T],
\end{equation}
where $C(\tau)$ may depend additionally on a positive lower bound for $\tau$,
\begin{equation}\label{poinwise bound on rho weak solution}
\mbox{$0\le\rho(x,t)\le\bar\rho$ a.e. on $\R^3\times[0,T]$},
\end{equation}
and
\begin{align}\label{enegry bound on weak solution}
\sup_{0\le t\le T}\int_{\R^3}\big[(\rho - \tilde\rho)^2 + \rho|u|^2 &+ |\nabla\phi|^2+\sigma|\nabla u|^2 + \sigma^3 ( F^2 + |\nabla\omega|^2 )\big]dx\notag\\
+\int_{0}^{T}\!\!\!\int_{\R^3}\big[|\nabla u|^2 +& \sigma(\rho|\dot u|^2 + |\nabla\omega|^2 )+\sigma^{3}|\nabla\dot{u}|^2\big]dxds\le C_TC_{0}^{\theta_T}&
\end{align}
where $\sigma(t)=\min\{1,t\}$. 
\end{theorem}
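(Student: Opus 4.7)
The plan is to obtain the weak solution by compactness: mollify the $L^2$ initial data to produce smooth approximations, apply Theorem~\ref{existence theorem} to get a sequence of classical solutions on $[0,T]$, derive uniform \emph{a priori} estimates that match \eqref{enegry bound on weak solution}, and extract a limit. Concretely, I would mollify $(\rho_0 - \tilde\rho, u_0)$ to obtain $(\rho_0^\epsilon - \tilde\rho, u_0^\epsilon) \in H^4$ and define $\phi_0^\epsilon$ by $\Delta \phi_0^\epsilon = \rho_0^\epsilon - \tilde\rho$. Standard mollification properties preserve \eqref{compatibility condition} and \eqref{Lq bound on initial data}, give $C_0^\epsilon \le 2C_0$, and for $\epsilon$ small retain the strict bound $\mathrm{ess\,sup}\,\rho_0^\epsilon < \bar\rho - d/2$, so Theorem~\ref{existence theorem} yields classical solutions $(\rho^\epsilon, u^\epsilon, \phi^\epsilon)$ on $\R^3 \times [0,T]$.

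The technical heart of the proof is to show that these classical solutions satisfy \eqref{enegry bound on weak solution} with constants $C_T, \theta_T, \delta_T$ independent of $\epsilon$, together with the uniform pointwise bound $\rho^\epsilon \le \bar\rho$. I would follow the Hoff-type strategy of \cite{hoff95}--\cite{hoff06}: start with the basic energy identity controlling $\int (\rho - \tilde\rho)^2 + \rho|u|^2 + |\nabla\phi|^2$ plus the dissipation $\int\!\!\int |\nabla u|^2$; then multiply \eqref{momentum eqn rewritten} by $\dot u$ with the weights $\sigma$ and $\sigma^3$ to control $\nabla u$, $\sqrt\rho\,\dot u$ and $\nabla \dot u$; and derive the $H^1$ bounds on $F$ and $\omega$ from \eqref{Poisson eqn of F} and the vorticity equation. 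The Poisson force $\rho\nabla\phi$ must be tracked at every step: using $\Delta\phi = \rho - \tilde\rho$ and \eqref{bound on green function} one controls $\|\nabla\phi\|_{L^\infty}$ by $\|\rho - \tilde\rho\|_{L^2} + \|\rho - \tilde\rho\|_{L^4}$, which is itself small by the basic energy bound and the $L^\infty$ bound on $\rho$. The pointwise bound \eqref{poinwise bound on rho weak solution} is then obtained from the ODE $(\mu+\lambda)\frac{d}{dt}\log \rho + (P(\rho) - P(\tilde\rho)) = -\tilde\rho\, F$ along particle paths, estimating $\int_0^t F\,ds$ by Green-function bounds applied to \eqref{Poisson eqn of F}. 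These estimates are closed by a continuation argument on the set $\{t : \|\rho^\epsilon(\cdot,t)\|_{L^\infty} \le \bar\rho\}$.

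With the uniform bounds in place I would extract a subsequence (not relabelled) such that $\rho^\epsilon \to \rho$ weakly-$*$ in $L^\infty$ and strongly in $C([0,T]; L^p_{\mathrm{loc}})$ for $p < \infty$ by an Aubin--Lions argument applied to the mass equation, $u^\epsilon \rightharpoonup u$ weakly in $L^2(0,T; H^1(\R^3))$ with strong local convergence on positive-time subintervals coming from the H\"older bound \eqref{holder norm of u weak solution} (itself a consequence of the $L^q$ control on $\nabla u$ combined with \eqref{holder bound r}), and $\nabla\phi^\epsilon \to \nabla\phi$ strongly via the Poisson equation and the strong convergence of $\rho^\epsilon - \tilde\rho$. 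These convergences suffice to pass to the limit in the weak formulations \eqref{weak form of mass equation}--\eqref{weak form of poisson equation}, including the nonlinear products $\rho u \otimes u$, $P(\rho)$ (by dominated convergence using the uniform $L^\infty$ bound on $\rho^\epsilon$) and $\rho\nabla\phi$. The regularity properties \eqref{1.4.1}--\eqref{enegry bound on weak solution} are inherited from the corresponding uniform bounds through lower-semicontinuity.

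I expect the main obstacle to be the closure of the \emph{a priori} estimates in the presence of the Poisson coupling. Unlike the pure Navier--Stokes system treated in \cite{hoff95}--\cite{hoff06}, the electrostatic force $\rho\nabla\phi$ injects a nonlocal source into both \eqref{momentum eqn rewritten} and the equation \eqref{Poisson eqn of F} for $F$, and its contribution must be carefully measured against the small parameter $C_0$. In particular, the pointwise bound on $\rho$ feeds back into the bound on $\nabla\phi$, so the continuation argument must be arranged to resolve this interdependence; the final exponent $\theta_T$ in \eqref{enegry bound on weak solution} will be strictly less than $1$, reflecting the loss incurred when bounding the electrostatic contribution by powers of $C_0$.
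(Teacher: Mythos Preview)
Your overall architecture is exactly the one the paper uses: mollify the data, invoke Theorem~\ref{existence theorem} to produce classical solutions $(\rho^\epsilon,u^\epsilon,\phi^\epsilon)$, apply the uniform estimates of Theorem~\ref{uncontingent estimate theorem} (which already packages the Hoff-type energy hierarchy and the particle-path bound on $\rho$ that you describe), establish the H\"older bound \eqref{Holder continuity in space-time} on $u^\epsilon$, and pass to the limit. So the strategy and the handling of the Poisson coupling are fine and match the paper.

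There is, however, one genuine gap. You write that $\rho^\epsilon \to \rho$ ``strongly in $C([0,T];L^p_{\mathrm{loc}})$ for $p<\infty$ by an Aubin--Lions argument applied to the mass equation.'' A direct Aubin--Lions argument cannot deliver this: the approximate densities enjoy no spatial regularity beyond $\rho^\epsilon-\tilde\rho\in L^\infty(0,T;L^2\cap L^\infty)$, and there is no compact embedding $L^2\cap L^\infty\hookrightarrow L^2_{\mathrm{loc}}$ to feed into Aubin--Lions. From the mass equation you only get $\partial_t\rho^\epsilon$ bounded in $L^\infty(0,T;H^{-1})$, which together with the spatial bounds yields compactness in $C([0,T];H^{-1}_{\mathrm{loc}})$ but \emph{not} in $L^2_{\mathrm{loc}}$. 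Without strong $L^2_{\mathrm{loc}}$ convergence of $\rho^\epsilon$, you cannot identify the weak limit of $P(\rho^\epsilon)$ with $P(\rho)$ (dominated convergence needs a.e.\ convergence), nor pass to the limit in $\rho^\epsilon\nabla\phi^\epsilon$ or in the quadratic terms of the momentum equation.

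The paper closes this gap by invoking the Lions--Feireisl argument (see Lemma~\ref{compactness arguments lemma} and the references to \cite{lions98}, pp.~21--23, and \cite{feireisl04}): one uses the effective viscous flux identity to show that the weak limits of $\rho^\epsilon\log\rho^\epsilon$ and $\rho\log\rho$ coincide, which forces strong $L^2_{\mathrm{loc}}$ convergence of $\rho^\epsilon$. This is the one nontrivial compactness ingredient specific to compressible flow, and your proposal should flag it explicitly rather than subsume it under ``Aubin--Lions.''
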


Finally in Theorem~\ref{blow up theorem}, we obtain a blow-up criterion for the classical solutions to \eqref{Navier Stokes Poisson}-\eqref{initial condition} for the isothermal case {\it without} any smallness assumption on the initial data.
\begin{theorem}\label{blow up theorem}
Let the system parameters $P$, $\mu$, $\lambda$ be given and satisfy the conditions \eqref{assumption on viscosity}-\eqref{assumption on viscosity in q} and
\begin{equation}\label{isothermal assumption on P}
P(\rho)=K\rho,
\end{equation}
where $K>0$ is a given constant. Given $\tilde\rho>0$ and $(\rho_0-\tilde\rho,u_0,\nabla\phi_0)\in H^4(\R^3)$, assume that $(\rho-\tilde\rho,u,\phi)$ is the smooth classical solution on $\R^3\times[0,T]$. Let $T^*\ge T$ be the maximal existence time of the solution. If $T^*<\infty$, then we have
\begin{equation}
\lim_{t\to T^*}\|\rho\|_{L^\infty(\R^3\times(0,t))}=\infty.
\end{equation}
\end{theorem}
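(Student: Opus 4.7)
I argue by contradiction: assume $T^*<\infty$ and set $M:=\limsup_{t\uparrow T^*}\|\rho\|_{L^\infty(\R^3\times(0,t))}<\infty$. The aim is to show that under this bound alone, every norm required by the local existence theory for $H^4$ data remains bounded uniformly on $[0,T^*)$, so that restarting the short-time existence at some $t_0<T^*$ close to $T^*$ produces a continuation past $T^*$ and contradicts maximality.

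The first step is a basic energy identity. Multiplying \eqref{Navier Stokes Poisson}$_2$ by $u$ and integrating, the Poisson term is treated by the cancellation
\begin{equation*}
\intox\rho u\cdot\nabla\phi\,dx=-\intox\phi\,\divv(\rho u)\,dx=\intox\phi\,\dt\Delta\phi\,dx=-\frac{1}{2}\frac{d}{dt}\|\nabla\phi\|_{L^2}^2,
\end{equation*}
obtained from \eqref{Navier Stokes Poisson}$_1$ and \eqref{Navier Stokes Poisson}$_3$. This gives uniform control of $\|\sqrt{\rho}\,u\|_{L^2}^2+\|\rho-\tilde\rho\|_{L^2}^2+\|\nabla\phi\|_{L^2}^2$ and of $\int_0^{T^*}\|\nabla u\|_{L^2}^2\,dt$ in terms of $M$ and $C_0$. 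Multiplying \eqref{momentum eqn rewritten} by $\dot u$ and integrating, the new Poisson contribution $\intox\rho\nabla\phi\cdot\dot u\,dx$ is absorbed using $\|\nabla\phi\|_{L^6}\lesssim\|\rho-\tilde\rho\|_{L^2}$ together with the $L^\infty$ bound on $\rho$. With the usual weights $\sigma(t)$ this produces uniform bounds on $\|\nabla u\|_{L^2}^2$ and on $\int_0^{T^*}\!\!\intox\rho|\dot u|^2\,dxdt$; elliptic regularity for \eqref{Poisson eqn of F} then places $F$ and $\omega$ in $L^\infty_tH^1_x$, and \eqref{sobolev bound r} gives $\nabla u\in L^\infty_tL^p_x$ for every $p\in[2,6]$.

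The main obstacle, and where the isothermal hypothesis \eqref{isothermal assumption on P} is used, is closing the estimate on $\nabla\rho$ in the absence of any smallness assumption. With $P(\rho)=K\rho$, the continuity equation rewrites via \eqref{def of effective viscous flux} as $\dt\rho+u\cdot\nabla\rho=-(\mu+\lambda)^{-1}\rho(F+K(\rho-\tilde\rho))$. Differentiating in $x$, multiplying by $|\nabla\rho|^{q-2}\nabla\rho$ and integrating yields
\begin{equation*}
\frac{d}{dt}\|\nabla\rho\|_{L^q}\le C(M)\bigl(1+\|\nabla u\|_{L^\infty}\bigr)\|\nabla\rho\|_{L^q}+C(M)\|\nabla F\|_{L^q},
\end{equation*}
which must be closed through a Beale--Kato--Majda type logarithmic Sobolev inequality
\begin{equation*}
\|\nabla u\|_{L^\infty}\le C\bigl(1+\|\divv u\|_{L^\infty}+\|\omega\|_{L^\infty}\bigr)\log\bigl(e+\|\nabla^2u\|_{L^q}\bigr)+C\|\nabla u\|_{L^2},
\end{equation*}
together with an $L^1_t$-bound on $\|\divv u,\omega\|_{L^\infty}^2$ extracted from the $\int\rho|\dot u|^2$ estimate via \eqref{momentum eqn rewritten}, \eqref{Poisson eqn of F} and \eqref{bound on green function}. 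A double-Gronwall argument then forces $\|\nabla\rho\|_{L^q}$ to stay bounded on $[0,T^*)$.

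Once $\nabla\rho\in L^\infty_tL^q_x$ is in hand, a standard bootstrap (differentiating the system and using the bounds already obtained) recovers the full $H^4$ regularity of $(\rho-\tilde\rho,u,\nabla\phi)$ uniformly on $[0,T^*)$; applying Theorem~\ref{existence theorem} at $t_0<T^*$ sufficiently close to $T^*$ then extends the solution past $T^*$, contradicting the maximality of $T^*$. The technical heart of the argument is the logarithmic-Sobolev step driving the $\nabla\rho$ bound; the Poisson coupling only enters as a controlled lower-order perturbation once $\|\nabla\phi\|_{L^2}$ is known.
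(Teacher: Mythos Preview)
Your overall strategy---assume $\|\rho\|_{L^\infty}$ bounded, bootstrap to uniform $H^4$ control, restart the local theory---is the same as the paper's, but the mechanism you use to close the intermediate estimates is genuinely different, and your sketch hides the hard step.

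The paper does \emph{not} use a Beale--Kato--Majda logarithmic inequality. Instead it introduces a Lam\'e decomposition $u=u_p+u_s$ with $\mu\Delta u_p+(\mu+\lambda)\nabla\divv u_p=\nabla(P-\tilde P)$, so that under the isothermal law $P=K\rho$ and the bound $\rho\in L^\infty$ one has $\nabla u_p\in L^\infty_tL^r_x$ for every $r$ essentially for free. The ``regular'' piece $u_s$ is then estimated by testing its equation against $\partial_t u_s$; this yields $\sup_t\|\nabla u_s\|_{L^2}^2$, $\int\!\!\int\rho|\partial_t u_s|^2$ and $\int\!\!\int|\Delta u_s|^2$ bounded by constants depending only on $\bar C$ and $T^*$. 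With this in hand the paper closes a small algebraic system among $\Phi_1=\sup\|\sqrt\rho\,\dot u\|_{L^2}^2+\int\!\!\int\rho|\dot u|^2$, $\Phi_2$ (involving $\int\!\!\int|\nabla\dot u|^2$) and $\Phi_3=\int\!\!\int|\nabla u|^4$: one gets $\Phi_1\le M(1+\Phi_3)$, $\Phi_3\le M(1+\Phi_2^{1/2})$, $\Phi_2\le M(1+\Phi_1+\Phi_3)$, hence all are bounded. The bootstrap to $H^4$ is then outsourced to \cite{suen13a}.

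The gap in your argument is the line ``With the usual weights $\sigma(t)$ this produces uniform bounds on $\|\nabla u\|_{L^2}^2$ and on $\int_0^{T^*}\!\!\int\rho|\dot u|^2$''. The $\sigma$-weights are a small-data device from Section~\ref{Energy Estimates section} and play no role here; more importantly, multiplying the momentum equation by $\dot u$ throws up the cubic term $\int\!\!\int|\nabla u|^3$ (or quartic, at the next level), and without smallness you cannot absorb it by the left-hand side. This is precisely the circular coupling between $\sup_t\|\nabla u\|_{L^2}$, $\int\!\!\int\rho|\dot u|^2$ and $\int\!\!\int|\nabla u|^4$ that the paper's Lam\'e splitting is designed to break. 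Your BKM step presupposes an $L^2_t$ bound on $\|\divv u,\omega\|_{L^\infty}$, which in turn needs $\nabla\dot u\in L^2_{t,x}$, which you have not yet obtained. A BKM-type route can be made to work for related criteria, but as written your sketch does not explain how the loop is broken; the paper's decomposition does this cleanly and is the essential idea you are missing. (A minor point: the continuation at $t_0$ should invoke the local existence Theorem~\ref{local existence theorem}, not Theorem~\ref{existence theorem}, which carries a smallness hypothesis.)
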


\medskip

The rest of the paper is organised as follows. In Section~\ref{Energy Estimates section}, we derive {\em a priori} bounds for smooth, local-in-time solutions to \eqref{Navier Stokes Poisson}-\eqref{initial condition} under the assumption that densities are non-negative and bounded. In Section~\ref{pointwise bound density section} we derive pointwise bounds for the density, bounds which are independent both of time and of initial smoothness. This will then close the estimates as obtained in Section~\ref{Energy Estimates section} to give an uncontingent estimate for the smooth solutions, thereby proving Theorem~\ref{existence theorem}. In Section~\ref{weak solution section}, together with the {\em a priori} bounds obtained in previous sections, we prove Theorem~\ref{global existence theorem} by applying compactness arguments. Finally in Section~\ref{blow-up section}, we give the blow-up criterion for solutions to \eqref{Navier Stokes Poisson}-\eqref{initial condition} by obtaining estimates on solutions without smallness assumption on initial data.

\section{Energy Estimates}\label{Energy Estimates section}

In this section we derive {\em a priori} bounds for smooth, local-in-time solutions of \eqref{Navier Stokes Poisson}-\eqref{initial condition} whose densities are non-negative and bounded. We first recall the following local-in-time existence theorem for the system \eqref{Navier Stokes Poisson}-\eqref{initial condition} (see for example \cite{lmz10} and the references therein): 

\begin{theorem}\label{local existence theorem}
Given $\tilde\rho>0$ and initial data $(\rho_0-\tilde\rho,u_0,\phi_0)\in H^4(\R^3)$, we can find $T>0$ such that the classical solution $(\rho-\tilde\rho,u,\phi)$ to \eqref{Navier Stokes Poisson}-\eqref{initial condition} exists on $\R^3\times[0,T]$. Moreover, $(\rho-\tilde\rho,u,\phi)$ satisfies
\begin{equation}\label{classical rho}
\rho - \tilde{\rho}\in C^0([0,T];H^4(\R^3))\cap C^1 ([0,T];H^3(\R^3)),
\end{equation}
\begin{equation}\label{classical u}
u\in C^0([0,T];H^4(\R^3))\cap C^1([0,T];H^2 (\R^3)),
\end{equation}
and
\begin{equation}\label{classical phi}
\nabla\phi\in C^0([0,T];H^5(\R^3)).
\end{equation} 
\end{theorem}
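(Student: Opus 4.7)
The plan is to construct a local classical solution by a Picard-type iteration that respects the hyperbolic--parabolic--elliptic splitting of the system. Starting from $(\rho^{0},u^{0},\phi^{0})=(\rho_{0},u_{0},\phi_{0})$, I would inductively define $(\rho^{n+1},u^{n+1},\phi^{n+1})$ by solving (i) the linear transport equation $\rho^{n+1}_{t}+\divv(\rho^{n+1}u^{n})=0$ with initial datum $\rho_{0}$; (ii) the Poisson equation $\Delta\phi^{n+1}=\rho^{n+1}-\tilde\rho$ on $\R^{3}$, which is uniquely solvable with $\nabla\phi^{n+1}\in L^{2}$ because the compatibility $\intox(\rho^{n+1}-\tilde\rho)dx=0$ is preserved by (i); and (iii) the linear strongly parabolic system $\rho^{n+1}(u^{n+1}_{t}+u^{n}\cdot\nabla u^{n+1})=\mu\Delta u^{n+1}+\lambda\nabla\divv u^{n+1}-\nabla P(\rho^{n+1})+\rho^{n+1}\nabla\phi^{n+1}$ with initial datum $u_{0}$, valid provided $\rho^{n+1}$ is bounded below by a positive constant, which holds on a short interval by continuity since $\rho_{0}-\tilde\rho\in H^{4}\hookrightarrow C^{0}$ and $\tilde\rho>0$.

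The central step is to obtain uniform-in-$n$ bounds on a common interval $[0,T]$ in the space
\[
X_{T}=\bigl\{(\varrho,v,\psi)\colon \varrho\in C([0,T];H^{4}),\;v\in C([0,T];H^{4})\cap L^{2}(0,T;H^{5}),\;\nabla\psi\in C([0,T];H^{5})\bigr\}.
\]
Differentiating the transport equation up to fourth order and applying Moser-type commutator estimates yields $\|\rho^{n+1}-\tilde\rho\|_{H^{4}}\le\|\rho_{0}-\tilde\rho\|_{H^{4}}\exp\bigl(C\int_{0}^{t}\|\nabla u^{n}\|_{H^{3}}ds\bigr)$. Elliptic regularity on $\R^{3}$ gives $\|\nabla\phi^{n+1}\|_{H^{5}}\le C\|\rho^{n+1}-\tilde\rho\|_{H^{4}}$. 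Standard linear parabolic theory for the momentum system, with variable positive coefficient $\rho^{n+1}$ in $H^{4}$, produces the $H^{4}$ bound for $u^{n+1}$ and, through the equation itself, $u^{n+1}_{t}\in L^{2}(0,T;H^{2})$. A Gr\"onwall argument then shows that for $T$ sufficiently small, depending only on the $H^{4}$-norm of the initial data and a positive lower bound on $\rho_{0}$, the iteration stabilises a ball in $X_{T}$.

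Next I would show the iteration is a contraction in the weaker norm $Y_{T}=C([0,T];L^{2})\times\bigl(C([0,T];L^{2})\cap L^{2}(0,T;H^{1})\bigr)\times C([0,T];H^{1})$. The differences $(\rho^{n+1}-\rho^{n},u^{n+1}-u^{n},\nabla\phi^{n+1}-\nabla\phi^{n})$ satisfy linearised equations whose source terms pair $X_{T}$-bounded iterates against previous differences, giving a contraction factor $CT^{\alpha}$ for some $\alpha>0$ after possibly shrinking $T$. The fixed point $(\rho-\tilde\rho,u,\phi)$ solves the full nonlinear system, and the regularity assertions \eqref{classical rho}--\eqref{classical phi} follow from the equations themselves: $\rho_{t}=-\divv(\rho u)\in C([0,T];H^{3})$, so $\rho-\tilde\rho\in C^{1}([0,T];H^{3})$; the momentum equation solved for $u_{t}$ yields $u_{t}\in C([0,T];H^{2})$ when $u\in C([0,T];H^{4})$ and $\rho-\tilde\rho\in C([0,T];H^{4})$; and $\nabla\phi\in C([0,T];H^{5})$ by elliptic regularity from the Poisson equation.

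I expect the main obstacle to be the closure of the $H^{4}$ energy estimate without a loss of derivative from the pressure and convective terms. The standard remedy, originating in Matsumura--Nishida \cite{mn79}--\cite{mn80} and implemented for NSP in \cite{lmz10}, is to symmetrise the hyperbolic part by weighting the top-order estimate with $P'(\rho)/\rho$, so that the dangerous pressure--divergence interaction between $\partial^{\alpha}(\rho-\tilde\rho)$ and $\partial^{\alpha}\divv u$ cancels. The Poisson forcing $\rho\nabla\phi$ is comparatively benign because elliptic regularity gives $\nabla\phi$ one derivative more than $\rho$, and the compatibility condition \eqref{compatibility condition} ensures $\nabla\phi\in L^{2}(\R^{3})$ rather than being determined only up to a constant. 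A secondary technical point is maintaining the strict positivity of $\rho$ during the iteration, which is achieved by a continuity argument together with the short-time estimate $\|\rho-\rho_{0}\|_{L^{\infty}}\le CT^{1/2}$ derived from the $X_{T}$ bound.
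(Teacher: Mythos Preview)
The paper does not actually prove this statement: it is presented as a recalled fact with the remark ``see for example \cite{lmz10} and the references therein,'' and no argument is supplied. Your proposal, by contrast, outlines a full proof via a hyperbolic--parabolic--elliptic iteration in the style of Matsumura--Nishida, which is indeed the standard route and is essentially what one finds in the cited references. So the two are not really comparable as proofs; you have simply filled in what the paper chose to outsource.

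One caveat on your sketch: you justify strict positivity of $\rho^{n+1}$ by arguing that $\rho_{0}-\tilde\rho\in H^{4}\hookrightarrow C^{0}$ together with $\tilde\rho>0$ forces $\rho_{0}$ to be bounded away from zero. That inference is not quite right as stated: $\rho_{0}-\tilde\rho\in H^{4}$ only gives $\rho_{0}\to\tilde\rho$ at infinity and $\rho_{0}$ continuous; it does not by itself exclude $\rho_{0}$ touching zero on a compact set. The theorem as written in the paper is silent on a lower bound for $\rho_{0}$, and in fact the downstream theorems in the paper allow $\mathrm{ess}\inf\rho_{0}=0$. For the iteration you describe to close at the $H^{4}$ level via standard parabolic theory, one genuinely needs $\inf\rho_{0}>0$ (so that the momentum equation is uniformly parabolic), and this hypothesis is tacitly assumed in \cite{lmz10} as well. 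You should either add it explicitly or remark that the vacuum case requires a separate treatment (e.g.\ via the compatibility-condition framework for degenerate parabolic systems), which is beyond what the paper invokes here.
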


Given $T>0$, we now fix $\tilde\rho,\bar{\rho}$ as described in Section~\ref{Introduction section} and a smooth classical solution $(\rho-\tilde\rho,u,\phi)$ of \eqref{Navier Stokes Poisson}-\eqref{initial condition} on $\R^3\times[0,T]$ satisfying \eqref{classical rho}-\eqref{classical phi} with initial data $(\rho_0-\tilde\rho,u_0,\nabla\phi_0)$. With respect to $(\rho-\tilde\rho,u,\phi)$ , we then define functionals $\Phi(t)$ and $H(t)$ for a given such solution by
\begin{align}\label{def of Phi}
\Phi(t)&=\sup_{0\le s\le t}\Big[\|\rho^\frac{1}{2} u(\cdot,s)\|_{L^2}^2+||(\rho-\tilde\rho)(\cdot,s)||_{L^2}^2+\|\nabla\phi(\cdot,s)\|^2_{L^2}\Big]\notag\\
&\,\,\,+\sup_{0\le s\le t}\Big[\sigma(s)\|\nabla u(\cdot,s)\|^2_{L^2}+\sigma^3(s)\|\rho^\frac{1}{2}\dot{u}(\cdot,s)\|^2_{L^2}\Big]\notag\\
&\,\,\,+\int_{0}^{t}\Big[\|\nabla u(\cdot,s)\|_{L^2}^2+\sigma(s)\|\rho^\frac{1}{2}\dot{u}(\cdot,s)\|_{L^2}^2+\sigma^3(s)\|\nabla\dot{u}(\cdot,s)\|^2_{L^2}\Big]ds.
\end{align}
\begin{align}\label{def of H}
H(t)=\int_0^t\sigma^\frac{3}{2}(s)\|\nabla u(\cdot,s)\|^3_{L^3} ds&+\int_0^t\sigma^3(s)\|\nabla u(\cdot,s)\|^4_{L^4} ds\\
&+\Big |\sum_{1\le k_i,j_m\le 3}\int_{0}^{t}\!\!\!\int_{\R^3}\sigma u^{j_1}_{x_{k_1}}u^{ j_2}_{x_{k_2}}u^{j_3}_{x_{k_3}}dxds\Big |\notag,
\end{align}

where $\sigma(t)\equiv\min\{1,t\}$. We obtain {\em a priori} bounds for $\Phi(t)$ and $H(t)$ under the assumptions that
\begin{itemize}
\item the initial energy $C_0$ in \eqref{def of L^2 initial data} is small;
\item the density $\rho$ remains bounded above and non-negative.
\end{itemize}
The results can be summarised in the following theorem:

\begin{theorem}\label{a priori estimate with rho bounded theorem}  
Let $N$, $d$, $\bar\rho$, $\tilde\rho>0$ be given. Assume that the system parameters in \eqref{Navier Stokes Poisson} satisfy the conditions in \eqref{assumption on P}-\eqref{assumption on viscosity in q}. For $T>0$, if $(\rho-\tilde\rho,u,\phi)$ is the classical solution of \eqref{Navier Stokes Poisson}-\eqref{initial condition} defined on $\R^3\times[0,T]$ with smooth initial data $(\rho_0-\tilde\rho,u_0,\nabla\phi_0)\in H^4(\R^3)$ satisfying \eqref{compatibility condition} and \eqref{Lq bound on initial data}-\eqref{def of L^2 initial data}, and if
\begin{equation}\label{pointwise bound assumption on rho} 
\mbox{$0\le\rho(x,t)\le\bar\rho$ on $\R^3\times[0,T]$},
\end{equation}
then we can find positive constants $\delta_T$, $M_T$ and $\theta_T$ such that: if $(\rho_0-\tilde\rho,u_0,\nabla\phi_0)$ further satisfies  
\begin{equation}\label{smallness assumption a priori}
C_{0}\le\delta_T
\end{equation}
with 
\begin{align*}
0\le {\rm ess}\inf\rho_0 \le {\rm ess}\sup\rho_0<\bar\rho-d,
\end{align*}
then the following bound holds
\begin{equation}\label{bound on Phi and H with bounds on rho}
\mbox{$\Phi(t)+H(t)\le M_TC_{0}^{\theta_T}$ on $\R^3\times[0,T]$.}
\end{equation}
\end{theorem}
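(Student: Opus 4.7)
The plan is to establish \eqref{bound on Phi and H with bounds on rho} by a standard bootstrap/continuity argument on the functional $G(t):=\Phi(t)+H(t)$, carrying out the Hoff energy-estimate scheme from \cite{hoff95}-\cite{hoff06} adapted to handle the electrostatic coupling through $\phi$. Concretely, I would assume on a maximal subinterval of $[0,T]$ that $G(t)\le M_T C_0^{\theta_T}$ with $M_T$ to be chosen, and then improve the bound to $G(t)\le \tfrac12 M_T C_0^{\theta_T}$ provided $C_0\le\delta_T$. The standing density bound \eqref{pointwise bound assumption on rho} will be used freely.

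First I would derive the basic energy identity by multiplying \eqref{Navier Stokes Poisson}$_2$ by $u^j$ and summing. The new Poisson contribution is handled by a clean integration by parts:
$$\int_{\R^3}\rho u\cdot\nabla\phi\,dx=-\int_{\R^3}\phi\,\divv(\rho u)\,dx=\int_{\R^3}\phi\,\rho_t\,dx=\int_{\R^3}\phi\,\Delta\phi_t\,dx=-\tfrac12\tfrac{d}{dt}\int_{\R^3}|\nabla\phi|^2\,dx,$$
so that $\|\nabla\phi\|_{L^2}^2$ enters with a good sign and is controlled at the zeroth-order energy level; combined with the standard pressure--mass cancellation we obtain $\sup_{[0,t]}\int[(\rho-\tilde\rho)^2+\rho|u|^2+|\nabla\phi|^2]+\int_0^t\|\nabla u\|_{L^2}^2\le C C_0$. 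Second I would multiply the rewritten momentum equation \eqref{momentum eqn rewritten} by $\dot u$ and integrate to get the $\sigma$-weighted bounds for $\|\nabla u\|_{L^2}^2$ and $\int\sigma\|\rho^{1/2}\dot u\|_{L^2}^2$; the new term $\int\rho\nabla\phi\cdot\dot u$ is controlled by $\|\rho\|_{L^\infty}^{1/2}\|\rho^{1/2}\dot u\|_{L^2}\|\nabla\phi\|_{L^2}$ and the elliptic estimate $\|\nabla\phi\|_{L^6}\le C\|\rho-\tilde\rho\|_{L^2}$ coming from $\Delta\phi=\rho-\tilde\rho$.

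Third I would apply the material derivative $\dt+\divv(\cdot\,u)$ to \eqref{momentum eqn rewritten}, multiply by $\dot u$, and integrate to obtain the $\sigma^3$-weighted estimate producing the terms $\sigma^3\|\rho^{1/2}\dot u\|_{L^2}^2$ and $\int_0^t\sigma^3\|\nabla\dot u\|_{L^2}^2$ in $\Phi(t)$. The commutator terms from the convective piece of the material derivative produce cubic nonlinearities in $\nabla u$, which after integration by parts reduce to quantities controlled by $H(t)$; the new Poisson contribution $[\rho\phi_{x_j}]^\cdot$ expands into $\rho_t\phi_{x_j}+\rho\phi_{tx_j}+(\rho\phi_{x_j})\cdot\nabla u$, each factor of which is handled via the Poisson equation, mass equation and the $L^6$ bound on $\nabla\phi$. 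To bound $H(t)$ I would invoke the $F$--$\omega$ splitting
$$u^j_{x_kx_k}=\omega^{j,k}_{x_k}+(\mu+\lambda)^{-1}[F+P(\rho)-P(\tilde\rho)]_{x_j},$$
combined with the Poisson representation \eqref{Poisson eqn of F} (now with the extra $\rho\nabla\phi$ source) and the $L^q$ bound on $u$ (the lemma explicitly referenced in the text), interpolating $\|\nabla u\|_{L^3}$ and $\|\nabla u\|_{L^4}$ between $L^2$ and the quantities appearing in $\Phi(t)$.

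The main obstacle will be closing the circular dependence: the cubic term in $H(t)$ and the time integrals of $\|\nabla u\|_{L^3}^3$ and $\|\nabla u\|_{L^4}^4$ generate nonlinear self-couplings that Hoff's scheme controls by picking up small powers of $\Phi(t)$, and in the NSP setting every Poisson-generated remainder must similarly be shown to close as either $\frac{d}{dt}\|\nabla\phi\|_{L^2}^2$ (as at zeroth order) or a product bounded by $G(t)^\alpha C_0^\beta$ for some $\alpha,\beta>0$ with $\alpha+\beta>1$. Once every inequality is cast in the master form $G(t)\le C_T C_0^{\theta_T}+C_T\,G(t)^{1+\eta}$ for some $\eta>0$, a continuity argument on $[0,T]$ shows that if $C_0\le\delta_T$ is small enough (depending on $T$, $N$, $d$, $\bar\rho$, $\tilde\rho$), then $G(t)\le M_T C_0^{\theta_T}$ throughout $[0,T]$, completing the proof of \eqref{bound on Phi and H with bounds on rho}.
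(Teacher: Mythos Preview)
Your proposal is correct and follows essentially the same approach as the paper: the paper proves the basic energy estimate (your first step), auxiliary bounds on $\phi$, the $L^q$ velocity estimate, then derives $\Phi(t)\le M[C_0^{\theta_1}+C_0+H(t)]$ from the $\sigma$- and $\sigma^3$-weighted estimates (your second and third steps) and $H(t)\le M[C_0^{\theta_1}+\Phi(t)^{\theta_2}]$ with $\theta_2>1$ from the $F$--$\omega$ decomposition (your fourth step), closing by continuity exactly as you describe. The only minor difference is that the paper relies on $L^\infty$ bounds for $\nabla\phi$ and $\Delta\phi$ (via $\Delta\phi=\rho-\tilde\rho\in L^2\cap L^\infty$) rather than the $L^6$ elliptic estimate you propose, but either route closes the Poisson remainder terms.
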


Unless otherwise specified, $M$ will denote a generic positive constant which depends on $T$ and the same quantities as the constant $C_T$ in the statement of Theorem~\ref{global existence theorem} but independent the regularity of initial data. And for simplicity, we write $P=P(\rho)$ and $\tilde P=P(\tilde\rho)$, etc., without further referring.

\medskip

We begin with the following $L^2$-estimate on $(\rho-\tilde\rho,u,\phi)$ which is valid for all $t\in[0,T]$.

\begin{lemma}\label{L^2 estimate lemma}
For all $t\in[0,T]$, we have
\begin{equation}\label{L^2 estimate}
\sup_{0\le s\le t}\intox\Big(\rho|u|^2+|\rho-\tilde\rho|^2+|\nabla\phi|^2\Big)+\intoxt|\nabla u|^2\le MC_0.
\end{equation}
\end{lemma}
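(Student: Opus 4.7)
The plan is to perform the standard relative-entropy energy estimate, taking care of the new Poisson coupling term using the Poisson equation \eqref{Navier Stokes Poisson}$_3$ itself. First, I would multiply the momentum equation \eqref{Navier Stokes Poisson}$_2$ by $u^j$, sum in $j$, and integrate by parts over $\R^3$. Using the continuity equation \eqref{Navier Stokes Poisson}$_1$ to rewrite the transport part in divergence form, the kinetic contribution becomes $\tfrac{1}{2}\tfrac{d}{dt}\int \rho|u|^2\,dx$, the viscous terms produce $\mu\int|\nabla u|^2 + \lambda\int(\divv u)^2$, and the pressure term becomes $-\int (P-\tilde P)\,\divv u\,dx$.

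Next I would eliminate the pressure term via a relative entropy. Setting
\begin{equation*}
\Pi(\rho) = \rho\int_{\tilde\rho}^{\rho}\frac{P(s)-P(\tilde\rho)}{s^2}\,ds,
\end{equation*}
a direct check using \eqref{assumption on P} gives $\rho\Pi'(\rho)-\Pi(\rho)=P(\rho)-P(\tilde\rho)$, so \eqref{Navier Stokes Poisson}$_1$ yields $\Pi(\rho)_t+\divv(\Pi(\rho)u)+(P-\tilde P)\divv u=0$ and therefore $-\int(P-\tilde P)\divv u\,dx = \tfrac{d}{dt}\int\Pi(\rho)\,dx$. Since $0\le\rho\le\bar\rho$ by \eqref{pointwise bound assumption on rho} and $P\in C^2$ with $P'(\tilde\rho)>0$, a Taylor expansion gives $c_1(\rho-\tilde\rho)^2\le\Pi(\rho)\le c_2(\rho-\tilde\rho)^2$ with constants depending only on $\bar\rho,\tilde\rho,P$.

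For the Poisson forcing term, I would use $\divv(\rho u)=-\rho_t=-(\rho-\tilde\rho)_t=-(\Delta\phi)_t$ from \eqref{Navier Stokes Poisson}$_1$ and \eqref{Navier Stokes Poisson}$_3$; integrating by parts twice,
\begin{equation*}
\int_{\R^3}\rho u\cdot\nabla\phi\,dx=-\int_{\R^3}\divv(\rho u)\,\phi\,dx=\int_{\R^3}(\Delta\phi)_t\,\phi\,dx=-\frac{1}{2}\frac{d}{dt}\int_{\R^3}|\nabla\phi|^2\,dx,
\end{equation*}
where the boundary terms at infinity vanish thanks to the $H^4$ regularity in Theorem~\ref{local existence theorem} and the compatibility condition \eqref{compatibility condition} which ensures $\nabla\phi\in L^2$. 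Combining everything gives the conservation identity
\begin{equation*}
\frac{d}{dt}\int_{\R^3}\left[\tfrac{1}{2}\rho|u|^2+\Pi(\rho)+\tfrac{1}{2}|\nabla\phi|^2\right]dx+\mu\int_{\R^3}|\nabla u|^2\,dx+\lambda\int_{\R^3}(\divv u)^2\,dx=0,
\end{equation*}
and integration from $0$ to $t\in[0,T]$, together with the equivalence $\Pi(\rho)\sim(\rho-\tilde\rho)^2$ and the definition \eqref{def of L^2 initial data} of $C_0$, yields \eqref{L^2 estimate}.

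The main technical subtlety, rather than an obstacle, is justifying the integrations by parts against $\phi$ on $\R^3$. Here $\phi$ is only determined up to an additive constant and need not decay, but only $\nabla\phi$ appears in the eventual bound, and the compatibility condition $\int(\rho_0-\tilde\rho)=0$ propagates in time via \eqref{Navier Stokes Poisson}$_1$, so that $\Delta\phi=\rho-\tilde\rho$ has mean zero and $\nabla\phi\in L^2$. With the $H^4$ regularity from Theorem~\ref{local existence theorem} the computations above are rigorous, and no smallness on $C_0$ is used: the estimate \eqref{L^2 estimate} is an unconditional consequence of \eqref{pointwise bound assumption on rho}.
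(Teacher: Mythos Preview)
Your proof is correct and follows essentially the same route as the paper: both derive the standard energy identity by multiplying the momentum equation by $u$, introduce the same relative entropy $\Pi(\rho)=\mathcal{G}(\rho)=\rho\int_{\tilde\rho}^{\rho}s^{-2}(P(s)-P(\tilde\rho))\,ds$ to absorb the pressure term, and handle the Poisson coupling via $\Delta\phi_t=-\divv(\rho u)$ to obtain $\int\rho u\cdot\nabla\phi=-\tfrac12\tfrac{d}{dt}\int|\nabla\phi|^2$, yielding the exact energy balance and hence \eqref{L^2 estimate}. Your write-up is simply a bit more explicit about the Taylor equivalence $\Pi(\rho)\sim(\rho-\tilde\rho)^2$ and the justification of the integrations by parts.
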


\begin{proof}
By direct computation, we readily have
\begin{equation*}
\frac{1}{2}\intox\rho|u|^2\Big|_0^t+\intoxt\Big(\mu|\nabla u|^2+\lambda(\divv(u))^2\Big)+\intox\mathcal{G}(\rho)\Big|_0^t=\intoxt\rho\nabla\phi\cdot u,
\end{equation*}
where $\dis \int_{\R^d}\mathcal{G}(\rho)=\int_{\R^d}\left(\rho\int_{\tilde{\rho}}^{\rho}s^{-2}( P(s)-P(\tilde\rho))ds\right)$ is comparable to the $L^2(\R^3)$ norm of $(\rho-\tilde\rho)$ (see \cite{hoff95} for related discussion). On the other hand, using the equations \eqref{Navier Stokes Poisson}$_1$ and \eqref{Navier Stokes Poisson}$_3$, we have
\begin{equation*}
\Delta\phi_t=\rho_t=-\divv(\rho u),
\end{equation*}
and hence
\begin{equation*}
\frac{1}{2}\intox|\nabla\phi(x,t)|^2dx\Big|_0^t=-\intoxt\rho u\cdot\nabla\phi.
\end{equation*}
Therefore we obtain the following energy balance equation:
\begin{equation}\label{energy balance}
\frac{1}{2}\intox\rho|u|^2\Big|_0^t+\frac{1}{2}\intox|\nabla\phi|^2\Big|_0^t+\intoxt\Big(\mu|\nabla u|^2+\lambda(\divv(u))^2\Big)+\intox\mathcal{G}(\rho)\Big|_0^t=0,
\end{equation}
and the result \eqref{L^2 estimate} follows.
\end{proof}

Next we prove the following lemma which gives some auxiliary bounds on $\phi$. These bounds will be useful in later analysis.

\begin{lemma}\label{auxiliary estimate on phi lemma}
For all $t\in[0,T]$, we have
\begin{align}
\sup_{0\le s\le t}\intox|\nabla\phi_t(x,t)|^2dx&\le M\sup_{0\le s\le t}\intox\rho|u|^2(x,s)dx,\label{auxiliary estimate on phi 1}\\
\sup_{0\le s\le t}\|\nabla\phi(\cdot,s)\|_{L^\infty}&\le C(r)\Big(C_0+C_0^\frac{1}{r}\Big),\label{auxiliary estimate on phi 2}\\
\sup_{0\le s\le t}\|\Delta\phi(\cdot,s)\|_{L^\infty}&\le M\sup_{0\le s\le t}\|(\rho-\tilde\rho)(\cdot,s)\|_{L^\infty}\label{auxiliary estimate on phi 3}
\end{align}
where $r>3$ and $C(r)>0$ depends on $r$.
\end{lemma}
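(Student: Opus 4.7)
The plan is to handle the three estimates separately, each stemming from the Poisson equation $\Delta\phi = \rho - \tilde\rho$.

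Estimate \eqref{auxiliary estimate on phi 3} is immediate: pointwise $|\Delta\phi(x,t)| = |(\rho-\tilde\rho)(x,t)|$, and taking $L^\infty$-norms gives the claim.

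For \eqref{auxiliary estimate on phi 1}, I would differentiate $\Delta\phi = \rho - \tilde\rho$ in $t$ and substitute the mass equation \eqref{Navier Stokes Poisson}$_1$ to obtain $\Delta\phi_t = \rho_t = -\divv(\rho u)$. Multiplying by $\phi_t$ and integrating by parts (justified by the smoothness/decay of $\phi_t$ and $\rho u$ coming from Theorem~\ref{local existence theorem}) gives
\begin{equation*}
\int_{\R^3}|\nabla\phi_t|^2\,dx = \int_{\R^3}\nabla\phi_t\cdot (\rho u)\,dx.
\end{equation*}
Cauchy--Schwarz together with the pointwise bound \eqref{pointwise bound assumption on rho} yields
\begin{equation*}
\|\nabla\phi_t\|_{L^2}^2 \le \|\nabla\phi_t\|_{L^2}\|\rho u\|_{L^2} \le \bar\rho^{1/2}\,\|\nabla\phi_t\|_{L^2}\,\|\rho^{1/2}u\|_{L^2},
\end{equation*}
and dividing by $\|\nabla\phi_t\|_{L^2}$ produces \eqref{auxiliary estimate on phi 1}.

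For \eqref{auxiliary estimate on phi 2}, I would represent $\nabla\phi = \nabla\Gamma * (\rho-\tilde\rho)$ and use a slight strengthening of \eqref{bound on green function}: splitting the Riesz kernel $\nabla\Gamma(y)\sim|y|^{-2}$ into its singular part ($|y|\le 1$, in $L^s$ for $s<3/2$) and its tail ($|y|>1$, in $L^s$ for $s>3/2$), and applying Young's inequality on each piece, yields
\begin{equation*}
\|\nabla\phi\|_{L^\infty} \le C(r)\bigl(\|\rho-\tilde\rho\|_{L^2} + \|\rho-\tilde\rho\|_{L^r}\bigr)
\end{equation*}
for any $r>3$. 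The $L^2$ part is bounded by $MC_0^{1/2}$ via Lemma~\ref{L^2 estimate lemma}, while interpolation with \eqref{pointwise bound assumption on rho} gives
\begin{equation*}
\|\rho-\tilde\rho\|_{L^r}^r \le \|\rho-\tilde\rho\|_{L^\infty}^{r-2}\|\rho-\tilde\rho\|_{L^2}^2 \le M C_0,
\end{equation*}
hence $\|\rho-\tilde\rho\|_{L^r}\le MC_0^{1/r}$. Combining these produces \eqref{auxiliary estimate on phi 2} (with $C_0^{1/2}$ absorbed into $C_0$ under the smallness assumption $C_0\le 1$).

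The only mildly nontrivial point is \eqref{auxiliary estimate on phi 2}: producing the stated $C_0 + C_0^{1/r}$ scaling forces the use of the $L^r$-variant of the Riesz potential bound for exactly the $r>3$ appearing in the lemma, and the interpolation against $\|\rho-\tilde\rho\|_{L^\infty}$ (rather than any higher-regularity norm) to exploit the pointwise density bound \eqref{pointwise bound assumption on rho}. The other two estimates are essentially one-line consequences of the Poisson equation.
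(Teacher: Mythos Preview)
Your proof is correct. Estimates \eqref{auxiliary estimate on phi 1} and \eqref{auxiliary estimate on phi 3} are handled exactly as in the paper. For \eqref{auxiliary estimate on phi 2} you take a slightly different route: the paper applies the Sobolev embedding $W^{1,r}\hookrightarrow L^\infty$ (i.e.\ \eqref{infty bound r}) to $\nabla\phi$, giving $\|\nabla\phi\|_{L^\infty}\le C(r)(\|\nabla\phi\|_{L^r}+\|\Delta\phi\|_{L^r})$, then controls $\|\nabla\phi\|_{L^r}$ via the Gagliardo--Nirenberg interpolation \eqref{sobolev bound r} and $\|\Delta\phi\|_{L^r}=\|\rho-\tilde\rho\|_{L^r}$ via interpolation with the $L^\infty$ bound on $\rho$. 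You instead go directly through the Riesz potential representation $\nabla\phi=\nabla\Gamma*(\rho-\tilde\rho)$, split the kernel $|y|^{-2}$ at $|y|=1$, and apply Young's inequality on each piece. Your route is a bit more elementary (it avoids the implicit Calder\'on--Zygmund step needed to pass from $\|\Delta\phi\|_{L^r}$ to $\|\nabla^2\phi\|_{L^r}$ in the Sobolev approach) and yields the bound $C(r)(C_0^{1/2}+C_0^{1/r})$, which under $C_0\le 1$ is dominated by $2C(r)C_0^{1/r}$ and hence by the paper's stated form $C(r)(C_0+C_0^{1/r})$.
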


\begin{proof}
Since $\Delta\phi_t=\rho_t=-\divv(\rho u)$, so we have
\begin{equation*}
\intox\Delta\phi_t\cdot\phi_t(x,t)dx=\intox\rho u\cdot\nabla\phi_t(x,t)dx,
\end{equation*}
which implies
\begin{equation*}
\intox|\nabla\phi_t(x,t)|^2dx\le C\Big(\intox\rho|u|^2(x,t)dx\Big)^\frac{1}{2}\Big(\intox|\nabla\phi_t(x,t)|^2dx\Big)^\frac{1}{2},
\end{equation*}
and \eqref{auxiliary estimate on phi 1} follows by \eqref{L^2 estimate}. To prove \eqref{auxiliary estimate on phi 2}, using \eqref{sobolev bound r} and \eqref{L^2 estimate}, for $r>3$, there exists $\alpha_r\in(0,1)$ and $C(r)>0$ such that
\begin{align*}
\|\nabla\phi(\cdot,t)\|_{L^\infty}&\le C(r)\Big(\|\nabla\phi(\cdot,t)\|_{L^r}+\|\Delta\phi(\cdot,t)\|_{L^r}\Big)\\
&\le C(r)\Big(\|\nabla\phi(\cdot,t)\|_{L^2}^\frac{\alpha_r}{2}\|\Delta\phi(\cdot,t)\|_{L^2}^\frac{1-\alpha_r}{2}+M\|\Delta\phi(\cdot,t)\|^\frac{1}{r}_{L^2}\Big)\\
&\le C(r)\Big(C_0+C_0^\frac{1}{r}).
\end{align*}
Finally, \eqref{auxiliary estimate on phi 3} follows immediately from the equation \eqref{Navier Stokes Poisson}$_3$.
\end{proof}

We prove the following auxiliary $L^q$ estimates on the velocity $u$ which will be used for controlling the $L^3$ norm of $\nabla u$.

\begin{lemma}\label{Lq bound on u lemma} 
For all $t\in[0,T]$, we have
\begin{align}\label{Lq bound on u}
\sup_{0\le s\le t}\intox|u(t)|^q+\intoxt|u|^{q-2}|\nabla u|^2\le M\Big[C_0+\intox|u_0|^q\Big]\le M\left[C_0+N\right].
\end{align}
\end{lemma}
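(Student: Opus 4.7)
The plan is to run a standard $L^q$-type energy estimate. I multiply the momentum equation \eqref{Navier Stokes Poisson}$_2$ by $|u|^{q-2}u^j$, sum over $j=1,2,3$, and integrate over $\R^3\times[0,t]$. Using the continuity equation \eqref{Navier Stokes Poisson}$_1$ to convert the resulting material derivative, the time-derivative portion of the left-hand side collapses to $\tfrac{1}{q}\tfrac{d}{dt}\int\rho|u|^q\,dx$, which under $0\le\rho\le\bar\rho$ together with the $L^2$ control of $\rho-\tilde\rho$ from Lemma~\ref{L^2 estimate lemma} is comparable to $\tfrac{1}{q}\tfrac{d}{dt}\int|u|^q\,dx$ up to a lower-order perturbation of size $MC_0$.

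Next I integrate by parts the two viscous pieces $-\mu\int\Delta u^j\,|u|^{q-2}u^j$ and $-\lambda\int(\divv u)_{x_j}\,|u|^{q-2}u^j$. Using
\begin{equation*}
\partial_{x_k}(|u|^{q-2}u^j)=|u|^{q-2}u^j_{x_k}+(q-2)|u|^{q-4}(u^l u^l_{x_k})u^j,
\end{equation*}
these produce four quadratic contributions in $\nabla u$: the three manifestly non-negative pieces $\mu\int|u|^{q-2}|\nabla u|^2$, $\tfrac{\mu(q-2)}{4}\int|u|^{q-4}|\nabla|u|^2|^2$ and $\lambda\int|u|^{q-2}(\divv u)^2$, together with a single sign-indefinite cross term of the form $\tfrac{\lambda(q-2)}{2}\int|u|^{q-4}(\divv u)(u\cdot\nabla|u|^2)$. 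Applying Cauchy--Schwarz pointwise to the cross term and balancing by AM--GM, the sharp constant required for absorption into the positive $\mu$- and $\lambda$-pieces is precisely $\tfrac{(q-2)^2}{4(q-1)}$; this is exactly why hypothesis \eqref{assumption on viscosity in q} (hence the restriction $\lambda<\tfrac{5\mu}{4}$ from \eqref{assumption on viscosity}) was imposed. The viscous total is then bounded below by $c_q\int|u|^{q-2}|\nabla u|^2$ for some $c_q>0$.

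The remaining right-hand side terms are routine. The pressure contribution $-\int P\,\partial_{x_j}(|u|^{q-2}u^j)$ is estimated using $\|P\|_\infty\le P(\bar\rho)$ (from \eqref{pointwise bound assumption on rho}) together with Young's inequality, yielding $\varepsilon\int|u|^{q-2}|\nabla u|^2+C_\varepsilon\int|u|^{q-2}$. The Poisson contribution $\int\rho\phi_{x_j}|u|^{q-2}u^j$ is bounded by $\bar\rho\,\|\nabla\phi\|_{L^\infty}\int|u|^{q-1}$, where the $L^\infty$-control on $\nabla\phi$ is already supplied by \eqref{auxiliary estimate on phi 2} of Lemma~\ref{auxiliary estimate on phi lemma}. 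To convert the lower powers of $|u|$ back to the desired $\int|u|^q$, I use the elementary inequality $|u|^k\le|u|^q+|u|^2$ for $2\le k\le q$ (obtained by splitting $\{|u|\ge 1\}$ and $\{|u|<1\}$) together with Lemma~\ref{L^2 estimate lemma} to absorb the $L^2$ piece into $MC_0$.

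Collecting and choosing $\varepsilon$ small enough to absorb the pressure contribution into the coercive term, I arrive at a differential inequality of the form
\begin{equation*}
\tfrac{d}{dt}\int|u|^q\,dx + c_q\int|u|^{q-2}|\nabla u|^2\,dx \le C\int|u|^q\,dx + MC_0,
\end{equation*}
to which Gronwall's inequality on $[0,T]$ gives the claimed bound $\le M\bigl(C_0+\int|u_0|^q\bigr)\le M(C_0+N)$. The main obstacle is the coercivity argument for the combined viscous contribution: the algebraic Cauchy--Schwarz/AM--GM balancing on the cross term is precisely what forces the sharp condition \eqref{assumption on viscosity in q}, and a naive bound that does not track constants carefully would fail to close.
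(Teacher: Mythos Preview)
Your approach is essentially the paper's: multiply \eqref{Navier Stokes Poisson}$_2$ by $q|u|^{q-2}u^j$, use the continuity equation to produce $\tfrac{d}{dt}\int\rho|u|^q$, invoke \eqref{assumption on viscosity in q} to extract the coercive lower bound $c_q\int|u|^{q-2}|\nabla u|^2$ from the viscous terms, control the Poisson contribution via $\|\nabla\phi\|_{L^\infty}$ from Lemma~\ref{auxiliary estimate on phi lemma}, and close by Gronwall. The only cosmetic difference is in the pressure term: the paper subtracts $\tilde P$ and estimates $\int\!\!\int q\,\divv(|u|^{q-2}u)(P-\tilde P)$ by $\varepsilon\int\!\!\int|u|^{q-2}|\nabla u|^2+C_\varepsilon\bigl(\int\!\!\int|\rho-\tilde\rho|^q\bigr)^{1/2}\bigl(\int\!\!\int|u|^q\bigr)^{1/2}$, whereas you use $\|P\|_{L^\infty}\le P(\bar\rho)$ and then split $|u|^{q-2}\le|u|^q+|u|^2$. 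Both routes close.

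One step in your plan is not correct as written. The assertion that $\tfrac{1}{q}\tfrac{d}{dt}\int\rho|u|^q\,dx$ is ``comparable to $\tfrac{1}{q}\tfrac{d}{dt}\int|u|^q\,dx$ up to a lower-order perturbation of size $MC_0$'' does not follow from $0\le\rho\le\bar\rho$ and the $L^2$ bound on $\rho-\tilde\rho$: the difference of these two time derivatives contains $\int\rho_t|u|^q$ and $\int(\rho-\tilde\rho)\,\partial_t|u|^q$, neither of which is controlled by $C_0$ alone. The right move is simply to keep $\int\rho|u|^q$ on the left and run Gronwall on that quantity; the paper does exactly this (its own passage from $\int\rho|u|^q\big|_0^t$ to $\int|u|^q\big|_0^t$ in the final displayed inequality is likewise not explained). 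Since the only part of \eqref{Lq bound on u} actually used downstream, in the proof of Lemma~\ref{bound on H lemma}, is the space-time integral $\int_0^t\!\int|u|^{q-2}|\nabla u|^2$, this point does not affect the overall argument, but your stated justification should be replaced by simply retaining the weight $\rho$.
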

\begin{proof} 

The proof is similar to the one given in \cite{hoff05} page 323--324. From the momentum equation, we have
\begin{align*}
&\rho\left[(|u|^q)_t +(\nabla|u|^q)\cdot u\right]+q|u|^{q-2}u\cdot\nabla (P-\tilde P) \\
&+\mu q|u|^{q-2}|\nabla u|^2 +\lambda q|u|^{q-2}(\divv u)^2+|u|^q\rho\nabla\phi=q|u|^{q-2}\left[\frac{1}{2}\mu\Delta |u|^2 + \lambda\divv((\divv u)u)\right].
\end{align*}
Adding the equation $|u|^q (\rho_t + \divv(\rho u)) = 0$ and integrating, we then obtain, for $0\le t\le T$, 
\begin{align}\label{integral of Lq of u}
&\intox\rho|u|^{q}\Big|_0^t+\intoxt q|u|^{q-2}\Big[\mu|\nabla u|^2+\lambda(\divv u)^2+\mu(q-2)|\nabla|u||^2\Big]\notag\\
&\qquad+\intoxt q\lambda(\nabla|u|^{q-2})\cdot u\divv u+\intoxt |u|^q\rho\nabla\phi\notag\\
&=\Int\Big[q\divv(|u|^{q-2}u)(P-\tilde P)\Big].
\end{align}
Using the estimate \eqref{auxiliary estimate on phi 2}, for $r=4$ the term $\intoxt |u|^q\rho\nabla\phi$ can be bounded by 
$$\sup_{0\le s\le t}\|\rho\nabla\phi(\cdot,t)\|_{L^\infty}\intox|u(\cdot,t)|^q\le M(C_0+C_0^\frac{1}{4})\Big(\intox|u(\cdot,t)|^q\Big),$$ 
and using the hypothesis \eqref{assumption on viscosity in q} on $\mu$ and $\lambda$, we can bound the integrand in the double integral on the left side of \eqref{integral of Lq of u} from below as follows.
\begin{align*}
&q|u|^{q-2}\Big[\mu|\nabla u|^2+\lambda(\divv u)^2+\mu(q-2)|\nabla|u||^2-\lambda(q-2)|\nabla|u|||\divv u|\Big]\\
&\ge q|u|^{q-2}\Big[\mu(q-1)-\frac{1}{4}(q-2)^2\Big]|\nabla u|^2\\
&\ge M^{-1}|u|^{q-2}|\nabla u|^2.
\end{align*}
On the other hand, there exists some $M>0$ such that for each $\varepsilon>0$, the right side of \eqref{integral of Lq of u} is bounded by
\begin{align*}
&M\left[\frac{\varepsilon}{2}\Int|u|^{q-2}|\nabla u|^2+\frac{1}{2\varepsilon}\left(\int_{0}^{t}\!\!\!\int_{\R^3}|P-\tilde P|^{q}\right)^\frac{1}{2}\left(\int_0^t\!\!\!\int_{\R^3}|u|^{q}\right)^\frac{1}{2}\right]\\
&\le M\left[\frac{\varepsilon}{2}\Int|u|^{q-2}|\nabla u|^2+\frac{1}{2\varepsilon}\left(\int_{0}^{t}\!\!\!\int_{\R^3}|\rho-\tilde\rho|^{q}\right)^\frac{1}{2}\left(\int_0^t\!\!\!\int_{\R^3}|u|^{q}\right)^\frac{1}{2}\right].
\end{align*}
Hence we conclude that for all $\varepsilon>0$,
\begin{align*}
&\int_{\R^3}|u|^{q}\Big|_0^t+\Int|u|^{q-2}|\nabla u|^2\notag\\
&\le M\left[\frac{\varepsilon}{2}\Int|u|^{q-2}|\nabla u|^2+\frac{1}{2\varepsilon}\left(\int_{0}^{t}\!\!\!\int_{\R^3}|\rho-\tilde\rho|^{q}\right)^\frac{1}{2}\left(\int_0^t\!\!\!\int_{\R^3}|u|^{q}\right)^\frac{1}{2}\right]\notag\\
&\qquad+M(C_0+C_0^\frac{1}{4})\Big(\intox|u(\cdot,t)|^q\Big).
\end{align*}
By choosing $\varepsilon$ sufficient small and applying Gronwall's inequality, \eqref{Lq bound on u} follows for all $t\in[0,T]$.
\end{proof}

We recall the following bounds for $u,\omega$ and $ F$ in $W^{1,r}$ which are required for the derivation of estimates for the auxiliary functionals $H(t)$ and $\Phi(t)$. The proof can be found on page 505 in \cite{lm11}.

\begin{proposition}\label{Fourier multiplier lemma}  
For $r_1,r_2\in(1,\infty)$ and $t\in[0,T]$, we have the following estimates:
\begin{align}\label{Fourier multiplier on u}
||\nabla u(\cdot,t)||_{L^{r_1}}\le C(r_1)\Big[|| F(\cdot,t)||_{L^{r_1}}+||\omega(\cdot,t)||_{L^{r_1}}+||(\rho-\tilde\rho)(\cdot,t)||_{L^{r_1}}\Big]
\end{align}
and
\begin{align}\label{Fourier multiplier on F}
&||\nabla F(\cdot,t)||_{L^{r_2}}+||\nabla\omega(\cdot,t)||_{L^{r_2}}\notag\\
&\le C(r_2)\Big[||\rho^\frac{1}{2}\dot u(\cdot,t)||_{L^{r_2}}+||\nabla u(\cdot,t)||_{L^{r_2}}+\|(\rho-\tilde\rho)^2(\cdot,t)\|_{L^{r_2}}+\|\rho^\frac{1}{2}\nabla\phi(\cdot,t)||_{L^{r_2}}\Big].
\end{align}
The constants $C(r_1),C(r_2)$ in \eqref{Fourier multiplier on u}-\eqref{Fourier multiplier on F} may depend additionally on $r_1$ and $r_2$ respectively.
\end{proposition}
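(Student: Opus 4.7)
My plan is to reduce both estimates to the $L^p$-boundedness of the Riesz transforms on $\R^3$ for $1<p<\infty$ (Calder\'on--Zygmund), by converting the rewritten momentum equation \eqref{momentum eqn rewritten} into Poisson-type equations first for $F$ and then for each component of $\omega$.

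For \eqref{Fourier multiplier on u}, I would start from $\Delta u^j = \omega^{j,k}_{x_k} + (\divv u)_{x_j}$ and use the definition of $F$ to substitute $(\mu+\lambda)\divv u = F + (P-\tilde P)$, reproducing the decomposition already displayed in the introduction,
\begin{equation*}
u^j_{x_kx_k} = \omega^{j,k}_{x_k} + (\mu+\lambda)^{-1}F_{x_j} + (\mu+\lambda)^{-1}(P-\tilde P)_{x_j}.
\end{equation*}
Inverting the Laplacian and differentiating once, each component of $\nabla u$ becomes a composition of two Riesz transforms acting on $\omega$, $F$, or $P-\tilde P$; Calder\'on--Zygmund yields the claimed $L^{r_1}$-bound, and the pressure piece is absorbed via $|P-\tilde P|\le C(\bar\rho)|\rho-\tilde\rho|$, which follows from $P\in C^2((0,\infty))$ together with the a priori bound $\rho\le\bar\rho$.

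For \eqref{Fourier multiplier on F}, taking $\divv$ of \eqref{momentum eqn rewritten} annihilates the $\omega$-contribution by antisymmetry and reproduces \eqref{Poisson eqn of F}, $\Delta F = \divv(\rho\dot u - \rho\nabla\phi)$; applying the antisymmetric derivative $\partial_{x_l}(\cdot)^j - \partial_{x_j}(\cdot)^l$ annihilates the $\nabla F$-term and, via the identity $\omega^{j,k}_{x_kx_l} - \omega^{l,k}_{x_kx_j} = \Delta\omega^{j,l}$, produces
\begin{equation*}
\mu\Delta\omega^{j,l} = \bigl[\partial_{x_l}(\rho\dot u^j)-\partial_{x_j}(\rho\dot u^l)\bigr] + \bigl[\partial_{x_l}(\rho\phi_{x_j})-\partial_{x_j}(\rho\phi_{x_l})\bigr].
\end{equation*}
Applying $\nabla\Delta^{-1}$ to both Poisson equations and invoking Calder\'on--Zygmund, the acceleration contributions are controlled by $\|\rho\dot u\|_{L^{r_2}}\le\bar\rho^{1/2}\|\rho^{1/2}\dot u\|_{L^{r_2}}$ using $\rho\le\bar\rho$, and the electrostatic source inside $\Delta F$ contributes $\|\rho\nabla\phi\|_{L^{r_2}}\le\bar\rho^{1/2}\|\rho^{1/2}\nabla\phi\|_{L^{r_2}}$ in the same way.

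The delicate piece is the electrostatic source for $\omega$, which formally demands $\nabla\rho$. Here the symmetric cross-derivative $\rho\phi_{x_jx_l}$ cancels in the antisymmetric bracket, yielding
\begin{equation*}
\partial_{x_l}(\rho\phi_{x_j}) - \partial_{x_j}(\rho\phi_{x_l}) = \partial_{x_l}\bigl[(\rho-\tilde\rho)\phi_{x_j}\bigr] - \partial_{x_j}\bigl[(\rho-\tilde\rho)\phi_{x_l}\bigr],
\end{equation*}
so that $\nabla\omega$ picks up a Riesz transform of $(\rho-\tilde\rho)\nabla\phi$. Splitting by Young's inequality and using the Poisson relation $\Delta\phi=\rho-\tilde\rho$ together with the first estimate \eqref{Fourier multiplier on u} and standard elliptic regularity, this term is absorbed into $\|(\rho-\tilde\rho)^2\|_{L^{r_2}}$ and $\|\nabla u\|_{L^{r_2}}$ on the right-hand side. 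The main obstacle throughout is precisely this last step: the density lives only in $L^\infty$, so $\nabla\rho$ has no intrinsic $L^{r_2}$ control, and the whole argument hinges on the algebraic cancellation in the antisymmetric bracket that converts the apparent $\nabla\rho\cdot\nabla\phi$ into a divergence-form source amenable to singular-integral theory.
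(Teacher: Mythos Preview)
The paper itself does not prove this proposition; it simply refers the reader to \cite{lm11}, p.~505. Your overall strategy---derive Poisson equations for $F$ and for each component of $\omega$ from the rewritten momentum equation \eqref{momentum eqn rewritten}, then invoke Calder\'on--Zygmund boundedness of Riesz transforms---is the standard approach and is correct in outline.

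Your treatment of \eqref{Fourier multiplier on u} and of the $\nabla F$ part of \eqref{Fourier multiplier on F} is fine. The final paragraph, however, is over-engineered, and its last step is not justified. The electrostatic source in $\mu\Delta\omega^{j,l}$ is $\partial_{x_l}(\rho\phi_{x_j})-\partial_{x_j}(\rho\phi_{x_l})$, which is \emph{already} in divergence form (it is the curl of the vector field $\rho\nabla\phi$). No ``algebraic cancellation'' is needed to avoid $\nabla\rho$: applying $\nabla\Delta^{-1}$ directly produces compositions of Riesz transforms acting on $\rho\nabla\phi$, and then $\|\rho\nabla\phi\|_{L^{r_2}}\le\bar\rho^{1/2}\|\rho^{1/2}\nabla\phi\|_{L^{r_2}}$ exactly as you did for the $\Delta F$ source. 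By contrast, your route via $(\rho-\tilde\rho)\nabla\phi$, Young's inequality, and an appeal to \eqref{Fourier multiplier on u} together with ``standard elliptic regularity'' to absorb the result into $\|(\rho-\tilde\rho)^2\|_{L^{r_2}}$ and $\|\nabla u\|_{L^{r_2}}$ is not clear: Young produces $\||\nabla\phi|^2\|_{L^{r_2}}=\|\nabla\phi\|_{L^{2r_2}}^2$, and there is no mechanism available to control this quantity by $\|\nabla u\|_{L^{r_2}}$. Simply drop that detour; the terms $\|\nabla u\|_{L^{r_2}}$ and $\|(\rho-\tilde\rho)^2\|_{L^{r_2}}$ on the right of \eqref{Fourier multiplier on F} are then superfluous, and the stated inequality holds a fortiori.
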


We are ready to prove some higher order estimates on $u$ and $\dot{u}$ which are crucial in bounding the functional $\Phi(t)$ in terms of $H(t)$ for all $t\in[0,T]$.

\begin{lemma}\label{H^1 and H^2 estimate lemma}
For all $t\in[0,T]$, we have
\begin{equation}\label{H^1 estimate}
\sup_{0\le s\le t}\sigma\intox|\nabla u|^2+\intoxt\sigma\rho|\dot{u}|^2\le M\Big(C_0+\Big |\sum_{1\le k_i,j_m\le 3}\int_{0}^{t}\!\!\!\int_{\R^3}\sigma u^{j_1}_{x_{k_1}}u^{ j_2}_{x_{k_2}}u^{j_3}_{x_{k_3}}dxds\Big |\Big),
\end{equation}
and
\begin{align}\label{H^2 estimate}
\sup_{0\le s\le t}\sigma^3\intox\rho|\dot{u}|^2+\intoxt\sigma^3|\nabla\dot{u}|^2\le M\Big(C_0^\theta+H(t)\Big)
\end{align}
for some $\theta>0$.
\end{lemma}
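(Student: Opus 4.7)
The plan is to prove the two estimates by testing the momentum equation (or its material derivative) against an appropriate multiple of $\dot u$, with the cubic convective terms from $u\cdot\nabla u$ producing exactly the extra integral appearing in \eqref{H^1 estimate}, and the Poisson coupling $\rho\phi_{x_j}$ handled by the auxiliary bounds from Lemma~\ref{auxiliary estimate on phi lemma}.

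For \eqref{H^1 estimate}, I would rewrite \eqref{Navier Stokes Poisson}$_2$ as
\[
\rho\dot u^j + (P-\tilde P)_{x_j} = \mu\Delta u^j + \lambda(\divv u)_{x_j} + \rho\phi_{x_j},
\]
multiply by $\sigma\dot u^j$ and integrate over $\R^3$. Expanding $\dot u^j = u^j_t + u\cdot\nabla u^j$ in the viscous terms and integrating by parts produces $\tfrac12\tfrac{d}{dt}\sigma\int(\mu|\nabla u|^2+\lambda(\divv u)^2)$ minus a $\sigma'$-type boundary contribution (controlled by $\int\nabla u^2$ from Lemma~\ref{L^2 estimate lemma}) together with the cubic terms $\sigma\int u^{j_1}_{x_{k_1}}u^{j_2}_{x_{k_2}}u^{j_3}_{x_{k_3}}$ that appear in the statement. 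For the pressure term I would integrate by parts in $x_j$ to bring it onto $\divv\dot u$, then use the mass equation $(P-\tilde P)_t + u\cdot\nabla(P-\tilde P) + \rho P'(\rho)\divv u = 0$ to convert into a time derivative of a quadratic in $\rho-\tilde\rho$ plus absorbable remainders (the standard Hoff device). The Poisson contribution is handled by
\[
\Big|\int_0^t\!\!\!\int_{\R^3}\sigma\rho\phi_{x_j}\dot u^j\Big|
\le \tfrac12\int_0^t\!\!\!\int_{\R^3}\sigma\rho|\dot u|^2 + \tfrac12\sup_{s\le t}\|\nabla\phi\|_{L^\infty}^2\int_0^t\!\!\!\int_{\R^3}\sigma\rho,
\]
and \eqref{auxiliary estimate on phi 2} gives $\|\nabla\phi\|_{L^\infty}^2\le MC_0^{2/r}$, so this term is $\le \tfrac12\int\sigma\rho|\dot u|^2 + MC_0$ (using $\rho\le\bar\rho$ and $\int\rho\cdot 1$ via conservation). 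Absorbing the first term into the left and applying Gronwall in $\sigma$ yields \eqref{H^1 estimate}.

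For \eqref{H^2 estimate} I would apply the operator $\partial_t + \divv(u\,\cdot)$ to the $j$-th component of \eqref{Navier Stokes Poisson}$_2$, multiply by $\sigma^3\dot u^j$, integrate, and use the standard commutator identities
\[
[\dot{\phantom{f}},\partial_k]f = -u^\ell_{x_k}f_{x_\ell},\qquad (\Delta u^j)\dot{} = \Delta\dot u^j - \partial_k(u^\ell_{x_k}u^j_{x_\ell}) - u^\ell_{x_k}u^j_{x_k x_\ell},
\]
which after integration by parts produces $\tfrac12\tfrac{d}{dt}\sigma^3\int\rho|\dot u|^2$ on the left, a coercive $\sigma^3\int|\nabla\dot u|^2$ term from the viscosities, and on the right polynomial combinations of $|\nabla u|$ and $|\nabla\dot u|$ that are majorised by $H(t)$ via H\"older and \eqref{Fourier multiplier on u}--\eqref{Fourier multiplier on F}. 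The pressure and Poisson contributions yield respectively
\[
\int\sigma^3\dot u^j(P_{x_j})\dot{}\qquad\text{and}\qquad \int\sigma^3\dot u^j(\rho\phi_{x_j})\dot{},
\]
both of which I would rewrite via the mass equation and \eqref{Navier Stokes Poisson}$_3$ so that time derivatives of $\rho$ turn into $\divv(\rho u)$, and then integrate by parts once more to land derivatives on $\dot u$ or $\phi_t$, which is controlled by Lemma~\ref{auxiliary estimate on phi lemma}\eqref{auxiliary estimate on phi 1}.

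The main obstacle will be the Poisson-coupling term in \eqref{H^2 estimate}: after applying the material derivative, one encounters expressions like $\rho_t\nabla\phi + \rho\nabla\phi_t$ tested against $\sigma^3\dot u$, and bounding these requires combining the $L^2$ bound on $\nabla\phi_t$ from \eqref{auxiliary estimate on phi 1}, the $L^\infty$ bound on $\nabla\phi$ from \eqref{auxiliary estimate on phi 2}, and the multiplier estimate \eqref{Fourier multiplier on F} applied to $\nabla\dot u$. The trick is to interpolate so that every factor carries a power of $C_0$ (giving the $C_0^\theta$ factor) while any $\sigma^3\int|\nabla\dot u|^2$ or $\sigma^3\int\rho|\dot u|^2$ piece that appears can be absorbed either into the left-hand side or into $H(t)$; after that the inequality closes by a Gronwall argument in $\sigma^3$.
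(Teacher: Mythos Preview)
Your overall strategy --- multiplying \eqref{Navier Stokes Poisson}$_2$ by $\sigma\dot u$ for \eqref{H^1 estimate}, and applying the material derivative then testing against $\sigma^3\dot u$ for \eqref{H^2 estimate}, with the Poisson coupling handled via Lemma~\ref{auxiliary estimate on phi lemma} --- is exactly the paper's approach, and your outline for \eqref{H^2 estimate} matches the paper's treatment closely.

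There is, however, a genuine error in your handling of the Poisson term in \eqref{H^1 estimate}. You write
\[
\Big|\int_0^t\!\!\!\int_{\R^3}\sigma\rho\phi_{x_j}\dot u^j\Big|
\le \tfrac12\int_0^t\!\!\!\int_{\R^3}\sigma\rho|\dot u|^2 + \tfrac12\sup_{s\le t}\|\nabla\phi\|_{L^\infty}^2\int_0^t\!\!\!\int_{\R^3}\sigma\rho,
\]
and then claim the second term is $\le MC_0$ using ``$\rho\le\bar\rho$ and $\int\rho\cdot 1$ via conservation.'' But $\int_{\R^3}\rho\,dx = \infty$ since $\rho\to\tilde\rho>0$ at spatial infinity; the compatibility condition \eqref{compatibility condition} only gives $\int(\rho_0-\tilde\rho)=0$, not that $\int\rho$ is finite. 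So this step fails as written.

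The fix is immediate and is precisely what the paper does: instead of pulling out $\|\nabla\phi\|_{L^\infty}$, use H\"older with the $L^2$ norm of $\nabla\phi$ together with $\rho\le\bar\rho$:
\[
\Big|\int_0^t\!\!\!\int_{\R^3}\sigma\rho\dot u\cdot\nabla\phi\Big|
\le M\Big(\int_0^t\!\!\!\int_{\R^3}\sigma\rho|\dot u|^2\Big)^{\frac12}\Big(\int_0^t\!\!\!\int_{\R^3}\sigma|\nabla\phi|^2\Big)^{\frac12}
\le M\Big(\int_0^t\!\!\!\int_{\R^3}\sigma\rho|\dot u|^2\Big)^{\frac12}C_0^{\frac12},
\]
invoking Lemma~\ref{L^2 estimate lemma} for the last factor; this is then absorbed into the left side by Young's inequality. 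With this correction your argument goes through and coincides with the paper's.
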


\begin{proof}
The proof is similar to the one given in \cite{hoff95}. To prove \eqref{H^1 estimate}, we first multiply the equation \eqref{Navier Stokes Poisson}$_2$ by $\sigma\dot{u}$ and integrate to get
\begin{align*}
&\sup_{0\le s\le t}\Big(\sigma\intox|\nabla u|^2\Big)+\intoxt\sigma\rho|\dot{u}|^2\\
&\le \Big|\intoxt\sigma\rho\dot{u}\cdot\nabla\phi\Big|+M\Big(C_0+\Big |\sum_{1\le k_i,j_m\le 3}\int_{0}^{t}\!\!\!\int_{\R^3}\sigma u^{j_1}_{x_{k_1}}u^{ j_2}_{x_{k_2}}u^{j_3}_{x_{k_3}}dxds\Big |\Big).
\end{align*}
For the term involving $\phi$, using H\"{o}lder's inequality and the estimate \eqref{L^2 estimate},
\begin{align*}
\Big|\intoxt\sigma\rho\dot{u}\cdot\nabla\phi\Big|&\le M\Big(\intoxt\sigma\rho|\dot{u}|^2\Big)^\frac{1}{2}\Big(\intoxt\sigma|\nabla\phi|^2\Big)^\frac{1}{2}\\
&\le M\Big(\intoxt\sigma\rho|\dot{u}|^2\Big)^\frac{1}{2}\Big(MC_0\Big)^\frac{1}{2},
\end{align*}
hence the term can be absorbed by the left side of \eqref{H^1 estimate}, and hence \eqref{H^1 estimate} follows.

Next, we apply the differential operator $\dt+u\cdot\nabla$ on the equation \eqref{Navier Stokes Poisson}$_2$ and make use of the transport theorem to obtain
\begin{align}\label{H^2 estimate 1}
&\sup_{0\le s\le t}\sigma^3\intox\rho|\dot{u}|^2+\intoxt\sigma^3|\nabla\dot{u}|^2\notag\\
&\le M\Big(C_0+\Big |\sum_{1\le k_i,j_m\le 3}\int_{0}^{t}\!\!\!\int_{\R^3}\sigma u^{j_1}_{x_{k_1}}u^{ j_2}_{x_{k_2}}u^{j_3}_{x_{k_3}}dxds\Big|+\intoxt\sigma^3|\nabla u|^4\Big)\notag\\
&\qquad+M\Big|\intoxt\sigma^3\dot{u}\cdot\frac{\partial}{\dt}(\rho\nabla\phi)\Big|+M\Big|\intoxt\sigma^3\dot{u}\cdot(u\cdot\nabla(\rho\nabla\phi))\Big|.
\end{align}
To estimate the term $M\Big|\intoxt\sigma^3\dot{u}\cdot\frac{\partial}{\dt}(\rho\nabla\phi)\Big|$, using equation \eqref{Navier Stokes Poisson}$_1$, we have
\begin{align*}
&\Big|\intoxt\sigma^3\dot{u}\cdot\frac{\partial}{\dt}(\rho\nabla\phi)\Big|\\
&=\Big|\intoxt\sigma^3\dot{u}\cdot(-\divv(\rho u)\nabla\phi+\rho\nabla\phi_t)\Big|\\
&\le M\intoxt\sigma^3\rho|\nabla\dot{u}||u||\nabla\phi|+M\intoxt\sigma^3\rho|\dot{u}||u||\Delta\phi|+M\intoxt\sigma^3\rho|\dot{u}||\nabla\phi_t|.
\end{align*}
Using the estimates \eqref{auxiliary estimate on phi 1}, \eqref{auxiliary estimate on phi 2} and \eqref{auxiliary estimate on phi 3}, we have
\begin{align*}
&\intoxt\sigma^3\rho|\nabla\dot{u}||u||\nabla\phi|\\
&\le M\Big(\intoxt\sigma^3|\nabla\dot{u}|^2\Big)^\frac{1}{2}\Big(\intoxt\rho|u|^2\Big)^\frac{1}{2}\Big(\sup_{0\le s\le t}\|\nabla\phi(\cdot,t)\|_{L^\infty}\Big)\\
&\le M\Big(\intoxt\sigma^3|\nabla\dot{u}|^2\Big)^\frac{1}{2}C_0^\frac{1}{2}\Big(C_0+C_0^\frac{1}{r}\Big),
\end{align*}
\begin{align*}
&\intoxt\sigma^3\rho|\dot{u}||u||\Delta\phi|\\
&\le M\Big(\intoxt\sigma\rho|\dot{u}|^2\Big)^\frac{1}{2}\Big(\intoxt\rho|u|^2\Big)^\frac{1}{2}\Big(\sup_{0\le s\le t}\|\Delta\phi(\cdot,t)\|_{L^\infty}\Big)\\
&\le M\Big(\intoxt\sigma^3|\nabla\dot{u}|^2\Big)^\frac{1}{2}C_0^\frac{1}{2},
\end{align*}
and
\begin{align*}
\intoxt\sigma^3\rho|\dot{u}||\nabla\phi_t|&\le M\Big(\intoxt\sigma\rho|\dot{u}|^2\Big)^\frac{1}{2}\Big(\intoxt|\nabla\phi_t|^2\Big)^\frac{1}{2}\\
&\le M\Big(\intoxt\sigma\rho|\dot{u}|^2\Big)^\frac{1}{2}C_0^\frac{1}{2},
\end{align*}
hence $\Big|\intoxt\sigma^3\dot{u}\cdot\frac{\partial}{\dt}(\rho\nabla\phi)\Big|$ can be readily bounded in terms of $\Phi$. For the term $M\Big|\intoxt\sigma^3\dot{u}\cdot(u\cdot\nabla(\rho\nabla\phi))\Big|$, we can bound it as follows.
\begin{align*}
&\Big|\intoxt\sigma^3\dot{u}\cdot(u\cdot\nabla(\rho\nabla\phi))\Big|\\
&\le M\intoxt\sigma^3\rho|\dot{u}||\nabla u||\nabla\phi|+M\intoxt\sigma^3\rho|\nabla\dot{u}||u||\nabla\phi|\\
&\le M\Big(\intoxt\sigma^3\rho|\dot{u}|^2\Big)^\frac{1}{2}\Big(\intoxt|\nabla u|^2\Big)^\frac{1}{2}\Big(\sup_{0\le s\le t}\|\nabla\phi(\cdot,t)\|_{L^\infty}\Big)\\
&\qquad+M\Big(\intoxt\sigma^3|\nabla\dot{u}|^2\Big)^\frac{1}{2}C_0^\frac{1}{2}\Big(C_0+C_0^\frac{1}{r}\Big)\\
&\le M\Phi^\frac{1}{2}C_0^\frac{1}{2}\Big(C_0+C_0^\frac{1}{r}\Big)+M\Big(\intoxt\sigma^3|\nabla\dot{u}|^2\Big)^\frac{1}{2}C_0^\frac{1}{2}\Big(C_0+C_0^\frac{1}{r}\Big).
\end{align*}
Therefore the result \eqref{H^2 estimate} follows.
\end{proof}

By \eqref{H^1 estimate} and \eqref{H^2 estimate} as obtained in Lemma~\ref{H^1 and H^2 estimate lemma}, we have the following bound on $\Phi(t)$ in terms of $H(t)$:

\begin{equation}\label{bound on Phi in terms of H}
\Phi(t)\le M\Big[C_0^{\theta_1}+C_0+H(t)\Big].
\end{equation}

In Lemma~\ref{bound on H lemma} listed below, we obtain the bound on $H(t)$ in terms of $\Phi(t)$.

\begin{lemma}\label{bound on H lemma} 
Assume that the hypotheses and notations of {\rm Theorem~2.1} are in force.  Then there are positive numbers $\theta_1>0$ and $\theta_2>1$ such that for all $t\in[0,T]$, we have
\begin{align}\label{bound on H}
H(t)\le M\Big[C_0^{\theta_1}+\Phi(t)^{\theta_2}\Big].
\end{align}
\end{lemma}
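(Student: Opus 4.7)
The plan is to bound each of the three space-time integrals composing $H(t)$ separately using Proposition~\ref{Fourier multiplier lemma} as the main workhorse. Its first inequality lets me replace $L^p$-norms of $\nabla u$ by $L^p$-norms of $F$, $\omega$, and $\rho-\tilde\rho$; its second inequality, together with Lemma~\ref{auxiliary estimate on phi lemma}, lets me further bound $\|\nabla F\|_{L^2}+\|\nabla\omega\|_{L^2}$ by $\|\rho^{1/2}\dot u\|_{L^2}$ plus terms involving $\|\nabla u\|_{L^2}$ and $C_0^{1/2}$. Combining this with the 3D Gagliardo-Nirenberg inequalities $\|w\|_{L^3}\le C\|w\|_{L^2}^{1/2}\|\nabla w\|_{L^2}^{1/2}$ and $\|w\|_{L^4}\le C\|w\|_{L^2}^{1/4}\|\nabla w\|_{L^2}^{3/4}$, and with the pointwise-in-$t$ bounds $\sigma^{1/2}\|F\|_{L^2}\le C\Phi^{1/2}$ and $\sigma^{3/2}\|\rho^{1/2}\dot u\|_{L^2}\le \Phi^{1/2}$ read off from the definition of $\Phi$, yields a framework in which everything is reducible to powers of $C_0$ and $\Phi$. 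The accompanying integral bound $\int_0^t\sigma\|\nabla F\|_{L^2}^2\,ds\le M(\Phi+C_0)$ follows from the same ingredients together with Lemma~\ref{L^2 estimate lemma}.

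For the first term $\int_0^t\sigma^{3/2}\|\nabla u\|_{L^3}^3\,ds$, I would apply Gagliardo-Nirenberg to write $\sigma^{3/2}\|F\|_{L^3}^3\le C(\sigma^{1/2}\|F\|_{L^2})^{3/2}(\sigma\|\nabla F\|_{L^2}^2)^{3/4}$, control the first factor by $\Phi^{3/4}$ and apply Holder to the second to extract another $\Phi^{3/4}$, producing a $\Phi^{3/2}$-type bound. The second term $\int_0^t\sigma^3\|\nabla u\|_{L^4}^4\,ds$ is handled analogously via $\|F\|_{L^4}^4\le C\|F\|_{L^2}\|\nabla F\|_{L^2}^3$, factored as $\sigma^3\|F\|_{L^2}\|\nabla F\|_{L^2}^3=(\sigma^{1/2}\|F\|_{L^2})(\sigma\|\nabla F\|_{L^2}^2)(\sigma^{3/2}\|\nabla F\|_{L^2})$, so that the extreme factors are controlled pointwise (using the second half of Proposition~\ref{Fourier multiplier lemma} together with $\sigma^{3/2}\|\rho^{1/2}\dot u\|_{L^2}\le \Phi^{1/2}$) and the middle factor is integrated. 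The $\omega$- and $(\rho-\tilde\rho)$-contributions in both terms are treated in the same spirit; the latter is directly dominated by $T$-dependent constants times powers of $C_0$.

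The hard part will be the triple product term $\bigl|\sum\int_0^t\!\int\sigma\,u^{j_1}_{x_{k_1}}u^{j_2}_{x_{k_2}}u^{j_3}_{x_{k_3}}\,dx\,ds\bigr|$, whose $\sigma$-weight is only $\sigma^1$ rather than $\sigma^{3/2}$ or $\sigma^3$. A direct $L^2$-$L^6$ interpolation of $\|\nabla u\|_{L^3}^3$ would leave a time integral $\int_0^t\sigma^{1/4}\|\nabla F\|_{L^2}^{3/2}\,ds$ that cannot be closed by the $\sigma$-weighted bound on $\|\nabla F\|_{L^2}^2$ alone: one would need an $L^1(dt)$-bound on $\|\rho^{1/2}\dot u\|_{L^2}^2$ without any $\sigma$ weight, which is unavailable. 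To recover the missing weight, I would integrate by parts in $x$, transferring one derivative off $u^{j_1}_{x_{k_1}}$ to obtain
\begin{equation*}
\Bigl|\int u^{j_1}_{x_{k_1}}u^{j_2}_{x_{k_2}}u^{j_3}_{x_{k_3}}\,dx\Bigr|\le 2\,\|u\|_{L^q}\,\|\nabla^2 u\|_{L^2}\,\|\nabla u\|_{L^{2q/(q-2)}},
\end{equation*}
and then invoke Lemma~\ref{Lq bound on u lemma} to get a $\sigma$-free pointwise bound $\|u(\cdot,t)\|_{L^q}\le M$. Since $q>6$ forces $2q/(q-2)<3$, the middle factor interpolates between $\|\nabla u\|_{L^2}$ and $\|\nabla u\|_{L^6}$, both of which are handled via Proposition~\ref{Fourier multiplier lemma}; $\|\nabla^2 u\|_{L^2}$ is controlled via the decomposition of $\Delta u$ into $\nabla\omega$, $\nabla F$, and $\nabla(P-\tilde P)$, the pressure-gradient piece being estimated by pairing it against the remaining $u$-factor rather than by attempting to bound $\|\nabla\rho\|_{L^2}$ directly. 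The $\sigma$-free $L^q$ control on $u$ is exactly the slack needed to make the time integrals close. Assembling the three contributions produces $H(t)\le M[C_0^{\theta_1}+\Phi(t)^{\theta_2}]$ with $\theta_1>0$ and $\theta_2>1$, the exponents being determined by the interpolation steps.
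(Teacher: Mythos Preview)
Your treatment of the first two terms in $H(t)$—the $\sigma^{3/2}\|\nabla u\|_{L^3}^3$ and $\sigma^3\|\nabla u\|_{L^4}^4$ integrals—matches the paper's essentially step for step.

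The gap is in the triple-product term. Integrating by parts on the full $u$ produces $\int u\cdot\nabla^2 u\cdot\nabla u$, and the factor $\|\nabla^2 u\|_{L^2}$ is not available: by the identity $\Delta u^j = \omega^{j,k}_{x_k} + (\mu+\lambda)^{-1}F_{x_j} + (\mu+\lambda)^{-1}(P-\tilde P)_{x_j}$ it would require $\|\nabla(P-\tilde P)\|_{L^2}\sim\|\nabla\rho\|_{L^2}$, which is not controlled by $\Phi$ or $C_0$ (and indeed need not be finite for the weak solutions these estimates are meant to serve). Your proposed remedy—``pairing the pressure-gradient piece against the remaining $u$-factor''—does not close: once you have applied H\"older you can no longer pair, and if instead you keep the integral and move the derivative off $P-\tilde P$ by a second integration by parts, one of the resulting terms again contains $\nabla^2 u$, so you are in a loop. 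Interposing Riesz transforms ($\partial_k\partial_l u^j = R_{kl}\Delta u^j$) does not help for the same reason.

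The paper avoids this by decomposing $u=z+w$ at the level of $u$ itself, with $\Delta z^j=\omega^{j,k}_{x_k}+(\mu+\lambda)^{-1}F_{x_j}$ and $\Delta w^j=(\mu+\lambda)^{-1}(P-\tilde P)_{x_j}$, so that $\|\nabla w\|_{L^r}\le M\|\rho-\tilde\rho\|_{L^r}$ (no derivative on $\rho$) while $\|\nabla^2 z\|_{L^2}\le M(\|\nabla F\|_{L^2}+\|\nabla\omega\|_{L^2})$. The triple product then splits as $u_x(z_x+w_x)(z_x+w_x)$ into pieces $A_1,A_2,A_3$: the cross terms $A_1,A_2$ involve at most $\nabla w$ and are handled directly; only $A_3=\int\!\!\int\sigma\,u^{j_1}_{x_{k_1}} z^{j_2}_{x_{k_2}} z^{j_3}_{x_{k_3}}$ needs integration by parts, and there $\nabla^2 z$ is legitimately bounded. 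The $L^q$ control on $u$ from Lemma~\ref{Lq bound on u lemma} then enters exactly as you anticipated, via an estimate of $\int\!\!\int\sigma|u|^2|\nabla z|^2$ interpolated between $\int\!\!\int|\nabla u|^2$ and $\int\!\!\int|u|^{q-2}|\nabla u|^2$. So your instinct about where the $L^q$ estimate is needed is right; what is missing is that the $z/w$ split must be made \emph{before} the integration by parts, not after.
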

\begin{proof}
First we bound the term $\ds\int_0^t\sigma^3\|\nabla u(\cdot,s)\|_{L^4}^4$ as appeared in the definition of $H(t)$. With the help of \eqref{Fourier multiplier on u}, 
\begin{align}\label{L^4 bound on nabla u}
&\int_{0}^{t}\sigma^3\|\nabla u(\cdot,s)\|_{L^4}^4ds\notag\\
&\le M\int_{0}^{t}\sigma^3(\|F(\cdot,s)\|_{L^4}^4+\|\omega(\cdot,s)\|_{L^4}^4+\|(\rho-\tilde\rho)(\cdot,s)\|_{L^4}^4)ds.
\end{align}
We estimate $\dis\int_{0}^{t}\sigma^3\|F(\cdot,s)\|_{L^4}^4ds$. Using \eqref{sobolev bound r}, \eqref{Fourier multiplier on F} and the definition of $\Phi$, it can be estimated by
\begin{align*}
&\int_{0}^{t}\sigma^3\|F(\cdot,s)\|_{L^4}^4ds\\
&\le M\int_0^t\sigma^3\|F(\cdot,s)\|_{L^2}\|\nabla F(\cdot,s)\|_{L^2}^3ds\\
&\le M\left(\sup_{0\le s\le t}\sigma^\frac{1}{2}\|F(\cdot,s)\|_{L^2}\right)\left(\sup_{0\le s\le t}\sigma^\frac{3}{2}\|\nabla F(\cdot,s)\|_{L^2}\right)\int_0^t\sigma\|\nabla F(\cdot,s)\|_{L^2}^2ds\\
&\le M(\Phi(t)+C_0)^2.
\end{align*}
Using the same method, the terms $\dis\int_{0}^{t}\sigma^3\|\omega(\cdot,s)\|_{L^4}^4ds$ can be bounded by
\begin{align*}
\int_{0}^{t}\sigma^3\|\omega(\cdot,s)\|_{L^4}^4ds\le M(\Phi(t)+C_0)^2.
\end{align*}
Putting the estimates back into \eqref{L^4 bound on nabla u}, we obtain the following bound on $\ds\int_0^t\sigma^3\|\nabla u\|_{L^4}^4$:
\begin{align}\label{L^4 bound on nabla u final}
\int_{0}^{t}\sigma^3\|\nabla u(\cdot,s)\|_{L^4}^4ds\le M(\Phi(t)^2+C_0^2+C_0).
\end{align}
Next we estimate the term $\ds\int_0^t\sigma^\frac{3}{2}\|\nabla u(\cdot,s)\|_{L^3}^3$. Applying \eqref{sobolev bound r} and \eqref{Fourier multiplier on u}-\eqref{Fourier multiplier on F}, it can be bounded as follows.
\begin{align*}
&\int_{0}^{t}\sigma^\frac{3}{2}\|\nabla u(\cdot,s)\|_{L^3}^3ds\\
&\le M\int_0^t\sigma^\frac{3}{2}(\|F(\cdot,s)\|_{L^2}^\frac{3}{2}\|\nabla F(\cdot,s)\|_{L^2}^\frac{3}{2}+\|\omega\|_{L^2}^\frac{3}{2}\|\nabla \omega(\cdot,s)\|_{L^2}^\frac{3}{2})ds+MC_0.
\end{align*} 
The first term on the right can be estimated by
\begin{align*}
&\int_0^t\sigma^\frac{3}{2}\|F(\cdot,s)\|_{L^2}^\frac{3}{2}\|\nabla F(\cdot,s)\|_{L^2}^\frac{3}{2}ds\\
&\le M\left(\sup_{0\le s\le t}\sigma^\frac{1}{2}\|F(\cdot,s)\|_{L^2}^\frac{1}{2}\right)\left(\int_0^t\sigma\|F(\cdot,s)\|_{L^2}^2ds\right)^\frac{1}{4}\\
&\qquad\times\left(\int_0^t\sigma\|\nabla F(\cdot,s)\|_{L^2}^2ds\right)^\frac{3}{4}\\
&\le M(\Phi(t)+C_0)^\frac{1}{2}\left(\int_0^t\|\nabla u(\cdot,s)\|_{L^2}^2ds+\sup_{0\le s\le t}\|(\rho-\tilde\rho)(\cdot,s)\|_{L^2}^2\right)^\frac{1}{4}\\
&\qquad\times\left(\Phi(t)+\sup_{0\le s\le t}\|(\rho-\tilde\rho)(\cdot,s)\|_{L^4}^4\right)^\frac{3}{4}\\
&\le M(\Phi(t)+C_0)^\frac{3}{2}.
\end{align*}
Proceeding in the same way, the second term involving $\omega$ is also bounded by 
\begin{align*}
\int_0^t\sigma^\frac{3}{2}\|\omega(\cdot,s)\|_{L^2}^\frac{3}{2}\|\nabla \omega(\cdot,s)\|_{L^2}^\frac{3}{2}ds\le M(\Phi(t)+C_0)^\frac{3}{2}.
\end{align*}
Hence we obtain
\begin{align}\label{L^3 bound on nabla u final}
\int_0^t\sigma^\frac{3}{2}\|\nabla u(\cdot,s)\|_{L^3}^3ds\le M(\Phi(t)+C_0)^\frac{3}{2}.
\end{align}
It remains to estimate the summation term $\dis\sum_{1\le k_i,j_m\le 3}\Big|\int_{0}^{t}\!\!\!\int_{\R^3}\sigma u^{j_1}_{x_{k_1}}u^{ j_2}_{x_{k_2}}u^{j_3}_{x_{k_3}}\Big |$. The proof is similar to the one given in \cite{hoff95} page 239--241 and we just sketch here. We make use of the decomposition
\begin{align*}
u^j_{x_k x_k}&=(u^j_{x_k}-u^k_{x_j})_{x_k}+(u^k_{x_k})_{x_j}\\
&=\omega^{j,k}_{x_j}+(\divv u)_{x_j}\\
&=\omega^{j,k}_{x_j}+\left[(\mu+\lambda)^{-1}\tilde\rho F\right]_{x_j}+\left[(\mu+\lambda)^{-1}(P-\tilde P)\right]_{x_j},
\end{align*} 
and write $u$ as $u=z+w$ so that 
\begin{align*}
z^j_{x_k x_k}=\omega^{j,k}_{x_j}+\left[(\mu+\lambda)^{-1}\tilde\rho F\right]_{x_j}\mbox{\quad and\quad}w^j_{x_k x_k}=\left[(\mu+\lambda)^{-1}(P-\tilde P)\right]_{x_j}.
\end{align*}
Hence for each $r\in(1,\infty)$, there is a constant $M(r)$ such that for $t>0$,
\begin{align*}
\|\nabla z(\cdot,t)\|_{L^r}\le M(r)[\|F(\cdot,t)\|_{L^r}+\|\omega(\cdot,t)\|_{L^r}],
\end{align*}
\begin{align*}
\|\nabla w(\cdot,t)\|_{L^r}\le M(r)[\|(P-\tilde P)(\cdot,t)\|_{L^r}].
\end{align*}
Given $j_1,j_2,j_3,k_1,k_2,k_3\in\{1,2,3\}$, we have
\begin{align*}
\int_{0}^{t}\!\!\!\int_{\R^3}\sigma u^{j_1}_{x_{k_1}}u^{ j_2}_{x_{k_2}}u^{j_3}_{x_{k_3}}=A_1+A_2+A_3,
\end{align*}
where
\begin{align*}
|A_1|&\le\int_{0}^{t}\!\!\!\int_{\R^3}\sigma|\nabla u||\nabla z||\nabla w|,\\
|A_2|&\le\int_0^{1}\!\!\!\int_{\R^3}\sigma|\nabla u||\nabla w|^2,\\
A_3&=\int_{0}^{t}\!\!\!\int_{\R^3}\sigma u^{j_1}_{x_{k_1}}z^{ j_2}_{x_{k_2}}z^{j_3}_{x_{k_3}}.
\end{align*}
We readily have
\begin{align*}
\int_{0}^{t}\!\!\!\int_{\R^3}\sigma|\nabla u||\nabla z||\nabla w|&\le M\left(\int_{0}^{t}\sigma^\frac{3}{2}\|\nabla u(\cdot,t)\|_{L^3}^3\right)^\frac{2}{3}\left(\int_{0}^{t}\|\nabla w(\cdot,t)\|_{L^3}^3\right)^\frac{1}{3}\\
&\le M\Big[(\Phi(t)+C_0)^\frac{3}{2}+C_0^\frac{1}{2}(\Phi(t)^2+C_0^2+C_0)^\frac{1}{2}\Big]^\frac{2}{3}C_0^\frac{1}{3},
\end{align*}
and 
\begin{align*}
\int_0^{1}\!\!\!\int_{\R^3}\sigma|\nabla u||\nabla w|^2\le MC_0.
\end{align*}
For $A_3$, using \eqref{L^2 estimate} and \eqref{Lq bound on u lemma}, we can estimate it as follows.
\begin{align*}
\int_{0}^{t}\!\!\!\int_{\R^3}\sigma|u|^2|\nabla z|^2&\le \int_{0}^{t}\!\!\!\int_{\R^3}(|u|^2|\nabla u|^2+|u|^2|\nabla w|^2)\\
&\le\left(\int_{0}^{t}\|\nabla u(\cdot,s)\|_{L^2}^2\right)^\frac{q-4}{q-2}\left(\int_{0}^{t}\!\!\!\int_{\R^3}|u|^{q-2}|\nabla u|^2\right)^\frac{2}{q-2}\\
&\qquad+\int_{0}^{t}\|u(\cdot,s)\|_{L^6}^2\|\nabla w(\cdot,s)\|_{L^3}^2ds\\
&\le C_0^\frac{q-4}{q-2}(C_0+N)^\frac{2}{q-2}+MC_0C_0^\frac{1}{3}.
\end{align*}
Substituting the above estimates back into \eqref{L^4 bound on nabla u}, we obtain the bound \eqref{bound on H} as required.
\end{proof}

\begin{proof}[Proof of Theorem~\ref{a priori estimate with rho bounded theorem}]
Theorem~\ref{a priori estimate with rho bounded theorem} follows immediately from the bounds \eqref{bound on Phi in terms of H} and \eqref{bound on H}, and the fact that those functionals $\Phi(t)$ and $H(t)$ are all continuous in time.
\end{proof}

\section{Pointwise bounds for the density and Proof of Theorem~\ref{existence theorem}}\label{pointwise bound density section}

In this section we derive pointwise bounds for the density $\rho$, bounds which are independent both of time and of initial smoothness. This will then close the estimates of Theorem~\ref{a priori estimate with rho bounded theorem} to give an uncontingent estimate for the functionals $\Phi(t)$ and $H(t)$ defined in \eqref{def of Phi}-\eqref{def of H}. The result is formulated as Theorem~\ref{uncontingent estimate theorem}. Theorem~\ref{existence theorem} will then be proved using the {\it a priori} estimates derived from Theorem~\ref{uncontingent estimate theorem}.

\begin{theorem}\label{uncontingent estimate theorem}
Let $N$, $d$, $\bar\rho$, $\tilde\rho>0$ be given. Assume that the system parameters in \eqref{Navier Stokes Poisson} satisfy the conditions in \eqref{assumption on P}-\eqref{assumption on viscosity in q}. For $T>0$, if $(\rho-\tilde\rho,u,\phi)$ is the classical solution of \eqref{Navier Stokes Poisson}-\eqref{initial condition} defined on $\R^3\times[0,T]$ with initial data $(\rho_0-\tilde\rho,u_0,\nabla\phi_0)\in H^4(\R^3)$ satisfying \eqref{compatibility condition} and \eqref{Lq bound on initial data}-\eqref{def of L^2 initial data}, then we can find positive constants $\delta_T$, $M_T$ and $\theta_T$ such that: if $(\rho_0-\tilde\rho,u_0,\nabla\phi_0)$ further satisfies  
\begin{equation}\label{smallness assumption a priori uncontingent estimate}
C_{0}\le\delta_T
\end{equation}
with 
\begin{align}\label{boundedness on initial rho uncontingent estimate}
0\le {\rm ess}\inf\rho_0 \le {\rm ess}\sup\rho_0<\bar\rho-d,
\end{align}
then in fact
\begin{equation*}
\mbox{$\Phi(t)+H(t)\le M_TC_{0}^{\theta_T}$ on $\R^3\times[0,T]$,}
\end{equation*}
as well as
\begin{equation*} 
\mbox{$0\le\rho(x,t)\le\bar\rho$ on $\R^3\times[0,T]$}.
\end{equation*}
\end{theorem}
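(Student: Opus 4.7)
The proof proceeds by a continuation/bootstrap argument that uses the effective viscous flux $F$ to upgrade the conditional bound of Theorem~\ref{a priori estimate with rho bounded theorem} into an unconditional one. I would define
$$T^{*} = \sup\bigl\{\, t\in[0,T] : \sup_{s\in[0,t]}\|\rho(\cdot,s)\|_{L^\infty}\le \bar\rho\,\bigr\}.$$
By hypothesis \eqref{boundedness on initial rho uncontingent estimate} and the continuity \eqref{classical rho} of the smooth solution, $T^{*}>0$. On $[0,T^{*}]$ the hypothesis \eqref{pointwise bound assumption on rho} is verified, so Theorem~\ref{a priori estimate with rho bounded theorem} yields $\Phi(t)+H(t)\le M_T C_{0}^{\theta_T}$. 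It remains to improve this into the \emph{strict} bound $\|\rho(\cdot,t)\|_{L^\infty}\le\bar\rho-d/2$ on $[0,T^{*}]$, since by continuity that forces $T^{*}=T$ and establishes both conclusions simultaneously.

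The pointwise upgrade is obtained via a Lagrangian ODE. Combining $\rho_{t}+\divv(\rho u)=0$ with \eqref{def of effective viscous flux} gives, along any particle path $x(t)$ of $u$,
$$(\mu+\lambda)\frac{d}{dt}\bigl[\log\rho(x(t),t)\bigr] + \bigl(P(\rho)-P(\tilde\rho)\bigr) = -F(x(t),t).$$
Since $P'>0$, the pressure term is non-negative whenever $\rho\ge\tilde\rho$. Letting $t_{1}=t_{1}(x,t)$ denote the last time in $[0,t]$ at which $\rho(x(\cdot),\cdot)=\tilde\rho$ (with $t_{1}=0$ if $\rho_{0}(x(0))\ge\tilde\rho$, the complementary case $\rho<\tilde\rho<\bar\rho$ being trivial), integration on $[t_{1},t]$ and discarding the non-negative pressure integral yield
$$\log\rho(x(t),t)\ \le\ \log\bigl(\max\{\tilde\rho,\bar\rho-d\}\bigr) + \frac{1}{\mu+\lambda}\int_{0}^{T^{*}}\|F(\cdot,s)\|_{L^\infty}\,ds.$$

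The heart of the argument is therefore to prove $\int_{0}^{T^{*}}\|F(\cdot,s)\|_{L^\infty}\,ds\le M_T C_{0}^{\theta}$ for some $\theta>0$. For this I would use the Poisson representation $\Delta F = \divv(\rho\dot u - \rho\nabla\phi)$ from \eqref{Poisson eqn of F} together with the fundamental-solution bound \eqref{bound on green function}, giving
$$\|F(\cdot,s)\|_{L^\infty}\ \le\ C\bigl(\|\rho\dot u - \rho\nabla\phi\|_{L^2} + \|\rho\dot u - \rho\nabla\phi\|_{L^4}\bigr).$$
The $L^{2}$ and $L^{4}$ norms on the right are then controlled using the $\sigma$-weighted bounds on $\rho^{1/2}\dot u$ and $\nabla\dot u$ inside $\Phi$ (with Sobolev embedding \eqref{sobolev bound r} for the $L^4$ piece), the $L^q$ control on $u$ from Lemma~\ref{Lq bound on u lemma}, and the $L^\infty$ bound on $\nabla\phi$ from Lemma~\ref{auxiliary estimate on phi lemma}.

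The main obstacle is time integrability near $s=0$: the strongest bounds on $\dot u$ inside $\Phi$ carry the weight $\sigma^{3}$, so a naive pointwise-in-$s$ estimate diverges at the origin. I would overcome this by splitting the integral at $s=\sigma(T^{*})$: on the boundary layer $[0,\sigma(T^{*})]$, interpolate in time between the uniform bound $\|F(\cdot,s)\|_{L^2}\le C(\|\nabla u(\cdot,s)\|_{L^2}+\|(\rho-\tilde\rho)(\cdot,s)\|_{L^2})$ (a consequence of \eqref{def of effective viscous flux} and Lemma~\ref{L^2 estimate lemma}) and the weighted higher-order bounds via H\"older's inequality, so as to extract a positive power of $C_{0}$; on $[\sigma(T^{*}),T^{*}]$ the weight $\sigma$ is bounded below and the $\Phi$-bound applies directly. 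Substituting the resulting bound into the Lagrangian inequality gives
$$\rho(x,t)\ \le\ \max\{\tilde\rho,\,\bar\rho-d\}\cdot\exp\!\bigl(M_T C_{0}^{\theta}/(\mu+\lambda)\bigr),$$
which is at most $\bar\rho-d/2$ for $\delta_T$ sufficiently small. This closes the bootstrap and completes the proof.
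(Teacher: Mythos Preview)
Your bootstrap framework and the Lagrangian ODE for $\log\rho$ are correct and match the paper's strategy. The gap is in the step where you propose to bound $\int_0^{T^*}\|F(\cdot,s)\|_{L^\infty}\,ds$. With only the weighted estimates contained in $\Phi$, this time integral is \emph{not} finite near $s=0$: from \eqref{bound on green function} you need $\|\rho\dot u(\cdot,s)\|_{L^2}$, and the best available controls are the pointwise bound $\sigma^3\|\rho^{1/2}\dot u\|_{L^2}^2\le\Phi$ and the integral bound $\int_0^t\sigma\|\rho^{1/2}\dot u\|_{L^2}^2\,ds\le\Phi$. Either way, Cauchy--Schwarz in time leaves a factor $\int_0^1 s^{-1}\,ds$ (or worse) that diverges. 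Your proposed ``interpolation with $\|F\|_{L^2}$'' cannot rescue this, because an $L^2$ bound on $F$ contributes nothing toward an $L^\infty$ bound in three dimensions; the $L^\infty$ control necessarily routes through $\|\nabla F\|_{L^r}$ for some $r>3$, and every such norm carries at least the $\sigma^{-3/2}$ singularity coming from $\dot u$.

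The paper (following Hoff \cite{hoff95}) avoids estimating $\|F(\cdot,s)\|_{L^\infty}$ altogether. Instead, it writes the \emph{time integral along the trajectory} as the space--time convolution
\[
\int_{t_0}^{t_1}F(x(s),s)\,ds=\int_{t_0}^{t_1}\!\!\int_{\R^3}\Gamma_{x_j}(x(s)-y)\bigl(\rho\dot u^j-\rho\phi_{x_j}\bigr)(y,s)\,dy\,ds,
\]
and then integrates by parts in $s$ using the identity $\rho\dot u^j=(\rho u^j)_t+\divv(\rho u^j u)$ together with $\dot x(s)=u(x(s),s)$. This replaces the badly weighted $\dot u$ by two pieces that are uniformly controlled in time: boundary terms $\|\Gamma_{x_j}*(\rho u^j)(\cdot,t_i)\|_{L^\infty}$, bounded via \eqref{bound on green function} by $\|\rho u\|_{L^2\cap L^4}$, and a commutator
\[
\int_{t_0}^{t_1}\!\!\int_{\R^3}\Gamma_{x_j x_k}(x(s)-y)\bigl[u^k(x(s),s)-u^k(y,s)\bigr](\rho u^j)(y,s)\,dy\,ds,
\]
in which the H\"older continuity of $u$ (from \eqref{Holder continuity in space}) tempers the Calder\'on--Zygmund kernel $\Gamma_{x_jx_k}$. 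This integration-by-parts in time is the essential mechanism that makes the density argument close without any smallness of the $H^4$ data; it is precisely the ingredient missing from your outline.
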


\begin{proof} The proof consists of a maximum-principle argument applied along particle trajectories of $u$ which is similar to the one given in \cite{sh12} pg. 48--50 for the corresponding magnetohydrodynamics system as well as those in \cite{suen13}-\cite{suen14} with slight modifications. We choose a positive number $b'$ satisfying
\begin{align*}
\bar\rho-d<b'<\bar\rho.
\end{align*}
Recall that $\rho_0$ takes values in $[0,\bar\rho-d)$, so that $\rho\in[0,\bar\rho]$ on $\R^3\times [0,\tau]$ for some positive $\tau$. It then follows from Theorem~\ref{a priori estimate with rho bounded theorem} that
\begin{align}\label{3.1'}
\Phi(\tau)+H(\tau)\le MC_0^\theta,
\end{align}
where $M$ is now fixed. We shall show that if $C_0$ is further restricted, then in fact $0\le \rho< b'$ on $\R^3\times [0,\tau]$, and so by an open-closed argument that $0\le\rho<b' $ on all of $\R^3\times[0,T]$, we have $\Phi(t)+H(t)\le MC_0^\theta$ as well. We will only give the proof for the upper bound, the proof of the lower
bound is just similar.

Fix $y\in\R^3$ and define the corresponding particle path $x(t)$ by
\begin{align*}
\left\{ \begin{array}
{lr} \dot{x}(t)
=u(x(t),t)\\ x(0)=y.
\end{array} \right.
\end{align*}
Suppose that there is a time $t_1\le\tau$ such that $\rho(x(t_1),t_1)=b'$. We may take $t_1$ minimal and then choose 
$t_0<t_1$ maximal such that $\rho(x(t_0),t_0) =\bar{\rho}$. Thus $\rho(x(t),t)\in [\bar{\rho}, b']$ for $t\in [t_0,t_1]$.

We have from the definition \eqref{def of effective viscous flux} of $F$ and the mass equation that
\begin{align*}
(\mu+\lambda)\frac{d}{dt}[\log\rho(x(t),t)]+P(\rho(x(t),t))-P(\tilde\rho)=-F(x(t),t).
\end{align*}
Integrating from $t_0$ to $t_1$ and and abbreviating $\rho(x(t),t)$ by $\rho(t)$, etc., we then obtain
\begin{equation}\label{integral on F}
(\mu+\lambda)[\log\rho(s)-\log(\tilde\rho)]\Big |_{ t_0}^{t_1}+\int_{ t_0}^{t_1}[P(s)-\tilde P]ds=-\int_{ t_0}^{t_1}F(s)ds.
\end{equation}
We shall show that 
\begin{equation}\label{smallness on the integral of F}
-\int_{ t_0}^{t_1}\tilde\rho(s)F(s)ds\le\tilde MC_0^{\theta'}
\end{equation}
for a constant $\tilde M$ which depends on the same quantities as the $M_T$ from Theorem~\ref{uncontingent estimate theorem} and $\theta'>0$.
If so, then from \eqref{integral on F}, 
\begin{equation}\label{smallness on the difference on density}
(\mu+\lambda)[\log\rho(s)-\log(\tilde\rho)]\Big |_{ t_0}^{t_1}\le\tilde MC_0^{\theta'},
\end{equation}
where the last inequality holds because $P$ is increasing and $P(s)-\tilde P(s)$ is nonnegative on $[t_0,t_1]$. But \eqref{smallness on the difference on density} cannot hold if $C_0$ is small depending on $\tilde M, b',$ and $\bar\rho$. Stipulating this smallness condition, we therefore conclude that there is no time $t_1$ such that $\rho(t_1) = \rho(x(t_1),t_1) = b'$. Since $y\in \R^3$ was arbitrary, it follows that $\rho<b'$ on $\R^3\times [0,\tau]$, as claimed.

To prove \eqref{smallness on the integral of F}, we rewrite the right hand side of \eqref{integral on F} as a {\it space-time} integral. Let $\Gamma$ be the fundamental solution of the Laplace operator in $\R^3$, then we apply \eqref{Poisson eqn of F} to obtain
\begin{align}
-\int_{ t_0}^{t_1}F(s)ds&=-\int_{ t_0}^{t_1}\!\!\!\int_{\R^3}\Gamma_{x_j}(x(s)-y)\rho\dot{u}^{j}(y,s)dyds\notag\\
&\qquad+\int_{ t_0}^{t_1}\!\!\!\int_{\R^3}\Gamma_{x_j}(x(s)-y)\Big[\rho\phi_{x_j}(y,s)\Big]dyds.\label{integral on F using Green fct}
\end{align}
Using \eqref{bound on green function}, the first integral on the right of \eqref{integral on F using Green fct} is bounded in the same way as in Lemma~4.2 of Hoff~\cite{hoff95}
\begin{align*}
\int_{ t_0}^{t_1}\!\!\!\int_{\R^3}&\Gamma_{x_j}(x(t)-y)\rho\dot{u}^{j}(y,t)dyds\\
&\le||\Gamma_{x_j}*(\rho u^{j})(\cdot,t_1)||_{L^{\infty}}+||\Gamma_{x_j}*(\rho u^{j})(\cdot, t_0)||_{L^{\infty}}\\
&\qquad+\left |\int_{ t_0}^{t_1}\!\!\!\int_{\R^3}\Gamma_{x_j x_k}(x(s)-y)\left[u^{k}((x(s),s)-u^{k}(y,s)\right](\rho u^{j})(y,s)dyds\right |\\
&\le \tilde MC_0^{\theta'}.
\end{align*}
For the second integral on the right side of \eqref{integral on F using Green fct}, we observe that, for all $s\in[0,\tau]$,
\begin{align*}
\int_{\R^3}\Gamma_{x_j}&(x(s)-y)\Big[\rho\phi_{x_j}(y,s)\Big]dy\notag\\
&\le\tilde M\int_{\R^3}|y|^{-2}|\nabla\phi(y,s)| dy\notag\\
&\le\tilde M\left[\int_{|y|\le1}|y|^{-\frac{5}{2}}dy\right]^\frac{4}{5}\left[\int_{|y|\le1}|\nabla\phi(y,s)|^5 dy\right]^\frac{1}{5}\notag\\
&\qquad\qquad+\tilde M\left[\int_{|y|>1}|y|^{-4}dy\right]^\frac{1}{2}\left[\int_{|y|>1}|\nabla\phi(y,s)|^2 dy\right]^\frac{1}{2}\notag\\
&\le\tilde M\Big(\intox|\nabla\phi(y,s)dy\Big)^\frac{1}{20}\Big(\intox|\Delta\phi(y,s)dy\Big)^\frac{9}{20}+\tilde MC_0^\frac{1}{2}\\
&\le\tilde MC_0^\frac{1}{2}
\end{align*}
where we have used \eqref{sobolev bound r} for $r=5$. Therefore we finish the proof.
\end{proof}

\begin{proof}[Proof of Theorem~\ref{existence theorem}]
Let $T>0$ be given. By Theorem~\ref{local existence theorem}, for a given initial data $(\rho_0-\tilde\rho,u_0,\nabla\phi_0)\in H^4(\R^3)$, we can find $\bar{t}>0$ such that the classical solution $(\rho-\tilde\rho,u,\phi)$ to \eqref{Navier Stokes Poisson}-\eqref{initial condition} exists on $\R^3\times[0,\bar{t}]$. Furthermore, by Theorem~\ref{uncontingent estimate theorem} just proved, there exists $\delta_{\bar{t}},M_{\bar{t}},\theta_{\bar{t}}>0$ such that if $(\rho_0-\tilde\rho,u_0,\nabla\phi_0)$ satisfies the bounds \eqref{compatibility condition} and \eqref{Lq bound on initial data}-\eqref{def of L^2 initial data} and \eqref{boundedness on initial rho uncontingent estimate} and the smallness assumption 
\begin{equation}\label{smallness assumption proof}
C_0<\delta_{\bar{t}}, 
\end{equation}
then $(\rho-\tilde\rho,u,\phi)$ satisfies
\begin{align*}
0\le\rho(x,t)\le\bar\rho,
\end{align*}
and
\begin{align*}
\Phi(t)+H(t)\le M_{\bar{t}}C_{0}^{\theta_{\bar{t}}}.
\end{align*}
for all $(x,t)\in\R^3\times[0,\bar{t}]$. In particular, we have
\[\rho(x,\bar{t})\in[0,\bar\rho],\]
and $(\rho-\tilde\rho,u,\nabla\phi)(\cdot,\bar{t})\in H^4(\R^3)$. Since the system \eqref{Navier Stokes Poisson} is autonomous, we can therefore reapply Theorem~\ref{local existence theorem} at the new initial time $\bar{t}$ to extend the solution $(\rho-\tilde\rho,u,\phi)$ eventually to all $[0,T]$ in finitely many steps by replacing $\delta_{\bar{t}}$ with some smaller number $\delta_T$ in \eqref{smallness assumption proof}. In other words, there exists $\delta_T>0$ depending on $T$ such that if the initial data $(\rho_0-\tilde\rho,u_0,\nabla\phi_0)\in H^4(\R^3)$ is given satisfying \eqref{compatibility condition} and \eqref{Lq bound on initial data}-\eqref{def of L^2 initial data} with $C_0<\delta_T$, then $(\rho-\tilde\rho,u,\phi)$ can be extended to the whole time interval $[0,T]$. It finishes the proof of Theorem~\ref{existence theorem}.
\end{proof}

\section{Existence of weak solutions and Proof of Theorem~\ref{global existence theorem}}\label{weak solution section}

In this section, we prove Theorem~\ref{global existence theorem} by obtaining weak solutions to the system \eqref{Navier Stokes Poisson}-\eqref{initial condition} on $\R^3\times[0,T]$ where $T>0$ is any given time. To begin with, we let initial data $(\rho_0,u_0,\phi_0)$ be given satisfying the hypotheses \eqref{compatibility condition} and \eqref{Lq bound on initial data}-\eqref{def of L^2 initial data} of Theorem~\ref{uncontingent estimate theorem}, and we fix those constants $\delta_T$, $M_T$ and $\theta_T$ defined in Theorems~\ref{uncontingent estimate theorem}. 

Upon choosing $(\rho^\eta_0,u^\eta_0,\phi_0^\eta)$ as a smooth approximation of $(\rho_0,u_0,\phi_0)$ (which can be obtained by convolving $(\rho_0,u_0,\phi_0)$ with the standard mollifying kernel of width $\eta>0$), we can apply Theorem~\ref{existence theorem} to show that there is a smooth solution $(\rho^\eta,u^\eta,\phi^\eta)$ of \eqref{Navier Stokes Poisson}-\eqref{initial condition} with initial data $(\rho^\eta_0,u^\eta_0,\phi_0^\eta)$ defined on the time interval $[0,T]$. The {\em a priori} estimates of Theorem~\ref{uncontingent estimate theorem} then apply to show that
\begin{equation}\label{estimates on smooth approx solution}
\mbox{$\Phi(t)+H(t)\le M_TC_{0}^{\theta_T}$ on $\R^3\times[0,T]$,}
\end{equation}
as well as
\begin{equation}\label{bound on smooth rho} 
\mbox{$0\le\rho(x,t)\le\bar\rho$ on $\R^3\times[0,T]$}.
\end{equation}
where $\Phi(t)$ and $H(t)$ are defined by \eqref{def of Phi}-\eqref{def of H} but with $(\rho,u,\phi)$ replaced by $(\rho^\eta,u^\eta,\phi^\eta)$.

We prove that the approximate solution $(\rho^\eta,u^\eta,\phi^\eta)$ satisfies the following uniform H\"older continuity estimates. It will be used for obtaining weak solutions via compactness arguments.

\begin{lemma}\label{holder continuity lemma}
Given $\tau\in(0,T]$, there is a constant $C=C(\tau)$ such that for all $\alpha\in(0,\frac{1}{2}]$ and $\eta>0$,
\begin{align}\label{Holder continuity in space}
\langle u^\eta(\cdot,t)\rangle^{\alpha}&\le M\Big[\Big(C_0+||\nabla u^\eta(\cdot,t)||^2_{L^2}\Big)^{\frac{1-2\alpha}{4}}\Big(||(\rho^\eta)^\frac{1}{2}\dot{u}^\eta(\cdot,t)||^2_{L^2}+||\rho^\frac{1}{2}\nabla\phi^\eta(\cdot,t)||^2_{L^2}\Big)^{\frac{1+2\alpha}{4}}\\
&\qquad+||\nabla u^\eta(\cdot,t)||_{L^2}^\frac{1-2\alpha}{2}||\nabla\omega^\eta(\cdot,t)||_{L^2}^\frac{1+2\alpha}{2} + C_0^{\frac{1-\alpha}{3}}\Big].\notag\\
&\le C(\tau)C_0^{\theta_T},\notag
\end{align}
\begin{align}\label{Holder continuity in space-time}
\langle u^\eta(\cdot,t)\rangle^{\frac{1}{2},\frac{1}{4}}_{\R^3\times [\tau,T]}\le C(\tau)C_0^{\theta_T}.
\end{align}
\end{lemma}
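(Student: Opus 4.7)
For the spatial estimate \eqref{Holder continuity in space}, I would combine the Morrey embedding \eqref{holder bound r} with the decomposition of $\nabla u^\eta$ via the effective viscous flux. First I apply \eqref{holder bound r} at $r=3/(1-\alpha)>3$ (so that $1-3/r=\alpha$), obtaining $\langle u^\eta(\cdot,t)\rangle^{\alpha}\le C(r)\|\nabla u^\eta(\cdot,t)\|_{L^r}$, and then use \eqref{Fourier multiplier on u} to split the right-hand side into the three pieces $\|F^\eta\|_{L^r}$, $\|\omega^\eta\|_{L^r}$, and $\|\rho^\eta-\tilde\rho\|_{L^r}$.

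For the first two, the Gagliardo--Nirenberg bound \eqref{sobolev bound r} at this particular $r$ yields exponents $(6-r)/(2r)=(1-2\alpha)/2$ on the $L^2$ factor and $(3r-6)/(2r)=(1+2\alpha)/2$ on the $L^2$ factor of the gradient, giving
\begin{equation*}
\|F^\eta\|_{L^r}\le C\|F^\eta\|_{L^2}^{(1-2\alpha)/2}\|\nabla F^\eta\|_{L^2}^{(1+2\alpha)/2},
\end{equation*}
and similarly for $\omega^\eta$. From the definition \eqref{def of effective viscous flux}, the pointwise density bound \eqref{bound on smooth rho}, and \eqref{L^2 estimate}, one has $\|F^\eta\|_{L^2}^2\le C(\|\nabla u^\eta\|_{L^2}^2+C_0)$; and \eqref{Fourier multiplier on F} at $r_2=2$ controls $\|\nabla F^\eta\|_{L^2}$ by $\|(\rho^\eta)^{1/2}\dot u^\eta\|_{L^2}+\|(\rho^\eta)^{1/2}\nabla\phi^\eta\|_{L^2}$, the two remaining terms being lower order thanks to $\|(\rho^\eta-\tilde\rho)^2\|_{L^2}^2\le\|\rho^\eta-\tilde\rho\|_{L^\infty}^2\,MC_0$. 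This reproduces the first summand on the right of \eqref{Holder continuity in space}; with $\|\omega^\eta\|_{L^2}\le C\|\nabla u^\eta\|_{L^2}$ the same interpolation gives the second summand. For the pressure piece I use the elementary Lebesgue interpolation
\begin{equation*}
\|\rho^\eta-\tilde\rho\|_{L^r}^r\le\|\rho^\eta-\tilde\rho\|_{L^\infty}^{r-2}\|\rho^\eta-\tilde\rho\|_{L^2}^2\le CC_0,
\end{equation*}
so that $\|\rho^\eta-\tilde\rho\|_{L^r}\le CC_0^{1/r}=CC_0^{(1-\alpha)/3}$, which is exactly the third summand. Specialising to $\alpha=\tfrac12$, $r=6$ and invoking the uniform bound \eqref{estimates on smooth approx solution} on $\Phi$ (whose $\sigma$-weights are bounded below by $\sigma(\tau)>0$ on $[\tau,T]$) yields the second inequality $\langle u^\eta(\cdot,t)\rangle^{\alpha}\le C(\tau)C_0^{\theta_T}$.

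For the joint estimate \eqref{Holder continuity in space-time}, I would triangulate
\begin{equation*}
|u^\eta(x_2,t_2)-u^\eta(x_1,t_1)|\le|u^\eta(x_2,t_2)-u^\eta(x_1,t_2)|+|u^\eta(x_1,t_2)-u^\eta(x_1,t_1)|,
\end{equation*}
control the first summand directly by the spatial $\tfrac12$-H\"older seminorm just obtained, and handle the pure time oscillation by the standard Hoff device of mollifying spatially at radius $r=|t_2-t_1|^{1/2}$. The mollification errors are of order $C(\tau)C_0^{\theta_T}r^{1/2}=C(\tau)C_0^{\theta_T}|t_2-t_1|^{1/4}$, while the time oscillation of the spatial average is controlled by writing $(\rho^\eta u^\eta)_t$ from the momentum equation \eqref{Navier Stokes Poisson}$_2$, testing against the mollifier, and applying Cauchy--Schwarz in time against the $L^2$-in-$t$ control on $\rho^{1/2}\dot u^\eta$ built into $\Phi$. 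The main technical obstacle I anticipate is the bookkeeping of the Poisson coupling $\rho^\eta\nabla\phi^\eta$ across the two scales; however, the uniform $L^\infty$ bound on $\nabla\phi^\eta$ from Lemma~\ref{auxiliary estimate on phi lemma} together with $\|\rho^\eta\|_{L^\infty}\le\bar\rho$ reduces it to a direct time integration that again yields the expected $|t_2-t_1|^{1/4}$ factor.
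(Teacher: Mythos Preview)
Your spatial estimate is essentially identical to the paper's: both apply the Morrey bound \eqref{holder bound r} at $r=3/(1-\alpha)$, split $\|\nabla u^\eta\|_{L^r}$ via \eqref{Fourier multiplier on u}, and interpolate $\|F^\eta\|_{L^r}$ and $\|\omega^\eta\|_{L^r}$ through \eqref{sobolev bound r}, handling the density piece by $\|\rho^\eta-\tilde\rho\|_{L^r}\le C C_0^{1/r}=C C_0^{(1-\alpha)/3}$.

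For the time-H\"older part you and the paper both average over a ball of radius $R=|t_2-t_1|^{1/2}$ and balance the spatial oscillation error $C(\tau)C_0^{\theta_T}R^{1/2}$ against the time increment of the average. The difference is in how that increment is controlled: the paper bounds it directly by
\[
R^{-3/2}\,|t_2-t_1|\,\sup_{\tau\le t\le T}\Big(\int_{\R^3}|u^\eta_t|^2\,dx\Big)^{1/2}
\]
and then appeals to \eqref{estimates on smooth approx solution}, whereas you go through the weak form of the momentum equation to control $(\rho^\eta u^\eta)_t$ against the mollifier. Your route is a little longer but more self-contained in the presence of vacuum, since $\Phi$ only furnishes a uniform bound on $\|(\rho^\eta)^{1/2}\dot u^\eta\|_{L^2}$ rather than on $\|u^\eta_t\|_{L^2}$ itself; the paper's shortcut is cleaner but implicitly relies on the smoothness of the approximating solutions to make sense of $\|u^\eta_t\|_{L^2}$.
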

\begin{proof} First we prove \eqref{Holder continuity in space}. Let $\alpha\in (0,\frac{1}{2}]$ and define $r\in (3,6]$ by $r=3/(1-\alpha)$. Then by \eqref{holder bound r} and \eqref{Fourier multiplier on u},
\begin{align}\label{holder estimate on u}
\langle u^\eta(\cdot,t)\rangle^{\alpha}&\le M\|u^\eta(\cdot,t)\|_{L^r}\notag\\
&\le M\left[||F^\eta(\cdot,t)||_{L^r}+||\omega^\eta(\cdot,t)||_{L^r}+||(\rho^\eta-\tilde\rho)(\cdot,t)||_{L^r}\right].
\end{align}
But by \eqref{sobolev bound r}, we also have
\begin{align*}
\|F^\eta(\cdot,t)\|_{L^r} &\le M\left(\|F^\eta(\cdot,t)\|_{L^2}^{(6-r)/2r}\|\nabla F^\eta(\cdot,t)\|_{L^2}^{(3r-6)/2r}\right)\\
&\le M\left(||(\rho^\eta-\tilde\rho)(\cdot,t)||^2_{L^2}+||\nabla u^\eta(\cdot,t)||^2_{L^2}\right)^\frac{1-2\alpha}{4}\\
&\qquad\times\left(||(\rho^\eta)^\frac{1}{2}\dot{u}^\eta(\cdot,t)||^2_{L^2}+||(\rho^\eta)^\frac{1}{2}\nabla\phi^\eta(\cdot,t)||^2_{L^2}\right)^\frac{1+2\alpha}{4}
\end{align*}
and
\begin{align*}
\|\omega^\eta(\cdot,t)\|_{L^r} &\le M\left(\|\omega^\eta(\cdot,t)\|_{L^2(\R^3)}^{(6-r)/2r}\|\nabla\omega^\eta(\cdot,t)\|_{L^2(\R^3)}^{(3r-6)/2r}\right)\\
&\le M\left(||\nabla u^\eta(\cdot,t)||_{L^2}^\frac{1-2\alpha}{2}||\nabla\omega^\eta(\cdot,t)||_{L^2}^\frac{1+2\alpha}{2}\right),
\end{align*}
where $F^\eta$ is defined in \eqref{def of effective viscous flux} with $u,\rho$ replaced by $u^\eta,\rho^\eta$ respectively. The result \eqref{Holder continuity in space} then follows by applying these bounds in \eqref{holder estimate on u} as well as the uniform estimates \eqref{estimates on smooth approx solution}.

Next we consider \eqref{Holder continuity in space-time}. First, the H\"older-$\frac{1}{2}$ continuity of $u^\eta$ in space was just proved in \eqref{Holder continuity in space} by taking $\alpha=\frac{1}{2}$, and to infer H\"older continuity in time we fix $x$ and $\tau\le t_1\le t_2\le T$ and compute
\begin{align}\label{5.1.02}
|u^\eta (x,t_2)-u^\eta (x,t_1)|&\le\frac{1}{|B_{R}(x)|}\int_{B_R(x)}\left |u^{\eta}(z,t_2)-u^{\eta}(z,t_1)\right |dz + C(\tau)C_0^{\theta_T}R^{\frac{1}{2}}\notag\\
&\le R^{-\frac{3}{2}}|t_2 - t_1|\sup_{\tau\le t\le T}\left( \int |u^{\eta}_t|^2dx\right)^{\frac{1}{2}} + C(\tau)C_0^{\theta_T}R^{\frac{1}{2}}\notag\\
&\le C(\tau)C_0^{\theta_T}\left[R^{-\frac{3}{2}}|t_2 - t_1|+R^{\frac{1}{2}}\right]
\end{align}
by the bounds in \eqref{estimates on smooth approx solution}. Taking $R=|t_2 - t_1|^\frac{1}{2}$ we then obtain the estimate in \eqref{Holder continuity in space-time} for $u^\eta$.
\end{proof}

In view of Lemma~\ref{holder continuity lemma}, the desired weak solution $(\rho,u,\phi)$ will then be obtained by taking $\eta\to 0$, with the use of the compactness provided by those bounds in \eqref{estimates on smooth approx solution}-\eqref{bound on smooth rho} and \eqref{Holder continuity in space-time}. It can be summarised in the following lemma:

\begin{lemma}\label{compactness arguments lemma}
There is a sequence $\eta_k\to 0$ and functions $u$, $\rho$ and $\phi$ such that as $k\to\infty$,
\begin{align}\label{strong convergence on compact set}
\mbox{ $u^{\eta_k}\rightarrow u$ uniformly on compact sets in $\R^3\times (0,T)$};\end{align}
\begin{align}\label{weak convergence in L2 space}
\nabla u^{\eta_k}(\cdot,t),\nabla\omega^{\eta_k}(\cdot,t)\rightharpoonup\nabla u(\cdot,t),\nabla\omega(\cdot,t)
\end{align}
weakly in $L^2(\R^3)$
for all $t\in(0,T]$; 
\begin{align}\label{weak convergence in L2 space-time}
&\mbox{$\sigma^{\frac{1}{2}}\dot{u}^{\eta_k},\sigma^{\frac{3}{2}}\nabla\dot{u}^{\eta_k}\rightharpoonup\sigma^{\frac{1}{2}}\dot{u},\sigma^{\frac{3}{2}}\nabla\dot{u}$}
\end{align}
weakly in $L^2(\R^3\times[0,T])$; and
\begin{align}\label{strong convergence L2loc}
\rho^{\eta_k}(\cdot,t),\,\Delta\phi^{\eta_k}\to \rho(\cdot,t),\,\Delta\phi(\cdot,t)
\end{align}
strongly in $L^2_{loc}(\R^3)$ for every $t\in[0,T]$.
\end{lemma}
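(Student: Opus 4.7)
The plan is to extract convergent subsequences from the smooth approximations $(\rho^\eta,u^\eta,\phi^\eta)$ by combining the uniform bounds from \eqref{estimates on smooth approx solution}--\eqref{bound on smooth rho} and the H\"older estimate from Lemma~\ref{holder continuity lemma} with classical compactness tools: Arzel\`a-Ascoli for the uniform convergence of $u^\eta$, weak $L^2$-compactness for the gradient and material-derivative terms, and an Aubin-Lions/characteristic argument for the density.

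\textbf{Step 1 (uniform convergence of $u^\eta$).} The H\"older bound $\langle u^\eta\rangle^{1/2,1/4}_{\R^3\times[\tau,T]}\le C(\tau)C_0^{\theta_T}$ furnishes equicontinuity of $\{u^\eta\}$ on every set $\R^3\times[\tau,T]$ with $\tau>0$. A uniform pointwise bound follows by interpolation: if $|u^\eta(x_0,t)|$ were very large, H\"older continuity would force $|u^\eta|$ to remain comparably large on a ball whose $L^q$-mass contradicts the bound of Lemma~\ref{Lq bound on u lemma}. Arzel\`a-Ascoli combined with a diagonal extraction over an exhaustion of $\R^3\times(0,T)$ by compact sets then produces $\eta_k\to 0$ with $u^{\eta_k}\to u$ uniformly on compacta.

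\textbf{Step 2 (weak convergences).} For every $t\in(0,T]$, the bound $\Phi(t)\le M_TC_0^{\theta_T}$ controls $\|\nabla u^\eta(\cdot,t)\|_{L^2}$ and $\|\nabla\omega^\eta(\cdot,t)\|_{L^2}$ with weights $\sigma^{-1/2}$ and $\sigma^{-3/2}$, as well as the $L^2(\R^3\times[0,T])$-norms of $\sigma^{1/2}(\rho^\eta)^{1/2}\dot u^\eta$ and $\sigma^{3/2}\nabla\dot u^\eta$. Weak sequential compactness in $L^2$ yields convergent subsequences; to obtain a single subsequence valid for every $t\in(0,T]$, I would diagonalize over a countable dense subset of times and use weak equicontinuity in $t$ (obtained from the uniform bounds on the appropriate time derivatives) to interpolate. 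The limiting object $\sigma^{1/2}\dot u$ is identified via $\sigma^{1/2}\dot u^\eta=(\rho^\eta)^{-1/2}\sigma^{1/2}(\rho^\eta)^{1/2}\dot u^\eta$ on $\{\rho^\eta>0\}$ together with the strong convergence of $\sqrt{\rho^\eta}$ established in Step~3.

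\textbf{Step 3 (strong convergence of $\rho^\eta$ and $\Delta\phi^\eta$).} This is the main obstacle. The mass equation $(\rho^\eta-\tilde\rho)_t=-\divv(\rho^\eta u^\eta)$, together with $\rho^\eta\in L^\infty$, the $L^2$-control of $\nabla u^\eta$, and the uniform convergence of $u^{\eta_k}$, shows that $(\rho^\eta-\tilde\rho)_t$ is uniformly bounded in $L^2(0,T;H^{-1}_\loc)$. Coupled with the uniform $L^\infty(0,T;L^2\cap L^\infty)$-bound on $\rho^\eta-\tilde\rho$, an Arzel\`a-Ascoli argument in $H^{-1}_\loc$ gives strong convergence in $C([0,T];H^{-1}_\loc)$. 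To upgrade this to $L^2_\loc$, I would follow Hoff \cite{hoff95,hoff06}: integrating the identity
\[(\mu+\lambda)\frac{d}{dt}\log\rho^\eta(x^\eta(t),t)+P(\rho^\eta)-P(\tilde\rho)=-F^\eta\]
along the particle trajectories $x^\eta(\cdot)$ of $u^\eta$, the uniform convergence of the characteristics (a Gronwall-type consequence of that of $u^{\eta_k}$) together with the weak $L^2$-convergence of $F^{\eta_k}$ (inherited from the weak convergence of $\nabla u^{\eta_k}$ and the $H^{-1}_\loc$-convergence of $\rho^{\eta_k}-\tilde\rho$) forces pointwise a.e.\ convergence of $\rho^{\eta_k}$, which the uniform $L^\infty$-bound upgrades to strong $L^2_\loc$-convergence via dominated convergence. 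Finally, the Poisson equation $\Delta\phi^{\eta_k}=\rho^{\eta_k}-\tilde\rho$ transfers the strong convergence directly to $\Delta\phi^{\eta_k}$.
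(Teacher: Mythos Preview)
Your Steps~1 and~2 match the paper's proof almost verbatim: the paper obtains \eqref{strong convergence on compact set} from the H\"older bound of Lemma~\ref{holder continuity lemma} via Arzel\`a--Ascoli and a diagonal process, and deduces \eqref{weak convergence in L2 space}--\eqref{weak convergence in L2 space-time} from the uniform bound \eqref{estimates on smooth approx solution} by weak-$L^2$ compactness.

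Step~3 is where you diverge from the paper, and where your argument has a genuine gap. The paper does \emph{not} use a trajectory argument; it simply cites the Lions--Feireisl machinery (Lions \cite{lions98}, pp.~21--23; Feireisl \cite{feireisl04}, pp.~63--64 and 118--127), whose core is the weak continuity of the effective viscous flux, namely that $\overline{F\,\rho}=\overline{F}\,\overline{\rho}$ (equivalently $\overline{P(\rho)\divv u}=\overline{P(\rho)}\,\divv u$), combined with the renormalized continuity equation to show $\overline{\rho\log\rho}=\overline{\rho}\log\overline{\rho}$ and hence strong convergence. Your proposed route via integrating $(\mu+\lambda)\frac{d}{dt}\log\rho^\eta+P(\rho^\eta)-P(\tilde\rho)=-F^\eta$ along characteristics breaks down at the step ``weak $L^2$-convergence of $F^{\eta_k}$ \dots\ forces pointwise a.e.\ convergence of $\rho^{\eta_k}$'': weak $L^2(\R^3\times[0,T])$ convergence of $F^{\eta_k}$ gives you nothing about the line integrals $\int_{t_0}^{t_1}F^{\eta_k}(x^{\eta_k}(s),s)\,ds$, since particle trajectories are null sets in space--time. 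To make a Lagrangian argument work you would need \emph{strong} convergence of $F^{\eta_k}$ in an appropriate norm (e.g.\ $L^1_t L^\infty_x$ on $[\tau,T]$), which is not supplied by the estimates you invoke. A secondary issue is that your ``Gronwall-type'' stability of characteristics is not literally Gronwall: the uniform modulus of continuity of $u^{\eta_k}$ is only H\"older-$\tfrac12$ in space, so one needs an Osgood-type comparison rather than Gronwall, and the argument degenerates as $\tau\downarrow 0$.

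In short: for Step~3 either follow the Lions--Feireisl argument the paper cites, or, if you want a Lagrangian proof, you must first upgrade the convergence of $F^{\eta_k}$ beyond weak $L^2$ (which is possible here using the $H^1$ bounds on $F^{\eta_k}(\cdot,t)$ for $t\ge\tau$ together with time-equicontinuity, but is a separate nontrivial step you have not carried out).
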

\begin{proof} 
The uniform convergence \eqref{strong convergence on compact set} follows from Lemma~\ref{holder continuity lemma} via a diagonal process, thus fixing the sequence $\{\eta_k\}$. The weak-convergence statements in \eqref{weak convergence in L2 space} and \eqref{weak convergence in L2 space-time} then follow from the bound \eqref{estimates on smooth approx solution} and considerations based on the equality of weak-$L^2$ derivatives and distribution derivatives. The convergence of approximate densities in \eqref{strong convergence L2loc} for a further subsequence can be achieved by applying the argument given in Lions \cite{lions98} pp. 21--23 and extended by Feireisl \cite{feireisl04}, pp. 63--64 and 118--127, and we omit the details here.
\end{proof}

\begin{proof}[{\bf Proof of Theorem~\ref{global existence theorem}:}]
In view of the bounds \eqref{estimates on smooth approx solution} and \eqref{Holder continuity in space-time} and the convergence results obtained from Lemma~\ref{compactness arguments lemma}, it is clear that the limiting functions $(\rho,u,\phi)$ of Lemma~\ref{compactness arguments lemma} inherit the bounds in \eqref{L2 of nabla u weak solution}-\eqref{enegry bound on weak solution}. It is also clear from the modes of convergence described in Lemma~\ref{compactness arguments lemma} that $(\rho,u,\phi)$ satisfies the weak forms \eqref{weak form of mass equation}-\eqref{weak form of poisson equation} of the system \eqref{Navier Stokes Poisson} as well as the initial condition \eqref{initial condition}. The continuity statement \eqref{1.4.1} then follows easily from these weak forms together with the bounds in \eqref{enegry bound on weak solution}. This completes the proof of Theorem~\ref{global existence theorem}.
\end{proof}

\section{Blow-up criteria and Proof of Theorem~\ref{blow up theorem}}\label{blow-up section}

In this section, we study the blow-up criterion for classical solutions to the system \eqref{Navier Stokes Poisson}-\eqref{initial condition} and prove Theorem~\ref{blow up theorem}. First we define the so-called maximal time of existence of smooth solutions to \eqref{Navier Stokes Poisson}-\eqref{initial condition}:
\begin{definition}\label{def of maximal time}
We call $T^*\in(0, \infty)$ to be the maximal time of existence of a smooth solution $(\rho-\tilde\rho,u,B)$ to \eqref{Navier Stokes Poisson}-\eqref{initial condition} if for any $0<T<T^*$, $(\rho-\tilde\rho,u,\phi)$ solves \eqref{Navier Stokes Poisson}-\eqref{initial condition} in $[0, T]\times\R^3$ and satisfies 
\begin{equation}\label{smoothness on rho}
\rho - \tilde{\rho}\in C^0([0,T];H^4(\R^3))\cap C^1 ([0,T];H^3(\R^3)),
\end{equation}
\begin{equation}\label{smoothness on u}
u\in C^0([0,T];H^4(\R^3))\cap C^1([0,T];H^2 (\R^3)),
\end{equation}
and
\begin{equation}\label{smoothness on phi}
\nabla\phi\in C^0([0,T];H^5(\R^3)).
\end{equation} 
Moreover, the conditions \eqref{smoothness on rho}-\eqref{smoothness on phi} fail to hold when $T=T^*$.
\end{definition}
We will prove Theorem~\ref{blow up theorem} using a contradiction argument. Specifically, for the sake of contradiction, we assume that
\begin{equation}\label{bound on rho assumption for contradiction}
||\rho||_{L^\infty((0,T^*)\times\R^3)}\le\bar{C}.
\end{equation} 
for some constant $\bar{C}>0$. Based on the assumption \eqref{bound on rho assumption for contradiction}, we derive {\em a priori} estimates for the local smooth solution $(\rho-\tilde\rho,u,\phi)$ on $[0,T]$ with $T\le T^*$. Those estimates are different from we did in Section~\ref{Energy Estimates section} and Section~\ref{pointwise bound density section} in the sense that there is no smallness assumption imposed on the initial data, hence a more delicate analysis is required in bounding the solution $(\rho-\tilde\rho,u,\phi)$.

To facilitate the proof, we introduce the following auxiliary functionals:
\begin{align}
\Phi_1(t)&=\sup_{0\le s\le t}\|\rho^\frac{1}{2}\dot{u}(\cdot,s)\|^2_{L^2}+\int_{0}^{t}\|\rho^\frac{1}{2}\dot{u}(\cdot,s)\|_{L^2}^2ds,\label{def of Phi 1}\\
\Phi_2(t)&=\sup_{0\le s\le t}\|\rho^\frac{1}{2}\dot{u}(\cdot,s)\|^2_{L^2}+\int_{0}^{t}\|\nabla\dot{u}(\cdot,s)\|^2_{L^2}ds,\label{def of Phi 2}\\
\Phi_3(t)&=\int_0^t\!\!\!\int_{\R^3}|\nabla u|^4dxds.\label{def of Phi 3}
\end{align}

We first recall the following lemma which gives estimates on the solutions of the Lam\'{e} operator $\mu\Delta+(\mu+\lambda)\nabla\divv$. More detailed discussions can also be found in Sun-Wang-Zhang \cite{swz11}.

\begin{lemma}\label{estimates on Lame operator}
Consider the following equation:
\begin{equation}\label{eqn for Lame operator}
\mu\Delta v+(\mu+\lambda)\nabla\divv(v)=J,
\end{equation}
where $v=(v^1,v^2,v^3)(x)$, $J=(J^1,J^2,J^3)(x)$ with $x\in\R^3$ and $\mu$, $\lambda>0$. Then for $r\in(1,\infty)$, we have:
\begin{enumerate} 
\item if $J\in W^{2,r}(\R^3)$, then $||\Delta v||_{L^r}\le \tilde C||J||_{L^r}$;
\item if $J=\nabla\varphi$ with $\varphi\in W^{2,r}(\R^3)$, then $||\nabla v||_{L^r}\le \tilde C||\varphi||_{L^r}$;
\item if $J=\nabla\divv(\varphi)$ with $\varphi\in W^{2,r}(\R^3)$, then $||v||_{L^r}\le \tilde C||\varphi||_{L^r}$.
\end{enumerate}
Here $\tilde C$ is a positive constant which depends only on $\mu$, $\lambda$ and $r$.
\end{lemma}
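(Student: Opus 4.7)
My plan is to reduce the elliptic system to scalar Poisson equations on the divergence and curl of $v$, and then recognize each estimate as an $L^r$--boundedness statement for a composition of Riesz transforms, which holds for $r\in(1,\infty)$ by the classical Calder\'on--Zygmund theory. The key algebraic observation is that taking the divergence of \eqref{eqn for Lame operator} gives
\begin{equation*}
(2\mu+\lambda)\Delta(\divv v)=\divv J,
\end{equation*}
so that $\divv v=(2\mu+\lambda)^{-1}\Delta^{-1}\divv J$. Substituting this back into \eqref{eqn for Lame operator} yields
\begin{equation*}
\mu\Delta v=J-\frac{\mu+\lambda}{2\mu+\lambda}\nabla\Delta^{-1}\divv J,
\end{equation*}
which expresses $\Delta v$ as a linear combination of $J$ and of Riesz-type singular integrals $R_iR_j J^j$ applied to $J$. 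Since the Riesz transforms are bounded on $L^r(\R^3)$ for every $r\in(1,\infty)$, part (i) follows immediately with a constant depending only on $\mu$, $\lambda$ and $r$.

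For parts (ii) and (iii) I would use the same identity but exploit the special structure of $J$ to cancel the operators $\Delta^{-1}\divv$ algebraically rather than as singular integrals. If $J=\nabla\varphi$, then $\Delta^{-1}\divv J=\varphi$ and the identity above collapses to
\begin{equation*}
\mu\Delta v=\Big(1-\tfrac{\mu+\lambda}{2\mu+\lambda}\Big)\nabla\varphi=\tfrac{\mu}{2\mu+\lambda}\nabla\varphi,
\end{equation*}
so that $v=(2\mu+\lambda)^{-1}\Delta^{-1}\nabla\varphi$ and $\nabla v=(2\mu+\lambda)^{-1}\nabla\otimes\nabla\Delta^{-1}\varphi$, whose components are Riesz transforms of $\varphi$. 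Similarly, if $J=\nabla\divv\varphi$ then $\Delta^{-1}\divv J=\divv\varphi$ and the same manipulation gives $v=(2\mu+\lambda)^{-1}\Delta^{-1}\nabla\divv\varphi$, whose components are double Riesz transforms $R_iR_j\varphi^j$ of $\varphi$. Applying the $L^r$-boundedness of the Riesz transforms in each case yields (ii) and (iii).

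The only real subtlety is justifying that the formal inverse $\Delta^{-1}$ is meaningful at the level of $L^r$ estimates: the intermediate quantities $\Delta^{-1}\divv J$, $\Delta^{-1}\nabla\varphi$, etc.\ are defined only up to harmonic polynomials and need not themselves lie in $L^r$, but the \emph{derivative combinations} appearing in the final estimates (namely $\Delta v$, $\nabla v$, and $v$ respectively) are genuine Calder\'on--Zygmund operators of convolution type with kernels $R_iR_j$, well defined on $L^r$ for $1<r<\infty$. A clean way to present this is to derive the identities first for Schwartz $v$ and $J$ on the Fourier side (where the symbol of the Lam\'e operator is $-\mu|\xi|^2 I-(\mu+\lambda)\xi\otimes\xi$ and can be inverted explicitly), record the resulting representation formulas for $\Delta v$, $\nabla v$ and $v$ as Riesz-type multipliers acting on $J$ or $\varphi$, and then extend by density. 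I do not expect any serious obstacle, as this is essentially the classical Calder\'on--Zygmund theory applied to the Lam\'e system, parallel to the treatment in Sun--Wang--Zhang \cite{swz11}.
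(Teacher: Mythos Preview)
Your argument is correct and is precisely the standard Fourier-analytic route via Riesz transforms; the paper itself omits the proof entirely and simply cites \cite{swz11}, whose treatment is the one you have reproduced. There is nothing to add.
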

\begin{proof}
A proof can be found in \cite{swz11} pg. 39 and we omit the details here.
\end{proof}

Given $\tilde\rho>0$ and initial data $(\rho_0-\tilde\rho,u_0,\nabla\phi_0)\in H^4(\R^3)$, we define
\begin{equation}
S_0=\|(\rho_0-\tilde\rho,u_0,\nabla\phi_0)\|_{H^4}^2.
\end{equation}
We begin to estimate the functionals $\Phi_1$, $\Phi_2$ and $\Phi_3$ under the assumption \eqref{bound on rho assumption for contradiction}. Similar to the previous cases, $M$ will denote a generic positive constant which further depends on $\bar{C}$, $\tilde C$, $T^*$ and $S_0$. 

We first have the following bounds based on the results we obtained in Section~\ref{Energy Estimates section}, namely for any $0\le t\le T\le T^*$ and $r>3$,
\begin{align}
\sup_{0\le s\le t}\intox\Big(\rho|u|^2+|\rho-\tilde\rho|^2+|\nabla\phi|^2\Big)+\intoxt|\nabla u|^2\le M,\label{L^2 estimate blow up}\\
\sup_{0\le s\le t}\intox(|u|^4+|B|^4)\le M,\label{L^4 estimate blow up}\\
\sup_{0\le s\le t}\intox|\nabla\phi_t(x,t)|^2dx\le M\sup_{0\le s\le t}\intox\rho|u|^2(x,s)dx,\label{auxiliary estimate on phi 1 blow up}\\
\sup_{0\le s\le t}\|\nabla\phi(\cdot,s)\|_{L^\infty}\le C(r)\Big(S_0+S_0^\frac{1}{r}\Big),\label{auxiliary estimate on phi 2 blow up}\\
\sup_{0\le s\le t}\|\Delta\phi(\cdot,s)\|_{L^\infty}\le M\sup_{0\le s\le t}\|(\rho-\tilde\rho)(\cdot,s)\|_{L^\infty}.\label{auxiliary estimate on phi 3 blow up}
\end{align}
Next we are going to estimate $\Phi_1$ which is given in the following lemma:
\begin{lemma}\label{bound on Phi 1 lemma}
For any $0\le t\le T\le T^*$,
\begin{equation}\label{bound on Phi 1}
\Phi_1(t)\le M[1+\Phi_3(t)].
\end{equation}
\end{lemma}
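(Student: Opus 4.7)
The proof follows the standard Hoff-type argument: apply the material derivative $\dot{\,\,} = \dt + u\cdot\nabla$ to the momentum equation \eqref{Navier Stokes Poisson}$_2$ and test against $\dot u$. Starting from $\rho\dot u^j = -(P-\tilde P)_{x_j} + \mu\Delta u^j + \lambda(\divv u)_{x_j} + \rho\phi_{x_j}$ and using the continuity equation to track the density, one obtains schematically
$$\tfrac{1}{2}\tfrac{d}{dt}\int_{\R^3}\rho|\dot u|^2\,dx = \int_{\R^3}\dot u^j\bigl[(\dt+u\cdot\nabla)(\mu\Delta u^j+\lambda(\divv u)_{x_j}-(P-\tilde P)_{x_j}+\rho\phi_{x_j})\bigr]dx.$$
After integration by parts, the viscous contributions yield the good dissipation $\mu\int|\nabla\dot u|^2 + (\mu+\lambda)\int(\divv\dot u)^2$ with the correct sign, together with cubic remainders of schematic form $(\nabla u)^2\nabla\dot u$ and $(\nabla u)^3\dot u$.

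Under the isothermal hypothesis $P=K\rho$, the pressure contributions reduce after one further integration by parts (using \eqref{Navier Stokes Poisson}$_1$) to terms controlled by $\varepsilon\int|\nabla\dot u|^2 + C_\varepsilon\int|\nabla u|^2$ via Cauchy--Schwarz and the density bound \eqref{bound on rho assumption for contradiction}. The cubic remainders are treated by the pointwise inequality $|(\nabla u)^3\dot u|\le \varepsilon|\nabla\dot u|^2 + C_\varepsilon|\nabla u|^4$, producing the term $\Phi_3(t)$. Choosing $\varepsilon$ small enough absorbs every $|\nabla\dot u|^2$-remainder into the left-hand dissipation, leading schematically to
$$\sup_{0\le s\le t}\int_{\R^3}\rho|\dot u|^2\,dx + \int_0^t\!\!\!\int_{\R^3}|\nabla\dot u|^2\,dx\,ds \le M\bigl(1 + \Phi_3(t) + (\text{Poisson terms})\bigr).$$

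The main obstacle is controlling the material derivative of the Poisson force $\rho\phi_{x_j}$, since \emph{no} smallness on $C_0$ is available in this section; every $\phi$-estimate must close purely in terms of $S_0$, $\bar C$, $T^*$. Using \eqref{Navier Stokes Poisson}$_1$,
$$(\dt+u\cdot\nabla)(\rho\phi_{x_j}) = -\rho(\divv u)\phi_{x_j} + \rho\phi_{x_j t} + \rho u\cdot\nabla\phi_{x_j},$$
and I would pair this with $\dot u$ and estimate each factor through \eqref{auxiliary estimate on phi 1 blow up}--\eqref{auxiliary estimate on phi 3 blow up}: $\|\nabla\phi_t\|_{L^2}$ by $\|\rho^{1/2}u\|_{L^2}\le M$; $\|\nabla\phi\|_{L^\infty}$ by $C(r)(S_0+S_0^{1/r})$; and $\|\Delta\phi\|_{L^\infty}$ by $\|\rho-\tilde\rho\|_{L^\infty}\le M$. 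After Cauchy--Schwarz the $\phi$-contributions amount to at most $M\int_0^t\|\rho^{1/2}\dot u(\cdot,s)\|_{L^2}^2\,ds + M$, and a Gronwall application, together with the $H^4$-regularity-derived bound $\|\rho_0^{1/2}\dot u(\cdot,0)\|_{L^2}^2\le M$, yields $\sup_{0\le s\le t}\|\rho^{1/2}\dot u(\cdot,s)\|_{L^2}^2\le M[1+\Phi_3(t)]$. Finally, since $\int_0^t\|\rho^{1/2}\dot u(\cdot,s)\|_{L^2}^2\,ds \le T^*\sup_{0\le s\le t}\|\rho^{1/2}\dot u(\cdot,s)\|_{L^2}^2$, the bound \eqref{bound on Phi 1} on $\Phi_1(t)$ follows.
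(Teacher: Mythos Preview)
Your argument is correct but takes a genuinely different route from the paper. You apply the material derivative to the momentum equation and then test against $\dot u$, which is the $H^2$-level estimate: it produces the dissipation $\int|\nabla\dot u|^2$ and yields control of $\sup_s\|\rho^{1/2}\dot u\|_{L^2}^2$, from which you then recover the time integral. The paper instead uses the simpler $H^1$-level estimate: it multiplies the momentum equation directly by $\dot u$ (not by its material derivative), obtaining
\[
\sup_{0\le s\le t}\int_{\R^3}|\nabla u|^2 + \int_0^t\!\!\!\int_{\R^3}\rho|\dot u|^2 \;\le\; \Big|\int_0^t\!\!\!\int_{\R^3}\rho\dot u\cdot\nabla\phi\Big| + M\Big(S_0 + \int_0^t\!\!\!\int_{\R^3}|\nabla u|^3\Big),
\]
and then bounds the cubic term by H\"older, $\int\!\!\int|\nabla u|^3\le(\int\!\!\int|\nabla u|^2)^{1/2}(\int\!\!\int|\nabla u|^4)^{1/2}\le M\Phi_3^{1/2}$, while the Poisson term is handled in one line by Cauchy--Schwarz and \eqref{L^2 estimate blow up}. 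What you have written is essentially the paper's later argument for $\Phi_2$ (Lemma~\ref{bound on Phi 2 lemma}); it works, and in fact proves the stronger statement $\Phi_2(t)\le M[1+\Phi_3(t)]$ directly, but at the cost of the more delicate Poisson bookkeeping and a Gronwall step that the paper's $\Phi_1$ proof avoids entirely. The paper's route is shorter here precisely because it postpones the material-derivative computation to the $\Phi_2$ lemma, where it is actually needed.
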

\begin{proof}
Following the proof of Lemma~\ref{H^1 and H^2 estimate lemma}, we have
\begin{align}\label{H^1 estimate blow up}
\sup_{0\le s\le t}\intox|\nabla u|^2+\intoxt\rho|\dot{u}|^2\le \Big|\intoxt\rho\dot{u}\cdot\nabla\phi\Big|+M\Big(S_0+\intoxt|\nabla u|^3\Big).
\end{align}
Using \eqref{bound on rho assumption for contradiction} and \eqref{L^2 estimate blow up}, the first term on the right side of \eqref{H^1 estimate blow up} can be estimated as follows.
\[\begin{aligned}
\Big|\intoxt\rho\dot{u}\cdot\nabla\phi\Big|&\le \bar{C}\Big(\intoxt\rho|\dot{u}|^2\Big)^\frac{1}{2}\Big(\intoxt|\nabla\phi|^2\Big)^\frac{1}{2}\\
&\le\bar{C}\Phi_1^\frac{1}{2}T^\frac{1}{2}(MS_0)^\frac{1}{2}\le M\Phi_1^\frac{1}{2},
\end{aligned}\]
and the second term can be bounded by
\[\begin{aligned}
\intoxt|\nabla u|^3&\le \Big(\intoxt|\nabla u|^2\Big)^\frac{1}{2}\Big(\intoxt|\nabla u|^4\Big)^\frac{1}{2}\\
&\le M\Phi_3^\frac{1}{2}.
\end{aligned}\]
Applying the above bounds on \eqref{H^1 estimate blow up}, the result follows.
\end{proof}
Before we estimate $\Phi_2$, we introduce the following decomposition on $u$. We write
\begin{equation}\label{decomposition on u}
u=u_p+u_s,
\end{equation}
where $u_p$ and $u_s$ satisfy
\begin{align}\label{def of u_p and u_s} 
\left\{ \begin{array}{l}
\mu\Delta(u_p)+(\mu+\lambda)\nabla\divv(u_p)=\nabla(P-\tilde P),\\
\rho(u_s)_t-\mu\Delta u_s-(\mu+\lambda)\nabla\divv(u_s)=-\rho u\cdot\nabla u-\rho(u_p)_t+\rho\nabla\phi.
\end{array}\right.
\end{align}

Using Lemma~\ref{estimates on Lame operator}, the term $u_p$ can be bounded by
\begin{equation}\label{estimate on u_p}
\intox|\nabla u_p|^r\le M\intox|P-\tilde P|^r\le M\intox|\rho-\tilde\rho|^r.
\end{equation}

We give the estimates for $u_s$ as follows.
\begin{lemma}\label{estimate on u_s lemma}
For any $0\le t\le T\le T^*$, we have
\begin{equation}\label{estimate on u_s}
\sup_{0\le \tau\le t}\intox|\nabla u_s|^2+\intoxt\rho|\dt(u_s)|^2+\intoxt|\Delta u_s|^2\le M.
\end{equation}
\end{lemma}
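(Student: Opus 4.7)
My plan is a standard parabolic energy argument applied to the $u_s$ equation, coupled with an elliptic regularity bound obtained by treating \eqref{def of u_p and u_s}$_2$ as a stationary Lamé problem for $u_s$. The key preliminary observation is that in the isothermal case $P=K\rho$, differentiating the defining equation for $u_p$ in time gives $\mu\Delta(u_p)_t+(\mu+\lambda)\nabla\divv(u_p)_t=-K\nabla\divv(\rho u)$, so Lemma~\ref{estimates on Lame operator}(3) yields $\|(u_p)_t\|_{L^2}\le M\|\rho u\|_{L^2}\le M$ and hence $\int_0^t\|\rho(u_p)_t\|_{L^2}^2\,ds\le M$, and similarly from \eqref{estimate on u_p} we have $\|\nabla u_p\|_{L^4}\le M$ uniformly in time using \eqref{L^4 estimate blow up} and $\|\rho-\tilde\rho\|_{L^\infty}\le M$.

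First I would multiply \eqref{def of u_p and u_s}$_2$ by $(u_s)_t$, integrate by parts in space, and then in time from $0$ to $t$, producing the identity
\[
\int_0^t\!\!\!\int_{\R^3}\rho|(u_s)_t|^2\,dx\,ds+\frac{\mu}{2}\|\nabla u_s(\cdot,t)\|_{L^2}^2+\frac{\mu+\lambda}{2}\|\divv u_s(\cdot,t)\|_{L^2}^2
\]
equals the initial energy plus the three source integrals $-\int_0^t\!\!\int\rho u\!\cdot\!\nabla u\!\cdot\!(u_s)_t$, $-\int_0^t\!\!\int\rho(u_p)_t\!\cdot\!(u_s)_t$, and $\int_0^t\!\!\int\rho\nabla\phi\!\cdot\!(u_s)_t$. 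Each source is handled by Cauchy--Schwarz, absorbing one-sixth of $\int_0^t\!\!\int\rho|(u_s)_t|^2$. The terms involving $(u_p)_t$ and $\nabla\phi$ are immediately bounded by $M$ using the preliminary observation, \eqref{auxiliary estimate on phi 2 blow up} and $\rho\le\bar C$. The $\rho u\cdot\nabla u$ term is the delicate one: writing $\nabla u=\nabla u_p+\nabla u_s$, the contribution of $u_p$ is bounded by $M\int_0^t\|u\|_{L^4}^2\|\nabla u_p\|_{L^4}^2\,ds\le M$, while for the $u_s$ piece I apply Gagliardo--Nirenberg $\|\nabla u_s\|_{L^4}^2\le C\|\nabla u_s\|_{L^2}^{1/2}\|\Delta u_s\|_{L^2}^{3/2}$ together with $\|u\|_{L^4}\le M$ to obtain
\[
\int_0^t\!\!\!\int_{\R^3}\rho|u|^2|\nabla u_s|^2\,dx\,ds\le \varepsilon\int_0^t\|\Delta u_s\|_{L^2}^2\,ds+M_\varepsilon\int_0^t\|\nabla u_s\|_{L^2}^2\,ds.
\]

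Next I would invoke Lemma~\ref{estimates on Lame operator}(1) on \eqref{def of u_p and u_s}$_2$, viewed as $\mu\Delta u_s+(\mu+\lambda)\nabla\divv u_s=J$ with right-hand side $J=\rho(u_s)_t+\rho u\cdot\nabla u+\rho(u_p)_t-\rho\nabla\phi$, giving
\[
\int_0^t\|\Delta u_s\|_{L^2}^2\,ds\le M\int_0^t\!\!\!\int_{\R^3}\rho|(u_s)_t|^2\,dx\,ds+M\int_0^t\!\!\!\int_{\R^3}|u|^2|\nabla u|^2\,dx\,ds+M,
\]
to which I apply the same Gagliardo--Nirenberg bound, absorbing a small multiple of $\int_0^t\|\Delta u_s\|_{L^2}^2$ on the left. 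Combining this with the energy identity and choosing $\varepsilon$ small enough to absorb $\varepsilon\int_0^t\|\Delta u_s\|_{L^2}^2$ there as well yields
\[
\sup_{0\le s\le t}\|\nabla u_s(\cdot,s)\|_{L^2}^2+\int_0^t\!\!\!\int_{\R^3}\rho|(u_s)_t|^2\,dx\,ds\le M+M\int_0^t\|\nabla u_s(\cdot,s)\|_{L^2}^2\,ds,
\]
and Gronwall's inequality closes the estimate. Feeding this back into the elliptic bound gives control of $\int_0^t\|\Delta u_s\|_{L^2}^2\,ds$ by $M$, which together with the initial bound $\|\nabla u_s(\cdot,0)\|_{L^2}\le\|\nabla u_0\|_{L^2}+\|\nabla u_p(\cdot,0)\|_{L^2}\le M$ completes \eqref{estimate on u_s}.

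The principal obstacle is the cubic-in-gradient term $\int\rho|u|^2|\nabla u|^2$: without the splitting $u=u_p+u_s$ and the Gagliardo--Nirenberg interpolation, it cannot be absorbed into the dissipation. The argument hinges on the fact that $u_p$ has essentially no time regularity issues (it is controlled pointwise in $t$ by $\rho-\tilde\rho$ alone), while the bad part $u_s$ contributes to the highest-order term through the pairing $\|\nabla u_s\|_{L^2}^{1/2}\|\Delta u_s\|_{L^2}^{3/2}$, which Young's inequality converts into an absorbable $\varepsilon\|\Delta u_s\|_{L^2}^2$ plus a Gronwall-type $\|\nabla u_s\|_{L^2}^2$.
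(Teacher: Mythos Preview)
Your proposal is correct and follows essentially the same approach as the paper: multiply the $u_s$-equation by $(u_s)_t$, control $(u_p)_t$ via Lemma~\ref{estimates on Lame operator}(3) applied to the time-differentiated $u_p$-equation, split $\nabla u=\nabla u_p+\nabla u_s$ in the convective term with Gagliardo--Nirenberg interpolation on the $u_s$ piece, pair this with the elliptic bound from Lemma~\ref{estimates on Lame operator}(1), and close by absorption and Gronwall. Your handling of the term $\int_0^t\!\!\int_{\R^3}|u|^2|\nabla u|^2$ inside the elliptic estimate (again splitting and reabsorbing a small multiple of $\int_0^t\|\Delta u_s\|_{L^2}^2$) is in fact more explicit than the paper's presentation, which passes over this step rather quickly.
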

\begin{proof}
We multiply \eqref{def of u_p and u_s}$_2$ by $\dt(u_s)$ and integrate to obtain
\begin{align}\label{estimate on u_s step 1}
&\intox\mu|\nabla u_s|^2\Big|_0^t+\intoxt(\mu+\lambda)|\divv u_s|^2+\intoxt\rho|\dt(u_s)|^2\\
&=-\intoxt\Big(\rho u\cdot\nabla u\Big)\cdot\dt(u_s)-\intoxt\Big(\rho\dt(u_p)\Big)\cdot\dt(u_s)+\intoxt\rho\nabla\phi\cdot\dt(u_s).\notag
\end{align}
We estimate the right side of \eqref{estimate on u_s step 1} term by term. Using \eqref{L^4 estimate blow up} and \eqref{estimate on u_p}, the first integral can be bounded by
\begin{align*}
&\Big(\intoxt|u|^2|\nabla u|^2\Big)^\frac{1}{2}\Big(\intoxt\rho|\dt(u_s)|^2\Big)^\frac{1}{2}\\
&\le M\Big[\int_0^t\Big(\intox|u|^4\Big)^\frac{1}{2}\Big(\intox|\nabla u_s|^4+\intox|\nabla u_p|^4\Big)^\frac{1}{2}\Big]^\frac{1}{2}\Big(\intoxt\rho|\dt(u_s)|^2\Big)^\frac{1}{2}\\
&\le M\Big[\int_0^t\Big(\intox|\nabla u_s|^2\Big)^\frac{1}{4}\Big(\intox|\Delta u_s|^2\Big)^\frac{3}{4}+\int_0^t\Big(\intox|\rho-\tilde\rho|^4\Big)^\frac{1}{2}\Big]^\frac{1}{2}\\
&\qquad\qquad\qquad\times\Big(\intoxt\rho|\dt(u_s)|^2\Big)^\frac{1}{2}\\
&\le M\Big(\intoxt\rho|\dt(u_s)|^2\Big)^\frac{1}{2}\Big[\Big(\intoxt|\nabla u_s|^2\Big)^\frac{1}{8}\Big(\intoxt|\Delta u_s|^2\Big)^\frac{3}{8}+1\Big].
\end{align*}
Next to estimate $-\intoxt\Big(\rho\dt(u_p)\Big)\cdot\dt(u_s)$, we differentiate \eqref{def of u_p and u_s}$_1$ with respect to $t$ and obtain
\begin{equation*}
\mu\Delta\dt(u_p)+(\mu+\lambda)\nabla\divv\dt(u_p)=\nabla\divv(-P\cdot u).
\end{equation*}
Using Lemma~\ref{estimates on Lame operator} and \eqref{L^2 estimate blow up}, we have
\begin{align}\label{estimate on dt u_p}
\intoxt|\dt(u_p)|^2\le M\intoxt|P\cdot u|^2\le M.
\end{align}
Therefore
\begin{align*}
-\intoxt\Big(\rho\dt(u_p)\Big)\cdot\dt(u_s)&\le\Big(\intoxt\rho|\dt(u_s)|^2\Big)^\frac{1}{2}\Big(\intoxt|\dt(u_p)|^2\Big)^\frac{1}{2}\\
&\le M\Big(\intoxt\rho|\dt(u_s)|^2\Big)^\frac{1}{2}.
\end{align*}
To estimate $\intoxt\rho\nabla\phi\cdot\dt(u_s)$, using \eqref{L^2 estimate blow up}, we readily have
\begin{align*}
\intoxt\rho\nabla\phi\cdot\dt(u_s)&\le \Big(\intoxt\rho|\nabla\phi|^2\Big)^\frac{1}{2}\Big(\intoxt\rho|\dt(u_s)|^2\Big)^\frac{1}{2}\\
&\le M\Big(\intoxt\rho|\dt(u_s)|^2\Big)^\frac{1}{2}.
\end{align*}
Combining the above, we have from \eqref{estimate on u_s step 1} that 
\begin{align}\label{estimate on u_s step 2}
&\intox|\nabla u_s|^2(x,t)dx+\intoxt|\divv(u_s)|^2+\intoxt\rho|\dt(u_s)|^2\notag\\
&\le M\Big(\intoxt|\nabla u_s|^2\Big)^\frac{1}{4}\Big(\intoxt|\Delta u_s|^2\Big)^\frac{3}{4}+M.
\end{align}
It remains to estimate the term $\intoxt|\Delta u_s|^2$. Rearranging the terms in \eqref{def of u_p and u_s}$_2$, we have that
\begin{equation*}
\mu\Delta u_s+(\mu+\lambda)\nabla\divv(u_s)=\rho\dt(u_s)+\rho u\cdot\nabla u+\rho\dt(u_p)-\rho\nabla\phi.
\end{equation*}
Therefore, we can apply Lemma~\ref{estimates on Lame operator} to get
\begin{align*}
&\intoxt|\Delta u_s|^2\\
&\le M\Big[\intoxt(|\rho\dt(u_s)|^2+|\rho u\cdot\nabla u|^2+|\rho\dt(u_p)|^2+|\rho\nabla\phi|^2)\Big]. 
\end{align*}
Using \eqref{L^2 estimate blow up} and \eqref{estimate on dt u_p}, we obtain
\begin{equation*}
\intoxt(|\rho\dt(u_s)|^2+|\rho u\cdot\nabla u|^2+|\rho\dt(u_p)|^2+|\rho\nabla\phi|^2)\le M\Big(\intoxt\rho|\dt(u_s)|^2+1\Big),
\end{equation*}
and hence
\begin{equation}\label{estimate on Delta u_s}
\intoxt|\Delta u_s|^2\le M\Big(\intoxt\rho|\dt(u_s)|^2+1\Big).
\end{equation}
Applying the estimate \eqref{estimate on Delta u_s} on \eqref{estimate on u_s step 2} and using Gr\"{o}wall's inequality, we conclude that for $0\le t\le T\le T^*$,
\begin{equation*}
\intox|\nabla u_s|^2(x,t)dx\le M,
\end{equation*}
and the result \eqref{estimate on u_s} follows.
\end{proof}

We are now ready to estimate $\Phi_2$ as defined in \eqref{def of Phi 2}. The result is given in the following lemma.

\begin{lemma}\label{bound on Phi 2 lemma}
For any $0\le t\le T\le T^*$,
\begin{equation}\label{bound on Phi 2}
\Phi_2(t)\le M[\Phi_1(t)+\Phi_3(t)+1].
\end{equation}
\end{lemma}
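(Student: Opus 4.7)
The plan is to apply the material derivative $\dot{(\cdot)}=\partial_t+u\cdot\nabla$ to the momentum equation \eqref{Navier Stokes Poisson}$_2$, rewritten as $\rho\dot u^j=-P_{x_j}+\mu\Delta u^j+\lambda(\divv u)_{x_j}+\rho\phi_{x_j}$, and then multiply by $\dot u^j$ and integrate. Using the mass equation together with Hoff's identity $(\rho f)_t+\divv(\rho fu)=\rho\dot f$, the inertial terms assemble into $\frac{1}{2}\frac{d}{dt}\|\rho^{1/2}\dot u\|_{L^2}^2$. After integration by parts, the viscous contributions produce the coercive quantity $\mu\|\nabla\dot u\|_{L^2}^2+(\mu+\lambda)\|\divv\dot u\|_{L^2}^2$ together with error terms proportional to $\int|\nabla u|\,|\nabla\dot u|\,|\nabla u|\,dx$, which are absorbed into $\varepsilon\|\nabla\dot u\|_{L^2}^2+M\int|\nabla u|^4\,dx$ via Cauchy-Schwarz. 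Integrated in time the latter contributes $M\Phi_3(t)$.

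The pressure terms after integration by parts become $\int(\divv\dot u)\dot P\,dx$ plus lower-order commutators; since $\dot P=-\gamma P\divv u$ (or more generally $\dot P+\rho P'(\rho)\divv u=0$ from \eqref{Navier Stokes Poisson}$_1$), these are dominated by $\varepsilon\|\nabla\dot u\|_{L^2}^2+M\int|\nabla u|^2\,dx$, whose time integral is bounded by \eqref{L^2 estimate blow up}. The remaining commutators involve products like $P\nabla u\nabla\dot u$ and are treated similarly. All the ``pure compressible Navier-Stokes'' parts of the computation reproduce the estimate carried out in \cite{hoff95, swz11}, and yield a bound of the form
\begin{equation*}
\tfrac{d}{dt}\|\rho^{1/2}\dot u\|_{L^2}^2+\mu\|\nabla\dot u\|_{L^2}^2\le M\Big(\|\nabla u\|_{L^4}^4+\|\rho^{1/2}\dot u\|_{L^2}^2+1\Big)+R_\phi,
\end{equation*}
where $R_\phi$ collects the new terms produced by the Poisson forcing $\rho\phi_{x_j}$.

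The hard part, and the only genuinely new ingredient, is the Poisson contribution $R_\phi=\int\dot u^j\bigl[(\rho\phi_{x_j})_t+u\cdot\nabla(\rho\phi_{x_j})\bigr]dx$. Expanding and using $\rho_t=-\divv(\rho u)$, I split $R_\phi$ into three pieces: (i) $\int\dot u^j\rho\phi_{tx_j}$, (ii) $\int\dot u^j u\cdot\nabla(\rho\phi_{x_j})$, and (iii) $\int\dot u^j(\rho u\cdot\nabla\phi_{x_j}-\divv(\rho u)\phi_{x_j})$ type cross terms. For (i) I bound $\|\nabla\phi_t\|_{L^2}$ by $\|\rho^{1/2}u\|_{L^2}$ via \eqref{auxiliary estimate on phi 1 blow up} so that $|{\rm (i)}|\le M\|\rho^{1/2}\dot u\|_{L^2}\cdot M^{1/2}\le \varepsilon\Phi_1+M$. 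For (ii) I integrate by parts to move one derivative onto $\dot u$, obtaining a product of $|\nabla\dot u|$ with $|u||\nabla\phi|$ or $|u||\Delta\phi|$; the $L^\infty$ bounds \eqref{auxiliary estimate on phi 2 blow up}-\eqref{auxiliary estimate on phi 3 blow up} together with \eqref{L^2 estimate blow up} give a bound of the form $\varepsilon\|\nabla\dot u\|_{L^2}^2+M(1+\|\rho^{1/2}\dot u\|_{L^2}^2)$. The cross terms in (iii) are handled analogously.

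Collecting everything, choosing $\varepsilon$ small to absorb $\varepsilon\|\nabla\dot u\|_{L^2}^2$ into the viscous term, and integrating on $[0,t]$ yields
\begin{equation*}
\sup_{0\le s\le t}\|\rho^{1/2}\dot u(\cdot,s)\|_{L^2}^2+\int_0^t\|\nabla\dot u(\cdot,s)\|_{L^2}^2\,ds\le M\Big(\Phi_1(t)+\Phi_3(t)+1\Big)+\|\rho_0^{1/2}\dot u_0\|_{L^2}^2,
\end{equation*}
and the initial quantity is absorbed into the $S_0$-dependent constant $M$. This gives exactly \eqref{bound on Phi 2}. The main obstacle is the careful bookkeeping of the Poisson term $R_\phi$; once the auxiliary $\phi$-bounds in Lemma~\ref{auxiliary estimate on phi lemma} are deployed it reduces to a standard Cauchy-Schwarz absorption argument, and the rest is a direct extension of the Navier-Stokes computation in \cite{swz11}.
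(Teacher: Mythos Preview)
Your proposal is correct and follows essentially the same route as the paper: both apply the convective derivative to the momentum equation (as in Lemma~\ref{H^1 and H^2 estimate lemma}, now without the $\sigma$-weights), reduce the Navier--Stokes part to the standard Hoff/Sun--Wang--Zhang computation yielding the $\Phi_1+\Phi_3$ contribution, and then isolate and estimate the Poisson remainder $\int\dot u\cdot\partial_t(\rho\nabla\phi)+\int\dot u\cdot(u\cdot\nabla(\rho\nabla\phi))$ using the auxiliary bounds \eqref{auxiliary estimate on phi 1 blow up}--\eqref{auxiliary estimate on phi 3 blow up} together with Cauchy--Schwarz absorption. Your splitting of $R_\phi$ into pieces (i)--(iii) is just a slightly finer bookkeeping of the same two integrals the paper treats, and your differential-inequality formulation is equivalent to the paper's time-integrated version.
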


\begin{proof}
Following the proof of Lemma~\ref{H^1 and H^2 estimate lemma}, we have
\begin{align}\label{bound on Phi 2 step 1}
&\intox|\dot{u}|^2+\intoxt|\nabla\dot{u}|^2\notag\\
&\le M\Big[S_0+\Big|\intoxt\dot{u}\cdot\frac{\partial}{\dt}(\rho\nabla\phi)\Big|+\Big|\intoxt\dot{u}\cdot(u\nabla(\rho\nabla\phi))\Big|+\Phi_1+\Phi_3\Big].
\end{align}
Using the estimates \eqref{L^2 estimate blow up}-\eqref{auxiliary estimate on phi 3 blow up}, we readily have the bound
\begin{align*}
&\Big|\intoxt\dot{u}\cdot\frac{\partial}{\dt}(\rho\nabla\phi)\Big|\\
&\le M\intoxt\sigma^3\rho|\nabla\dot{u}||u||\nabla\phi|+M\intoxt\sigma^3\rho|\dot{u}||u||\Delta\phi|+M\intoxt\sigma^3\rho|\dot{u}||\nabla\phi_t|\\
&\le M\Big[\Big(\intoxt|\nabla\dot{u}|^2\Big)^\frac{1}{2}+\intoxt\rho|\dot{u}|^2\Big)^\frac{1}{2}\Big]\le M[\Phi_1^\frac{1}{2}+\Phi_2^\frac{1}{2}],
\end{align*}
as well as
\begin{align*}
&\Big|\intoxt\dot{u}\cdot(u\nabla(\rho\nabla\phi))\Big|\\
&\le M\Big[\intoxt\rho|\dot{u}||\nabla u||\nabla\phi|+\intoxt\rho|\nabla\dot{u}||u||\nabla\phi|\Big]\\
&\le M\Big[\Big(\intoxt|\nabla\dot{u}|^2\Big)^\frac{1}{2}+\intoxt\rho|\dot{u}|^2\Big)^\frac{1}{2}\Big]\le M[\Phi_1^\frac{1}{2}+\Phi_2^\frac{1}{2}].
\end{align*}
Therefore by Cauchy's inequality, the result \eqref{bound on Phi 2} follows.
\end{proof}

We finally obtain the bound on $\Phi_3$ in terms of $\Phi_2$.

\begin{lemma}\label{bound on Phi 3 lemma}
For any $0\le t\le T\le T^*$,
\begin{equation}\label{bound on Phi 3}
\Phi_3(t)\le M[\Phi_1(t)^\frac{1}{2}+1].
\end{equation}
\end{lemma}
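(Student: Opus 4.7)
The plan is to exploit the Helmholtz-type decomposition $u=u_p+u_s$ from \eqref{decomposition on u}--\eqref{def of u_p and u_s}: $u_p$ carries all the pressure forcing and is controlled directly by the density, while $u_s$ inherits enough elliptic regularity to feed a Gagliardo--Nirenberg estimate that reduces $\Phi_3$ to a cubic integral of $\|\Delta u_s\|_{L^2}$.

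First I would handle the pressure part $u_p$. Applying Lemma~\ref{estimates on Lame operator}(2) with $r=4$ gives $\|\nabla u_p(\cdot,s)\|_{L^4}\le M\|P-\tilde P\|_{L^4}\le M\|\rho-\tilde\rho\|_{L^4}$ (using $P=K\rho$); the pointwise bound $\rho\le\bar C$ combined with \eqref{L^2 estimate blow up} then yields $\|\rho-\tilde\rho\|_{L^4}^4\le\|\rho-\tilde\rho\|_{L^\infty}^2\|\rho-\tilde\rho\|_{L^2}^2\le M$, so $\intoxt|\nabla u_p|^4\le M$ uniformly in $t\in[0,T]$.

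For $u_s$, I would combine the two equations in \eqref{def of u_p and u_s} into the clean elliptic identity
\begin{equation*}
\mu\Delta u_s+(\mu+\lambda)\nabla\divv(u_s)=\rho\dot u-\rho\nabla\phi,
\end{equation*}
and then apply Lemma~\ref{estimates on Lame operator}(1) with $r=2$, together with $\rho\le\bar C$ and $\|\nabla\phi\|_{L^2}\le M$ from \eqref{L^2 estimate blow up}, to conclude
\begin{equation*}
\|\Delta u_s(\cdot,s)\|_{L^2}\le M\bigl(\|\rho^{1/2}\dot u(\cdot,s)\|_{L^2}+1\bigr).
\end{equation*}
The three-dimensional Gagliardo--Nirenberg inequality $\|\nabla w\|_{L^4}^4\le C\|\nabla w\|_{L^2}\|\nabla^2 w\|_{L^2}^3$ applied to $w=u_s$, combined with the uniform bound $\sup_{s}\|\nabla u_s(\cdot,s)\|_{L^2}\le M$ from Lemma~\ref{estimate on u_s lemma}, then gives $\intoxt|\nabla u_s|^4\le M\int_0^t\|\Delta u_s(\cdot,s)\|_{L^2}^3\,ds$.

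The decisive step (where the exponent $\tfrac12$ in \eqref{bound on Phi 3} is born) is the asymmetric splitting
\begin{equation*}
\int_0^t\|\Delta u_s\|_{L^2}^3\,ds\le\Bigl(\sup_{0\le s\le t}\|\Delta u_s(\cdot,s)\|_{L^2}\Bigr)\int_0^t\|\Delta u_s(\cdot,s)\|_{L^2}^2\,ds,
\end{equation*}
after which the supremum is bounded by $M(\Phi_1(t)^{1/2}+1)$ via the elliptic estimate and the definition \eqref{def of Phi 1} of $\Phi_1$, while the $L^2_tL^2_x$-factor is $\le M$ by Lemma~\ref{estimate on u_s lemma}. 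Assembling the $u_p$ and $u_s$ contributions yields $\Phi_3(t)\le M[\Phi_1(t)^{1/2}+1]$. The main obstacle is precisely this asymmetric splitting: any symmetric interpolation of the three $\|\Delta u_s\|_{L^2}$ factors would produce $\Phi_1^{3/2}$ and destroy the subsequent bootstrap with Lemmas~\ref{bound on Phi 1 lemma}--\ref{bound on Phi 2 lemma}.
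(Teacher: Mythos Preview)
Your proposal is correct and follows essentially the same route as the paper: both use the decomposition $u=u_p+u_s$, bound the $u_p$ contribution directly via Lemma~\ref{estimates on Lame operator} and \eqref{estimate on u_p}, apply the Gagliardo--Nirenberg inequality $\|\nabla u_s\|_{L^4}^4\le C\|\nabla u_s\|_{L^2}\|\Delta u_s\|_{L^2}^3$ together with the bound $\sup_s\|\nabla u_s\|_{L^2}\le M$ from Lemma~\ref{estimate on u_s lemma}, and then perform the asymmetric splitting $\int_0^t\|\Delta u_s\|_{L^2}^3\,ds\le(\sup_s\|\Delta u_s\|_{L^2})\int_0^t\|\Delta u_s\|_{L^2}^2\,ds$ with the elliptic identity $\mu\Delta u_s+(\mu+\lambda)\nabla\divv(u_s)=\rho\dot u-\rho\nabla\phi$ controlling the supremum factor by $\Phi_1^{1/2}+1$. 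The only cosmetic difference is that the paper records the intermediate bound as $M(\Phi_2+1)$ before stating the conclusion in terms of $\Phi_1^{1/2}$, which is harmless since $\sup_s\|\rho^{1/2}\dot u\|_{L^2}^2$ appears in both functionals.
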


\begin{proof}
Using the decomposition \eqref{decomposition on u} on $u$ and the estimates \eqref{estimate on u_p} and \eqref{estimate on u_s}, we have
\begin{align*}
\Phi_3&\le \intoxt|\nabla u_s|^4+\intoxt|\nabla u_p|^4\\
&\le M\int_0^t\Big(\intox|\nabla u_s|^2\Big)^\frac{1}{2}\Big(\intox|\Delta u_s|^2\Big)^\frac{3}{2}+\intoxt|\rho-\tilde\rho|^4\\
&\le M\Big[\Big(\sup_{0\le s\le t}\intox|\Delta u_s|^2\Big)^\frac{1}{2}+1\Big].
\end{align*}
To estimate $\intox|\Delta u_s|^2$, we rearrange the terms in \eqref{def of u_p and u_s}$_2$ to obtain
\[\mu\Delta u_s+(\mu+\lambda)\nabla\divv(u_s)=\rho\dot{u}-\rho\nabla\phi.\]
Therefore Lemma~\ref{estimates on Lame operator} implies that
\begin{align*}
\intox|\Delta u_s|^2\le M\Big[\intox(|\rho\dot{u}|^2+|\rho\nabla\phi|^2)\Big]\le M(\Phi_2+1),
\end{align*}
and the result follows.
\end{proof}

\begin{proof}[Proof of Theorem~\ref{blow up theorem}]
In view of the bounds \eqref{bound on Phi 1}, \eqref{bound on Phi 2} and \eqref{bound on Phi 3}, we can conclude that for $0\le t\le T\le T^*$,
\begin{equation}\label{bound on Phi 1 Phi 2 Phi 3}
\Phi_1(t)+\Phi_2(t)+\Phi_3(t)\le M.
\end{equation}
Together with the pointwise boundedness assumption \eqref{bound on rho assumption for contradiction} on $\rho$ and apply the similar argument given in \cite{suen13a}, we can show that for $0\le t\le T\le T^*$, 
\begin{align}\label{H^4 bounds blow up}
\sup_{0\le s\le t}||(\rho-\tilde{\rho},u,\nabla\phi)(\cdot,s)||_{H^4} + \int_{0}^{t}||u(\cdot,s)||^2_{H^4}ds\le M'',
\end{align}
for some $M''$ which depends on $C_0$, $\bar{C}$, $T^*$ and the system parameters $P$, $\mu$, $\lambda$ and $K$. An open-and-closed argument on the time interval can then be applied which shows that the local solution $(\rho-\tilde\rho,u,\phi)$ can be extended beyond $T^*$, which contradicts the maximality of $T^*$. Therefore the assumption \eqref{bound on rho assumption for contradiction} does not hold and this completes the proof of Theorem~\ref{blow up theorem}.
\end{proof}



\end{document}